\theoremstyle{plain}
\newtheorem{thm}{Theorem}[section]
\newtheorem{prop}[thm]{Proposition}
\newtheorem{lem}[thm]{Lemma}
\newtheorem{cor}[thm]{Corollary}
\theoremstyle{definition}
\newtheorem{defn}{Definition}
\theoremstyle{remark}
\def\cc{{\curvearrowright}}
\def\ce{{\check e}}
\def\F{{\mathbb F}}
\def\Ft{{\mathbb F_2}}
\def\Fk{{\mathbb F_{T}}}
\def\Fkz{{\mathbb F^z_{T}}}
\def\FF{{\mathbb F}}
\def\cG{{\mathcal G}}
\def\len{{\textrm{len}}}
\def\cL{{\mathcal L}}
\def\N{{\mathbb N}}
\def\cN{{\mathcal N}}
\def\rng{{\textrm{rng}}}
\def\cS{{\mathcal S}}
\def\supp{{\textrm{supp}}}
\def\supp{{\textrm{supp}}}
\def\chix{{\raise.5ex\hbox{$\chi$}}}
\def\Z{{\mathbb Z}}
\begin{document}
\title{Stable orbit equivalence of \\Bernoulli shifts over free groups}
\author{Lewis Bowen$^*$}
\thanks{ Supported in part by NSF grant DMS-0901835.}
\begin{abstract}
Previous work showed that every pair of nontrivial Bernoulli shifts over a fixed free group are orbit equivalent. In this paper, we prove that if $G_1,G_2$ are nonabelian free groups of finite rank then every nontrivial Bernoulli shift over $G_1$ is stably orbit equivalent to every nontrivial Bernoulli shift over $G_2$. This answers a question of S. Popa.
\end{abstract}
\maketitle
\noindent
{\bf Keywords}: stable orbit equivalence, Bernoulli shifts, free groups.\\
{\bf MSC}:37A20\\

\noindent
\tableofcontents


\section{Introduction}

Let $G$ be a countable group and $(X,\mu)$ a standard probability space. A probability measure-preserving (p.m.p.) action of $G$ on $(X,\mu)$ is a collection $\{T_g\}_{g\in G}$ of measure-preserving transformations $T_g:X \to X$ such that $T_{g_1}T_{g_2}=T_{g_1g_2}$ for all $g_1,g_2 \in G$. We denote this by $G \curvearrowright^T (X,\mu)$.

Suppose $G_1\cc^{T} (X_1,\mu_1)$ and $G_2\cc^{S} (X_2,\mu_2)$ are two  p.m.p. actions. A measurable map $\phi:X_1' \to X_2'$ (where $X'_i \subset X_i$ is conull) is an {\em orbit equivalence} if the push-forward measure $\phi_*\mu_1$ equals $\mu_2$ and for every $x\in X_1'$, $\{T_gx:~g\in G_1\}=\{S_g\phi(x):~g\in G_2\}$. If there exists such a map, then the actions $T$ and $S$ are said to be {\em orbit equivalent} (OE).

If, in addition, there is a group isomorphism $\Psi:G_1 \to G_2$ such that $\phi(T_g x) = S_{\Psi(g)} \phi(x)$ for every $x\in X'_1$ and $g\in G_1$ then the actions $T$ and $S$ are said to be {\em measurably-conjugate}.

If $A \subset X$ is a set of positive $\mu$-measure then let $\mu(\cdot |A)$ denote the probability measure on $A$ defined by $\mu(E|A) = \frac{\mu(E\cap A)}{\mu(A)}$.  Two p.m.p. actions $G_1 \cc^T (X_1,\mu_1)$ and $G_2 \cc^S (X_2,\mu_2)$ are {\em stably orbit equivalent} (SOE) if there exist positive measure sets $A_i \subset X_i$ and a map $\phi:A_1\to A_2$ inducing a measure-space isomorphism between $(A_1,\mu_1(\cdot|A_1))$ and $(A_2,\mu_2(\cdot|A_2))$ such that for a.e. $x\in A_1$, $\{T_gx:g\in G_1\} \cap A_1= \{S_g\phi(x):g\in G_2\} \cap A_2$.

The initial motivation for orbit equivalence comes from the study of von Neumann algebras. It is known that two p.m.p. actions are orbit equivalent if and only if their associated crossed product von Neumann algebras are isomorphic by an isomorphism that preserves the Cartan subalgebras [Si55].  H. Dye [Dy59, Dy63]  proved the pioneering result that any two ergodic p.m.p. actions of the group of integers on the unit interval are OE. This was extended to amenable groups in [OW80] and [CFW81]. By contrast, it is now known that every nonamenable group admits a continuum of non-orbit equivalent ergodic p.m.p. actions [Ep09]. This followed a series of earlier results that dealt with various important classes of non-amenable groups ([GP05], [Hj05], [Ioxx], [Ki08], [MS06], [Po06]). 

In the last decade, a number of striking OE rigidity results have been proven (for surveys, see [Fu09], [Po07] and [Sh05]). These imply that, under special conditions, OE implies measure-conjugacy. By contrast, the main theorem of this paper could be called an OE ``flexibility'' result. This theorem and those of the related paper [Bo09b] are apparently the first flexibility results in the nonamenable setting.

The new result concerns a special class of dynamical systems called Bernoulli shifts. To define them, let $(K,\kappa)$ be a standard probability space. If $G$ is a countable discrete group, then $K^G$ is the set of all of functions $x:G \to K$ with the product Borel structure. For each $g\in G$, let $S_g:K^G \to K^G$ be the shift-map defined by $S_gx(h):=x(g^{-1}h)$ for any $h\in G$ and $x\in K^G$. This map preserves the product measure $\kappa^G$. The action $G \cc^S (K^G,\kappa^G)$ is called the {\em Bernoulli shift over $G$ with base-space $(K,\kappa)$}. To avoid trivialities, we will assume that $\kappa$ is not supported on a single point.

If $\kappa$ is supported on a finite or countable set $K'\subset K$ then the {\em entropy} of $(K,\kappa)$ is defined by
$$H(K,\kappa) := -\sum_{k \in K'} \kappa(\{k\}) \log\big( \kappa(\{k\})\big).$$
If $\kappa$ is not supported on any countable set then $H(K,\kappa):=+\infty$.

A. N. Kolmogorov proved that if two Bernoulli shifts $\Z \cc (K^\Z,\kappa^\Z)$ and $\Z \cc (L^\Z,\lambda^\Z)$ are measurably-conjugate then the base-space entropies $H(K,\kappa)$ and $H(L,\lambda)$ are equal [Ko58, Ko59]. This answered a question of von Neumann which had been posed at least 20 years prior. The converse to Kolmogorov's theorem was famously proven by D. Ornstein [Or70ab]. Both results were extended to countable infinite amenable groups in [OW87].  

A group $G$ is said to be {\em Ornstein} if whenever $(K,\kappa), (L,\lambda)$ are standard probability spaces with $H(K,\kappa)=H(L,\lambda)$ then the corresponding Bernoulli shifts $G \cc (K^G,\kappa^G)$ and $G \cc (L^G,\lambda^G)$ are measurable conjugate. A. M. Stepin proved that if $G$ contains an Ornstein subgroup, then $G$ is Ornstein [St75]. Therefore, any group $G$ that contains an infinite amenable subgroup is Ornstein. It is not known whether every countably infinite group is Ornstein.

In [Bo09a], I proved that every sofic group satisfies a Kolmogorov-type theorem. Precisely, if $G$ is sofic, $(K,\kappa), (L,\lambda)$ are standard probability spaces with $H(K,\kappa) + H(L,\lambda)<\infty$ and the associated Bernoulli shifts $G \cc (K^G,\kappa^G)$, $G \cc (L^G,\lambda^G)$ are measurably-conjugate then $H(K,\kappa)=H(L,\lambda)$. If $G$ is also Ornstein then the finiteness condition on the entropies can be removed. Sofic groups were defined implicitly by M. Gromov [Gr99] and explicitly by B. Weiss [We00]. For example, every countably infinite linear group is sofic and Ornstein. It is not known whether or not all countable groups are sofic. 

In summary, it is known that for a large class of groups (e.g., all countable linear groups), Bernoulli shifts are completely classified up to measure-conjugacy by base-space entropy. Let us now turn to the question of orbit equivalence.

By aforementioned results of [OW80] and [CFW81], it follows that if $G_1$ and $G_2$ are any two infinite amenable groups then any two nontrivial Bernoulli shifts $G_1 \cc (K^{G_1},\kappa^{G_1})$, $G_2 \cc (L^{G_2},\lambda^{G_2})$ are orbit equivalent. By contrast, it was shown in [Bo09a] that the main result of [Bo09a] combined with rigidity results of S. Popa [Po06, Po08] and Y. Kida [Ki08] proves that for many nonamenable groups $G$, Bernoulli shifts are classified up to orbit equivalence and even stable orbit equivalence by base-space entropy. For example, this includes PSL$_n(\Z)$ for $n>2$, mapping class groups of surfaces (with a few exceptions) and any nonamenable sofic Ornstein group of the form $G=H\times N$ with both $H$ and $N$ countably infinite that has no nontrivial finite normal subgroups.

In [Bo09b] it was shown that if $\FF_r$ denotes the free group of rank $r$ then every pair of nontrivial Bernoulli shifts over $\FF_r$ are OE. By [Ga00], the cost of a Bernoulli shift action of $\FF_r$ equals $r$. Since cost is invariant under OE, it follows that no Bernoulli shift over $\FF_r$ can be OE to a Bernoulli shift over $\FF_s$ if $r \ne s$. Moreover, since SOE preserves cost 1 and cost $\infty$, it follows that no Bernoulli shift over $\FF_1=\Z$ can be SOE to a Bernoulli shift over $\FF_r$ for $r>1$ and no Bernoulli shift over $\FF_\infty$ can be SOE to a Bernoulli shift over $\FF_r$ for finite $r$. The main result of this paper is:

\begin{thm}\label{thm:main}
If $1<r,s < \infty$ then all Bernoulli shift actions over $\FF_r$ and $\FF_s$ are stably orbit equivalent.
\end{thm}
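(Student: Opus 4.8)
The plan is to reduce Theorem~\ref{thm:main} to the already-known case $r=s$ together with a single ``change of rank'' stable orbit equivalence. Concretely, it suffices to produce, for some base-space $(K,\kappa)$ and some rank $r>1$, a stable orbit equivalence between the Bernoulli shift $\FF_r \cc (K^{\FF_r},\kappa^{\FF_r})$ and the Bernoulli shift $\FF_s \cc (L^{\FF_s},\lambda^{\FF_s})$ for every $s$ with $1<s<\infty$. Indeed, once we have that, given arbitrary Bernoulli shifts over $\FF_r$ and $\FF_s$ we first apply the main theorem of [Bo09b] to connect each of them by an orbit equivalence to a Bernoulli shift with a convenient base (say, two equal-measure atoms), then chain these with the change-of-rank SOE's through a fixed intermediate rank; transitivity of SOE closes the argument. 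So the real content is: \emph{all finite nonabelian free groups have a common Bernoulli SOE-class.}

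The natural mechanism to change the rank is to pass to a finite-index subgroup. If $H \le G$ has index $n$, then the restriction of a $G$-action to $H$ is stably orbit equivalent (via the index-$n$ inclusion) to the original action — one takes $A_1 = X_1$ and realizes $X_1$ as $n$ disjoint copies inside the $G$-orbit equivalence relation, or more precisely uses that the $H$-orbit relation on $X$ has index $n$ in the $G$-orbit relation. Now if $G=\FF_r$ and $H$ is a subgroup of index $n$, then $H \cong \FF_{m}$ with $m = n(r-1)+1$ by the Nielsen--Schreier formula. The crucial point is that the restriction to $H$ of a Bernoulli shift $\FF_r \cc (K^{\FF_r},\kappa^{\FF_r})$ is \emph{again a Bernoulli shift} over $H \cong \FF_m$: as an $H$-set, $\FF_r$ is a disjoint union of $n$ copies of $H$ (Schreier transversal), so $K^{\FF_r} \cong (K^n)^{H}$ with the product measure $(\kappa^n)^{H}$. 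Hence, for any $r>1$ and any $n \ge 1$,
\begin{equation*}
\FF_r \cc (K^{\FF_r},\kappa^{\FF_r}) \quad \text{is SOE to} \quad \FF_{n(r-1)+1} \cc \big((K^n)^{\FF_{n(r-1)+1}}, (\kappa^n)^{\FF_{n(r-1)+1}}\big).
\end{equation*}
Since the numbers of the form $n(r-1)+1$ as $n$ ranges over positive integers include arbitrarily large integers, and since by [Bo09b] the choice of base-space is irrelevant up to OE, we obtain: for every $r>1$, the Bernoulli-SOE-class of $\FF_r$ contains the Bernoulli-SOE-class of $\FF_{n(r-1)+1}$ for all $n$.

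It remains to see that this suffices to put \emph{all} $\FF_s$, $1<s<\infty$, in one class. Fix $s,t$ with $1<s,t<\infty$. Choose $n$ with $n(s-1)+1 \ge t$ and set $m=n(s-1)+1$; by the above, $\FF_s \sim_{\mathrm{SOE}} \FF_m$ (with appropriate Bernoulli bases, hence by [Bo09b] with all Bernoulli bases). Similarly choose $n'$ with $n'(t-1)+1 = m'$ for some $m' \ge m$, so $\FF_t \sim_{\mathrm{SOE}} \FF_{m'}$. Now I need to connect $\FF_m$ and $\FF_{m'}$; applying the construction once more, pick a common value $m''$ reachable from both — e.g.\ note $n(s-1)+1$ takes the value $m$ when $n=(m-1)/(s-1)$ only if divisibility holds, so instead argue by a back-and-forth / cofinality argument: the set of ranks SOE-equivalent (via Bernoulli shifts) to $\FF_s$ is closed under $k \mapsto n(k-1)+1$, and any two such sets, both containing all sufficiently large multiples-shifted integers, must intersect. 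Making this last combinatorial step clean is the main obstacle; the cleanest route is to show that for every $r>1$ and every integer $N\ge 2$, $\FF_N$ lies in the Bernoulli-SOE-class of $\FF_r$ (take $n$ with $n(r-1)+1 = N$... which requires $(N-1)/(r-1)\in\Z$) — this fails in general, so one genuinely needs the transitivity chain: $\FF_r \to \FF_{n(r-1)+1}$, then from $\FF_{n(r-1)+1}$ step again, and use that $\gcd$-type reasoning lets the reachable ranks from any two starting points eventually coincide. Since all these equivalences are SOE's between Bernoulli shifts and [Bo09b] frees us from tracking base-spaces, transitivity of SOE then yields the theorem.
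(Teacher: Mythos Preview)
Your approach has a genuine gap at the crucial step. You assert that if $H\le G$ has finite index then the restricted $H$-action on $X$ is stably orbit equivalent to the original $G$-action on $X$, but this is neither standard nor elementary for non\-amenable $G$, and the justification you sketch (``$n$ disjoint copies'', ``index-$n$ subrelation'') does not survive scrutiny. What one \emph{can} easily prove is that the $H$-orbit relation on $X$ coincides with the restriction to $\{eH\}\times X$ of the $G$-orbit relation on $G/H\times X$ (diagonal action); hence $R_H^X$ is SOE to $R_G^{G/H\times X}$. But $G\curvearrowright G/H\times X$ is a \emph{different} free ergodic $\FF_r$-action from $G\curvearrowright X$, and for non\-amenable $G$ there are uncountably many non-OE free ergodic actions, so passing from $R_G^{G/H\times X}$ back to $R_G^{X}$ is exactly the missing work. (Note for instance that your ``$n$-fold amplification'' idea cannot give the restriction: the amplification $R_G\times I_n$ has cost $(r-1)/n+1$, whereas $R_H$ has cost $n(r-1)+1$.) For Bernoulli $X$, showing that the finite extension $G/H\times K^G$ is OE to $K^G$ is essentially the substance of the theorem you are trying to prove, so the argument is close to circular.

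The paper does not attempt your route. It instead exhibits an explicit intermediate system $\FF_2\curvearrowright K^{\FF_2/\langle b\rangle}\times K^{\FF_2}$ and proves by direct combinatorial ``rewiring'' constructions on the Cayley tree that (i) this system is orbit equivalent to the Bernoulli shift $\FF_2\curvearrowright K^{\FF_2}$ (Theorem~\ref{thm:1}), and (ii) it is SOE to a Bernoulli shift over $\FF_{|K|+1}$ (Theorem~\ref{thm:2}). Varying $|K|$ and invoking [Bo09b] then connects all ranks.

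A minor remark: even granting your main claim, your closing combinatorics is over\-complicated. From $\FF_2$, an index-$n$ subgroup has rank $n(2-1)+1=n+1$, so one step already reaches every rank $\ge 2$; transitivity of SOE through $\FF_2$ then finishes immediately, with no need for the $\gcd$/cofinality discussion.
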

 
 \begin{cor}
 Let $A_1,\ldots,A_r$ and $A'_1,\ldots, A'_s$ be countably infinite amenable groups with $1<s,r<\infty$. Let $\Gamma_1=A_1*\cdots *A_r$ and $\Gamma_2=A'_1*\cdots *A'_s$. Then every Bernoulli shift over $\Gamma_1$ is stably orbit equivalent to every Bernoulli shift over $\Gamma_2$.
 \end{cor}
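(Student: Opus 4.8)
The plan is to deduce the Corollary from Theorem~\ref{thm:main} by reducing Bernoulli shifts over the free products $\Gamma_1 = A_1 * \cdots * A_r$ and $\Gamma_2 = A'_1 * \cdots * A'_s$ to Bernoulli shifts over the free groups $\FF_r$ and $\FF_s$. The key observation is that a Bernoulli shift over a free product, when restricted to the orbit equivalence relation, ``looks like'' a Bernoulli shift over the corresponding free group at the level of SOE, because each free factor $A_i$ is amenable and acts on a Bernoulli shift in a way that can be absorbed.

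First I would recall the following standard fact (essentially due to the treatment of amenable equivalence subrelations, cf.\ [OW80], [CFW81]): if $A$ is a countably infinite amenable group acting on a standard probability space $(Y,\nu)$ by a free p.m.p.\ action, then the orbit equivalence relation $\mathcal{R}_A$ generated by this action is hyperfinite, and in particular any free p.m.p.\ action of $\Z$ generates the ``same'' equivalence relation up to isomorphism on a space of the same measure. More precisely, I want to use this to show that a Bernoulli shift $\Gamma_1 \cc (K^{\Gamma_1}, \kappa^{\Gamma_1})$ is orbit equivalent to a Bernoulli shift $\FF_r \cc (L^{\FF_r}, \lambda^{\FF_r})$ for a suitable base $(L,\lambda)$ (here I would pick $\FF_r$ with free generators indexed by the factors $A_1,\dots,A_r$ and, on each factor, replace the amenable $A_i$-action by a $\Z$-action with the same orbits via [OW80]/[CFW81], which is possible since the Bernoulli $A_i$-action restricted to the $A_i$-orbits is free, ergodic and amenable). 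One must be slightly careful that this replacement can be done simultaneously and measurably over the whole space and that it changes $(K^{\Gamma_1},\kappa^{\Gamma_1})$ into a genuine Bernoulli shift over $\FF_r$ rather than some skew product; the cleanest route is probably to observe that $(K^{\Gamma_1}, \kappa^{\Gamma_1})$, viewed as an $\FF_r$-system after a coamenable identification, is itself a Bernoulli-type system (a relatively Bernoulli extension) and invoke the coinduction / the fact that amenable extensions of Bernoulli shifts over free groups are again orbit equivalent to Bernoulli shifts, as established in [Bo09b].

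Having reduced matters to the statement ``every Bernoulli shift over $\Gamma_i$ is OE (hence SOE) to some Bernoulli shift over $\FF_{r}$ (resp.\ $\FF_s$)'', the Corollary follows immediately: by Theorem~\ref{thm:main}, every Bernoulli shift over $\FF_r$ is SOE to every Bernoulli shift over $\FF_s$ (since $1 < r, s < \infty$), and SOE is an equivalence relation, so every Bernoulli shift over $\Gamma_1$ is SOE to every Bernoulli shift over $\Gamma_2$.

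The main obstacle is the reduction step: making precise and rigorous the claim that a Bernoulli shift over the free product $A_1 * \cdots * A_r$ is orbit equivalent to a Bernoulli shift over $\FF_r$. The subtlety is that the Ornstein--Weiss replacement of each amenable factor action by a $\Z$-action is only canonical up to orbit equivalence on each factor's orbit equivalence subrelation, and one must check that gluing these along the free-product structure produces exactly the orbit equivalence relation of an $\FF_r$-action \emph{and} that the resulting $\FF_r$-action is Bernoulli (or at least OE to a Bernoulli shift, which by [Bo09b] suffices since all Bernoulli shifts over $\FF_r$ are mutually OE). I expect this to follow from combining the amenable OE uniqueness theorem with the techniques of [Bo09b], but it is the one place where genuine work beyond a formal deduction is needed.
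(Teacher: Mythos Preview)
Your approach is correct and matches the paper's: reduce to Bernoulli shifts over $\FF_r$ and $\FF_s$, then apply Theorem~\ref{thm:main} and transitivity of SOE. The reduction step you flag as ``the main obstacle'' is precisely the main result of [Bo09b], which the paper simply cites as a black box---so there is no need for the Ornstein--Weiss/CFW argument you sketch; [Bo09b] already proves directly that every Bernoulli shift over $\Gamma_i$ is OE to every Bernoulli shift over $\FF_r$ (resp.\ $\FF_s$).
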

 \begin{proof}
 From the main result of [Bo09b] it follows that every Bernoulli shift over $\Gamma_1$ is OE to every Bernoulli shift over $\F_r$. Similarly,  every Bernoulli shift over $\Gamma_2$ is OE to every Bernoulli shift over $\F_s$. The result now follows from the theorem above.  \end{proof}

\subsection{Large-scale structure of the proof}

Theorem \ref{thm:main} follows immediately from the two theorems below which will be proven in subsequent sections. To explain them, we need some notation. Let $K$ be a finite or countable set. Then the rank 2 free group $\Ft=\langle a,b\rangle$ acts on $(K\times K)^\Ft$ in the usual way: $(g\cdot x)(f):=x(g^{-1}f)$ for any $g,f\in \Ft$ and $x\in (K\times K)^\Ft$. We call this the {\em shift-action}. Let $\langle b \rangle$ be the cyclic subgroup of $\Ft$ generated by the element  $b$. Define $\Phi: K^{\Ft/\langle b \rangle} \times K^\Ft \to (K\times K)^\Ft$ by 
$$\Phi(x,y)(g)=\Big(x\big(g\langle b \rangle\big), y(g)\Big) ~\forall x \in K^{\Ft/\langle b \rangle}, y\in K^\Ft, g\in \Ft.$$
Observe that $\Phi$ is an injection. So, by abuse of notation, we will identify $K^{\Ft/\langle b \rangle} \times K^\Ft$ with its image $\Phi( K^{\Ft/\langle b \rangle} \times K^\Ft)$. If $\kappa$ is a probability measure on $K$ then let $\kappa^{\Ft/\langle b \rangle} \times \kappa^\Ft$ be the product Borel probability measure on $K^{\Ft/\langle b \rangle} \times K^\Ft$. We extend this measure to all of $(K\times K)^\Ft$ by setting
  $$ \kappa^{\Ft/\langle b \rangle} \times \kappa^\Ft\Big(  (K\times K)^\Ft -   K^{\Ft/\langle b \rangle} \times K^\Ft\Big) =0.$$  
  

\begin{thm}\label{thm:1}
With notation as above, the Bernoulli shift-action $\Ft \cc (K^\Ft,\kappa^\Ft)$ is orbit equivalent to the shift-action $\Ft \cc \Big((K\times K)^\Ft, \kappa^{\Ft/\langle b \rangle} \times \kappa^\Ft\Big)$.
\end{thm}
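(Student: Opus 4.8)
The plan is to realize both sides as orbit-equivalent to a common "co-induced" system, using the fact that $\Ft = \langle a,b\rangle$ splits as an amalgamated-free-product-type structure over $\langle b\rangle$. The key observation is that $\Ft$ is a free product $\langle a\rangle * \langle b\rangle \cong \Z * \Z$, so the coset space $\Ft/\langle b\rangle$ carries a natural $\Ft$-action, and the factor $K^{\Ft/\langle b\rangle}$ appearing in Theorem~\ref{thm:1} is exactly the co-induction $\mathrm{CInd}_{\langle b\rangle}^{\Ft}$ of the one-point (trivial) system together with... no: more precisely, $K^{\Ft/\langle b\rangle}$ with its shift-action is the Bernoulli shift relative to the quasi-regular representation on $\ell^2(\Ft/\langle b\rangle)$. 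So the target system $\Ft \cc (K\times K)^{\Ft}$ with measure $\kappa^{\Ft/\langle b\rangle}\times\kappa^\Ft$ is the product of the honest Bernoulli shift $\Ft\cc(K^\Ft,\kappa^\Ft)$ with this "generalized Bernoulli shift" over $\Ft/\langle b\rangle$. Thus Theorem~\ref{thm:1} asserts that adjoining the generalized-Bernoulli factor $K^{\Ft/\langle b\rangle}$ changes nothing up to orbit equivalence.

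First I would set up the groupoid/equivalence-relation language: let $\mathcal R$ be the orbit equivalence relation of $\Ft\cc(K^\Ft,\kappa^\Ft)$. The map $\Phi$ exhibits $(K\times K)^\Ft$ (with the given measure) as a skew product: an $\mathcal R$-invariant (indeed $\Ft$-equivariant) measurable assignment $x\mapsto$ (a point of $K^{\Ft/\langle b\rangle}$). The right framework is that of a \emph{cocycle}: because $\langle b\rangle$ acts on $K^\Ft$ ergodically (being an infinite-index... no, it is infinite-order, hence the restricted action $\langle b\rangle\cc(K^\Ft,\kappa^\Ft)$ is a Bernoulli shift over $\Z$, which is ergodic, even mixing), and the orbit relation of $\Ft$ is generated by that of $\langle a\rangle$ together with that of $\langle b\rangle$. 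The plan is to build the orbit equivalence by an inductive/"telescoping" construction along the tree: choose a fundamental domain / transversal for the $\langle b\rangle$-action, use Dye-type reasoning (any two ergodic actions of $\Z$ on a fixed space are OE) to absorb the extra $K$-coordinate indexed by cosets, one $\langle b\rangle$-orbit at a time, while leaving the $\langle a\rangle$-direction untouched. Concretely: on each $\langle b\rangle$-orbit the extra data is a single element of $K$ (the value $x(g\langle b\rangle)$ is constant along right $\langle b\rangle$-cosets), so we are asking to realize $K^\Ft \times (K$-valued function constant on $\langle b\rangle$-orbits$)$ inside $K^\Ft$ up to OE. Since $\langle b\rangle\cc (K^\Ft,\kappa^\Ft)$ is measurably conjugate to a Bernoulli shift of $\Z$ and Bernoulli shifts of $\Z$ absorb product factors up to OE (Ornstein/amenable OE, [Dy59], [OW80]), one can find a $\langle b\rangle$-orbit-equivalence of $(K^\Ft,\kappa^\Ft)$ with $(K^\Ft\times K,\ \kappa^\Ft\times\kappa)$ where the last $K$ records the coset value; this has to be done \emph{fiberwise over the $\langle a\rangle$-structure} so that the resulting map intertwines not just the $\langle b\rangle$-relation but the full $\Ft$-relation.

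The main technical step, and the one I expect to be the principal obstacle, is making the above "absorb a factor via the $\langle b\rangle$-action" argument equivariant with respect to the rest of the group, i.e. producing an orbit equivalence of the \emph{$\Ft$-relation} and not merely of the $\langle b\rangle$-subrelation. The subrelation generated by $\langle b\rangle$ has uncountably many ergodic components (its ergodic decomposition is indexed by the $\langle a\rangle$-directions), and one must choose the Dye isomorphism on each component measurably and in a way compatible with how $a$ moves between components. This is exactly the kind of "relative Dye theorem over a subrelation" situation: one wants that if $\mathcal S\subset\mathcal R$ is a subrelation which is "co-amenable" in the appropriate sense and hyperfinite, then one can cut and rearrange within $\mathcal S$-classes to realize the factor. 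I would try to phrase it as: the inclusion $\langle b\rangle\cc K^\Ft \subset \Ft\cc K^\Ft$ is \emph{normal} / the subrelation is recurrent, and then apply a measurable selection + Dye argument uniformly over a transversal to the $\langle a\rangle$-part of the tree; the free-product structure (no relations between $a$ and $b$) is what guarantees the pieces glue without obstruction. The remaining steps — checking $\Phi$ is a Borel isomorphism onto a conull-complemented set, checking the pushed-forward measure is correct, and checking the constructed map is a genuine orbit equivalence (measure-preserving, orbit-preserving a.e.) — are routine bookkeeping once the equivariant-Dye construction is in place.
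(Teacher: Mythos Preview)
Your proposal has a genuine gap at the central step. You want a $\langle b\rangle$-orbit-equivalence between $(K^{\Ft},\kappa^{\Ft})$ and the product system carrying the extra $K$-coordinate that records the coset value $x_1(e\langle b\rangle)$. But $e\langle b\rangle$ is a fixed point of the left $\langle b\rangle$-action on $\Ft/\langle b\rangle$, so $x\mapsto x_1(e)$ is a nonconstant $\langle b\rangle$-invariant function on the target; the target system is therefore not $\langle b\rangle$-ergodic. Since $(K^{\Ft},\kappa^{\Ft})$ \emph{is} $\langle b\rangle$-ergodic, no $\langle b\rangle$-orbit-equivalence between the two exists, and Dye/Ornstein--Weiss cannot supply the map you are after. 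The obstacle you flag as ``the main technical step'' is thus not a matter of measurable selection or checking that free-product pieces glue: the entire absorption mechanism has to come from somewhere other than the $\langle b\rangle$-subrelation, and your outline does not provide one.

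The paper's proof is fully explicit and runs along the $\langle a\rangle$-direction instead. On each coset $g\langle a\rangle$ it uses a balanced-bracket rule to pair every $h$ with $x(h)=(i,j)$, $i\ne j$, to a nearby $h'=ha^{\pm n}$ with $x(h')=(j,i)$, and then \emph{swaps the heads of the outgoing $b$-edges} at the two paired vertices. This rewiring of the Cayley tree defines an involutive orbit-morphism $\Omega$ of the full $\Ft$-relation. The point of the swap is that it forces $(\Omega x)_1(gb)=(\Omega x)_2(g)$ for all $g$, so the first coordinate of $\Omega x$ becomes redundant; an induction over right-connected subsets of $\Ft$ then shows that $(\Omega x)_2$ is i.i.d.\ with law $\kappa$, and $\pi_2^{\Ft}\circ\Omega$ is the desired orbit-equivalence onto $(K^{\Ft},\kappa^{\Ft})$. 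The $K^{\Ft/\langle b\rangle}$-factor is absorbed by explicit combinatorics along $\langle a\rangle$-lines, not by any abstract amenable-absorption along $\langle b\rangle$-orbits.
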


\begin{thm}\label{thm:2}
Let $K$ be a finite set with  more than one element and let $\kappa$ be the uniform probability measure on $K$. Then the shift-action $\Ft \cc \Big((K\times K)^\Ft, \kappa^{\Ft/\langle b \rangle} \times \kappa^\Ft\Big)$ is SOE to a Bernoulli shift-action of the rank $(|K|+1)$-free group.
\end{thm}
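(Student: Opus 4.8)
The action $\Ft \cc \big((K\times K)^\Ft, \kappa^{\Ft/\langle b\rangle}\times\kappa^\Ft\big)$ consists of two independent "layers": a genuine Bernoulli layer $K^\Ft$ and a layer $K^{\Ft/\langle b\rangle}$ which is constant along right $\langle b\rangle$-cosets. The key structural fact is that $\Ft/\langle b\rangle$ can be identified with the vertex set of a tree (or a homogeneous space) on which $\Ft$ acts, and restricting the action to this layer alone gives a "co-induced" or "Bernoulli-over-a-quotient" action. The strategy is to pick a cross-section for the $\langle b\rangle$-cosets, pass to the induced equivalence relation on a carefully chosen positive-measure subset $A$, and recognize the restricted relation as the orbit relation of an honest Bernoulli shift over $\F_{|K|+1}$.

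Concretely, I would proceed as follows. First, fix a fundamental domain / transversal for the action of $\langle b\rangle$ on $\Ft$ on the left — equivalently choose coset representatives for $\langle b\rangle\backslash \Ft$ — and use this to describe the $K^{\Ft/\langle b\rangle}$ factor as a Bernoulli system indexed by the countable set $\Ft/\langle b\rangle$. Second, observe that the full orbit equivalence relation of $\Ft$ on the product splits (via the $\langle b\rangle$-action) into a "vertical" piece generated by $b$, which on the $x$-coordinate acts trivially and on the $y$-coordinate acts as the Bernoulli shift of $\langle b\rangle\cong\Z$, and a "horizontal/transversal" piece. Third — this is the crucial calculation — I would choose the subset $A\subset (K\times K)^\Ft$ to be a cylinder set that pins down the $y$-coordinate along one $\langle b\rangle$-orbit (using $\kappa$ uniform so that such a set has measure $|K|^{-n}$ for an appropriate finite $n$, and crucially so that the conditional measure is again a product measure). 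On $A$ the vertical $\Z$-direction is "used up" in the sense that the induced relation no longer moves within that coset nontrivially; what remains is an action whose Bernoulli base has been enlarged: the old base $K\times K$ together with the $|K|$ "choices" of how to re-enter the collapsed $\Z$-direction combine to produce $|K|$ extra free generators, giving rank $|K|+1 = |K|+1$. Finally I would verify that the induced relation on $(A,\kappa^{\otimes}(\cdot|A))$ is exactly the orbit relation of the standard Bernoulli shift of $\F_{|K|+1}$ with base of the appropriate (finite, uniform) size, invoking freeness and the classification of the restricted relation as treeable of the right cost, or more directly exhibiting the $\F_{|K|+1}$-action by explicit generators and a measure isomorphism.

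The main obstacle I anticipate is the third step: producing the set $A$ and the explicit identification of the induced equivalence relation with a genuine Bernoulli shift, rather than merely with "some" free action or with a relation of the right cost. Cost alone does not pin down a Bernoulli shift, so one must actually build the generating $\F_{|K|+1}$-action and check that the invariant measure, after conditioning, is a product measure with uniform finite base — this is where uniformity of $\kappa$ and the combinatorics of the coset space $\Ft/\langle b\rangle$ (a $(|K|\cdot\text{branching})$-regular structure) must be used in an essential, bookkeeping-heavy way. A secondary subtlety is ensuring the transversal for $\langle b\rangle$-cosets is chosen measurably and compatibly so that the map $\phi:A_1\to A_2$ genuinely intertwines the two orbit relations a.e., which requires care with the conull-set issues inherent in the definition of SOE.
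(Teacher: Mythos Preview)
Your plan has the geometry reversed, and this leads to a genuine gap. The first coordinate is constant along $\langle b\rangle$-cosets, so the $b$-direction is precisely where nothing interesting happens in the $\kappa^{\Ft/\langle b\rangle}$ layer; there is no information there to produce $|K|$ ``choices of how to re-enter.'' Your proposed subset $A$ --- a cylinder pinning the $y$-coordinate along a $\langle b\rangle$-orbit --- is either measure zero (if infinitely many coordinates are fixed) or does not simplify the induced relation in the way you describe. The paper instead takes $Y=\{x:x_1(e)=1\}$, a single-coordinate condition on the \emph{first} layer, and the entire construction takes place along $\langle a\rangle$-cosets, not $\langle b\rangle$-cosets.

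Here is the actual mechanism. The value $x_1(g)\in K$ is used to \emph{color} each $b$-edge $(g,gb)$. A bracket-matching pairing $P_k$ along the $a$-axis (exactly as in \S\ref{sketch1}) matches each vertex with $x_1=k$ to a vertex with $x_1=1$ in the same $\langle a\rangle$-coset; the $b$-edge of color $k$ is then slid so that both endpoints lie in $V^\phi_y=\{g:y_1(g)=1\}$. After this, every vertex of $V^\phi_y$ has $|K|$ outgoing colored $b$-edges (one per element of $K$), and one additional ``horizontal'' edge obtained by contracting the $a$-run to the next vertex with $x_1=1$; this gives the $|K|+1$ generators. The vertex labels record the entire finite block $(y(g),y(ga),\ldots,y(ga^n))$ along that $a$-run, and one then checks directly that these labels are i.i.d.\ over $\F_{|K|+1}$. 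Your proposal never arrives at this coloring-and-sliding idea, and the alternative you sketch (collapsing the vertical $\Z$ and counting re-entry choices) does not produce a well-defined free action, let alone one whose invariant measure is a product.
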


The two theorems above imply that for any $s\ge 3$, there is {\em some} Bernoulli shift over $\F_2$ that is SOE to a Bernoulli shift over $\F_s$. By [Bo09b], we know that all Bernoulli shifts over $\F_r$ are OE for any fixed $r$. So this proves every Bernoulli shift over $\F_2$ is SOE to every Bernoulli shift over $\F_s$. Since SOE is an equivalence relation this proves theorem \ref{thm:main}. Both theorems above are proven by explicit constructions.

 
 \subsection{The main ideas}
 
 In this section we give incomplete non-rigorous proof sketches of the theorems below which serve to illustrate the main ideas of the paper. Let $K=\Z/2\Z$ and let $\kappa$ be the uniform probability measure on $K$.
 
 \begin{thm}\label{thm:a}
 The Bernoulli shift-action $\Ft \cc (K^\Ft,\kappa^\Ft)$ is orbit equivalent to the shift-action $\Ft \cc \Big((K\times K)^\Ft, \kappa^{\Ft/\langle b \rangle} \times \kappa^\Ft\Big)$.
\end{thm}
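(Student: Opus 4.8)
The plan is to pass to orbit equivalence relations and to exhibit both of them as free products of the \emph{same} pair of hyperfinite equivalence relations. Let $\mathcal{R}$ be the orbit relation of the shift $\Ft\curvearrowright X:=K^{\Ft}$ and let $\mathcal{S}$ be the orbit relation of the shift $\Ft\curvearrowright Y:=K^{\Ft/\langle b\rangle}\times K^{\Ft}$ (here $K=\mathbb{Z}/2\mathbb{Z}$); both actions are essentially free (on $Y$ this is because $\Ft$ already acts freely on the second coordinate). Since $\Ft=\langle a\rangle*\langle b\rangle$ and the actions are free, each orbit relation is the free product of the orbit relations of the two free factors: $\mathcal{R}=\mathcal{R}_a*\mathcal{R}_b$ and $\mathcal{S}=\mathcal{S}_a*\mathcal{S}_b$, where the subscript indicates restriction to the corresponding cyclic subgroup. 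The subgroups $\langle a\rangle,\langle b\rangle$ act on $\Ft$ with all orbits infinite, and $\langle a\rangle$ acts on $\Ft/\langle b\rangle$ with all orbits infinite; hence $\mathcal{R}_a,\mathcal{R}_b,\mathcal{S}_a$ are orbit relations of (generalized) Bernoulli shifts of $\mathbb{Z}$ with all orbits infinite, so by Dye's theorem each is isomorphic to the ergodic hyperfinite type $\mathrm{II}_1$ equivalence relation $\mathcal{I}$.

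The obstruction is $\mathcal{S}_b$. Because $b$ fixes the coset $\langle b\rangle\in\Ft/\langle b\rangle$, the coordinate $x\mapsto x(\langle b\rangle)$ of the first factor of $Y$ is $\langle b\rangle$-invariant, so $\mathcal{S}_b$ is \emph{not ergodic}: it splits into $|K|$ pieces (two, as $K=\mathbb{Z}/2\mathbb{Z}$), each a rescaled copy of $\mathcal{I}$ on a set of measure $\kappa(k)=\tfrac12$. Thus one cannot match $\mathcal{S}_b$ with $\mathcal{R}_b$ directly. The theorem asserts, in effect, that the $a$-direction already glues these pieces together (indeed $\mathcal{S}$ is ergodic), and the work is to absorb that gluing into the $\langle b\rangle$-direction.

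The main step is therefore to replace $(a,b)$ by a new pair of transformations $a',b'$ in the full group $[\mathcal{S}]$ such that (i) $a'$ and $b'$ together generate $\mathcal{S}$, and (ii) the $\mathbb{Z}$-actions $\langle a'\rangle\curvearrowright Y$ and $\langle b'\rangle\curvearrowright Y$ are each ergodic. Take $a'=a$, and let $b'(\omega)=w(\omega)\,b\cdot\omega$, where $w(\omega)\in\langle a\rangle$ is measurably chosen: $w(\omega)=e$ off a sparse ``marker'' set, and at a marker $w(\omega)=a^{k}$ is chosen so that the point $a^{k}b\cdot\omega$ carries the value of the coordinate $x(\langle b\rangle)$ opposite to the one $\omega$ carries --- such a $k$ exists because the $x$-values of distinct cosets are i.i.d.\ and the tree is infinite. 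Since $b\cdot\omega=w(\omega)^{-1}b'(\omega)$ with $w(\omega)^{-1}\in\langle a\rangle$, the group $\langle a,b'\rangle$ has exactly the same orbits as $\langle a,b\rangle$, which is (i). For (ii): $b'$ is a single transformation, so its orbit relation is automatically hyperfinite; and the marker rule makes a $b'$-orbit alternate recurrently between the two values of $x(\langle b\rangle)$, which --- combined with ergodicity of the Bernoulli noise along each $\langle b\rangle$-line --- is engineered to force ergodicity of $\langle b'\rangle\curvearrowright Y$ (a skew-product / random-walk ergodicity argument). Granting (i)--(ii): $\mathcal{S}_{b'}$ and $\mathcal{S}_{a'}$ are each $\cong\mathcal{I}$ by Dye, each of cost $1$, and together they generate the cost-$2$ relation $\mathcal{S}$ ($\Ft$ has fixed price $2$); since a cost-$2$ relation generated by two cost-$1$ subrelations is their free product, $\mathcal{S}=\mathcal{S}_{b'}*\mathcal{S}_{a'}$. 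As $\mathcal{R}=\mathcal{R}_b*\mathcal{R}_a$ with all factors $\cong\mathcal{I}$, and (a standard property of free products of measured equivalence relations) the isomorphism type of a free product depends only on the isomorphism types of the factors, we get $\mathcal{R}\cong\mathcal{S}$; unwinding the identifications gives the desired orbit equivalence.

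The crux is the construction of $b'$: one must (a) choose the marker set so that $b'$ is a genuine measure-preserving automorphism of $Y$ and so that $\mathcal{S}_{a'}\vee\mathcal{S}_{b'}$ is all of $\mathcal{S}$ (equivalently, has cost exactly $2$, which then forces freeness of the join); (b) place the markers so that the resulting $\mathbb{Z}$-action of $b'$ mixes the two $x$-types strongly enough to be ergodic; and (c) carry out the measure bookkeeping so that the composite re-coordinatization $Y\to X$ is an honest isomorphism of probability spaces. Step (c) is where the Bernoulli coordinate $y$ is essential: it provides an i.i.d.\ reservoir spread over the whole orbit from which markers and coding data are read off, i.e.\ one runs a Rokhlin/marker argument adapted to $\Ft$. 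A fully explicit variant avoids the free-product input altogether and instead builds the orbit equivalence $Y\to X$ by a back-and-forth that applies Dye's theorem inside each of the two factors while holding the treeing fixed; this is the same construction in different language. The general-base case of Theorem~\ref{thm:1} is handled identically, with the two $x$-types replaced by $(K,\kappa)$.
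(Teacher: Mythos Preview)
There is a genuine gap at the step you label ``standard'': the assertion that the isomorphism type of a free product $\mathcal{R}_1*\mathcal{R}_2$ of measured equivalence relations depends only on the isomorphism types of the factors. This is not a standard fact, and in the form you need it is false. A realization of $\mathcal{I}*\mathcal{I}$ on a probability space is precisely the orbit relation of a free p.m.p.\ $\Ft$-action equipped with a treeing by two ergodic automorphisms; since there are uncountably many pairwise non-OE free ergodic $\Ft$-actions (Gaboriau--Popa, Ioana, Epstein) and many admit such treeings, ``$\mathcal{I}*\mathcal{I}$'' has many non-isomorphic realizations. Your reduction therefore replaces the theorem by a strictly stronger, false statement. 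The escape hatch you sketch --- a back-and-forth applying Dye's theorem inside each factor ``while holding the treeing fixed'' --- is exactly the missing content: the two Dye isomorphisms are \emph{different} measure-space maps of $Y$, and producing a single map that simultaneously respects both subrelations and the treeing is the whole difficulty. The construction of $b'$ and the ergodicity of $\langle b'\rangle$ are also only asserted, not carried out.

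The paper's argument is entirely different and avoids all of this machinery. It builds the orbit equivalence explicitly on the Cayley tree of $\Ft$: along each $\langle a\rangle$-line a balanced-bracket rule $P(x,g)$ pairs each vertex with label $(i,j)$ to the matching vertex with label $(j,i)$, and the map $\Omega$ swaps the heads of the corresponding outgoing $b$-edges. This $\Omega$ is an involution, hence an orbit self-equivalence of $(K\times K)^{\Ft}$; on the support of $\kappa^{\Ft/\langle b\rangle}\times\kappa^{\Ft}$ the swap forces $(\Omega x)_1(gb)=(\Omega x)_2(g)$, so the second-coordinate projection $\pi_2^{\Ft}\circ\Omega$ is injective there. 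The substantive remaining step is a direct inductive verification, over right-connected $\langle a\rangle$-saturated subsets of $\Ft$, that $(\pi_2^{\Ft}\Omega)_*(\kappa^{\Ft/\langle b\rangle}\times\kappa^{\Ft})=\kappa^{\Ft}$. No cost theory, Dye theorem, or free-product formalism is invoked.
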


\begin{thm}\label{thm:b}
The shift-action $\Ft \cc \Big(K^{\Ft/\langle b\rangle}, \kappa^{\Ft/\langle b \rangle}\Big)$ defined by $g\cdot x(C)=x(g^{-1}C)$ for $g\in \Ft$, $x \in K^{\Ft/\langle b\rangle}$ and $C\in \Ft/\langle b\rangle$ is SOE to a nontrivial Bernoulli shift over $\F_3$.
\end{thm}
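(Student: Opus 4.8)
Write $\mu:=\kappa^{\Ft/\langle b\rangle}$ and recall $K=\{0,1\}$. The plan is to realize the stable orbit equivalence by restricting the shift-action $\Ft\cc(K^{\Ft/\langle b\rangle},\mu)$ to a set of measure $1/2$ on which the induced equivalence relation is that of a free Bernoulli action of $\F_3$. Let $v_0=\langle b\rangle$ be the base coset; since $b\cdot v_0=v_0$, the set $A=\{x:x(v_0)=0\}$ is $\langle b\rangle$-invariant, has $\mu(A)=1/2$, and meets $\mu$-a.e.\ $\Ft$-orbit. I would first record two easy facts: (a) the shift-action is essentially free, because for $g\ne e$ the set of cosets not fixed by $g$ is infinite and is a union of infinite $\langle g\rangle$-orbits, so $\mu$-a.e.\ $x$ has $g\cdot x\ne x$; and (b) consequently it is an essentially free action of $\Ft$ and so has cost $2$, and since $E_\Ft$ is treeable, $\mathrm{cost}(E_\Ft|_A)=1+\tfrac{1}{\mu(A)}\bigl(\mathrm{cost}(E_\Ft)-1\bigr)=3$. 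Three is the cost of a Bernoulli shift over $\F_3$, and this is what singles out $\F_3$ as the right target.

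The heart of the argument is to turn this cost count into an honest free Bernoulli action of $\F_3$ on $A$ whose orbit relation is $E_\Ft|_A$. One generator is $\beta:=b|_A$, already a full automorphism of $A$. The other two come from $a$, exploiting that $E_\Ft|_A$ ``sees $a$ split in two'': conditioned on $x\in A$, the element $a$ returns $x$ to $A$ precisely when the freshly-exposed coordinate $x(a^{-1}v_0)$ equals $0$, an event of conditional probability $1/2$, and on the complementary event one must follow $a$ by a correction word (built from $b$ and from deeper, still-unseen coordinates of $x$) that re-enters $A$. I would organize the two branches into automorphisms $\gamma_1,\gamma_2$ of $A$ so that: (i) $\gamma_1,\gamma_2$ are bijective and $\{\beta,\gamma_1,\gamma_2\}$ generates $E_\Ft|_A$; (ii) this graphing is acyclic, which forces $\F_3=\langle\beta,\gamma_1,\gamma_2\rangle$ to act freely with orbit relation exactly $E_\Ft|_A$; and (iii) each elementary application of a generator exposes exactly one previously-unused $\kappa$-coordinate of $x$, so that the induced equivariant encoding $A\to L^{\F_3}$ pushes the conditional measure forward to a product measure and hence conjugates the $\F_3$-action to a Bernoulli shift. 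Equivalently, one builds a single measure isomorphism $A\to L^{\F_3}$ simultaneously conjugating the $\F_3$-action to the shift and $E_\Ft|_A$ to the Bernoulli orbit relation.

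Granting this, $(A,\mathrm{id})$ exhibits $\Ft\cc K^{\Ft/\langle b\rangle}$ as stably orbit equivalent to a Bernoulli shift over $\F_3$, and since all nontrivial Bernoulli shifts over $\F_3$ are orbit equivalent by [Bo09b], it is SOE to every one of them. This is the ``ideas-only'' form of Theorems~\ref{thm:1}--\ref{thm:2}, where the analogous argument carries an auxiliary Bernoulli factor $K^\Ft$, apparently to furnish the independent randomness that step (iii) here takes on faith.

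The main obstacle is step (iii), together with (i) and (ii): designing the correction scheme so that its net effect is memoryless one coordinate at a time. Producing a treeing of $E_\Ft|_A$ of cost $3$ — even one built from three full automorphisms — is comparatively soft; the delicate point is that a free treeable action of $\F_3$ of cost $3$ need not be a Bernoulli shift, so the correction words must be chosen with enough uniformity that the resulting dynamics is genuinely i.i.d.\ over $\F_3$, all the while keeping $\gamma_1,\gamma_2$ bijective and the graphing acyclic. That is where the real work lies.
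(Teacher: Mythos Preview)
Your high-level plan matches the paper: restrict to $A=\{x:x(v_0)=0\}$, keep $\beta=b|_A$ as one generator of $\F_3$, and manufacture two more so that the resulting free action on $A$ has orbit relation $E_{\Ft}|_A$ and is conjugate to a Bernoulli shift. The cost computation and the final appeal to [Bo09b] are also fine.

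The gap is exactly where you say it is, and your hint for filling it points in a different direction from the paper. You propose to ``split $a$ into two branches'' $\gamma_1,\gamma_2$ according to whether $a\cdot x\in A$, with the off-branch patched by a $b$-correction; but this yields two \emph{partial} maps on complementary halves of $A$, and you give no recipe for completing them to full automorphisms, let alone one that keeps the graphing acyclic and the encoding i.i.d.\ one bit at a time. The paper organizes the construction differently: only \emph{one} of the extra generators comes from $a$, namely the first-return map of $a$ to $A$ (a single full automorphism), while the \emph{third} generator comes from the $b$-edges at $1$-vertices. Along each $\langle a\rangle$-line a balanced-bracket rule pairs every $0$-vertex $g$ with a unique $1$-vertex $P(g)$; the generator $c$ then sends $g$ to the $0$-vertex paired (in the adjacent $a$-line) with $P(g)b$, i.e.\ one slides the $b$-edges at $1$-vertices over to their bracket-partners before deleting the $1$-vertices. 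The label recorded at each $0$-vertex is the geometric random variable ``distance along $a$ to the next $0$'' (so the base space is $\N$ with $\kappa_*(\{n\})=2^{-n}$, not a single bit), and the Bernoulli property follows because distinct $a$-lines carry independent $\kappa$-strings and the bracket pairing on a line depends only on that line. This pairing-and-sliding device is the missing idea; without it, or an explicit substitute, your steps (i)--(iii) remain unproved.
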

 
 Theorem \ref{thm:a} follows from theorem \ref{thm:1}. Theorem \ref{thm:b} follows from the proof of theorem  \ref{thm:2}. 
  
 \subsubsection{Proof sketch for theorem \ref{thm:a}}\label{sketch1}
 
  \begin{figure}[htb]
\begin{center}
\ \psfig{file=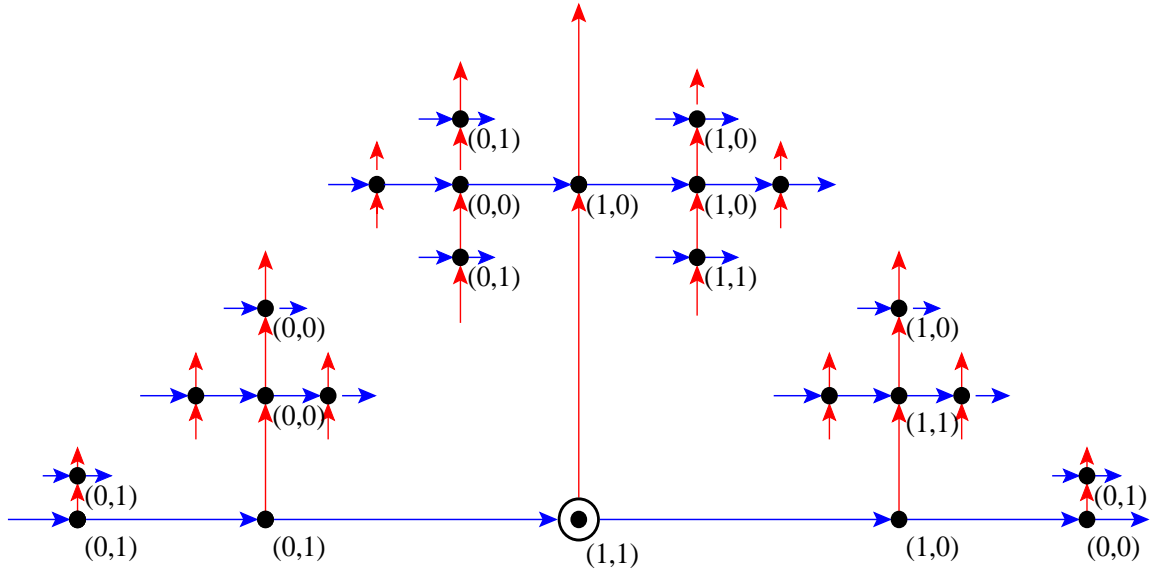,height=3in,width=6in}
\caption{A diagram for a point $x$ in the support of $\kappa^{\Ft/\langle b\rangle} \times \kappa^\Ft$. }
\label{fig:1}
\end{center}
\end{figure}

In figure \ref{fig:1}, there is a diagram for a point $x \in (K\times K)^\Ft$ that is typical with respect to the measure $\kappa^{\Ft/\langle b\rangle} \times \kappa^\Ft$. The underlying graph is the Cayley graph of $\Ft$ (only part of which is shown in the figure). The circled dot represents the identity element in $\Ft$. For every $g \in \Ft$ there are directed edges $(g,ga)$ and $(g,gb)$. Edges of the form $(g,ga)$ are drawn horizontally while those of the form $(g,gb)$ are drawn vertically. Some of the vertices are labeled with an ordered pair which is written to the lower right of the vertex. The ordered pair represents the value of $x$ at the corresponding group element. For example, the diagram indicates that $x(e)=(1,1)$, $x(a)=(1,0)$, $x(a^2)=(0,0)$, $x(b)=(1,0)$ and $x(ba)=(1,0)$. We assume that $x$ is in the support of $\kappa^{\Ft/\langle b\rangle} \times \kappa^\Ft$. So if $x(g)=(i,j)$ for some $g\in \Ft$ and $i,j\in K$ then $x(gb)=(i,k)$ for some $k\in K$. 

To form the orbit equivalence, we will switch certain pairs of $b$-labeled edges. Each switching pair will have their tail vertices on the same coset of $\Ft/\langle a\rangle$. After this is done, and after ``forgetting'' the first coordinates of the labels we will have a diagram of a typical point in $(K^\Ft, \kappa^\Ft)$.

In order to determine which $b$-labeled edges to switch, we place a left square bracket below every vertex labeled $(0,1)$. We place a right square bracket below every vertex labeled $(1,0)$. For example, figure \ref{fig:2} shows part of the Cayley graph with brackets indicated.

\begin{figure}[htb]
\begin{center}
\ \psfig{file=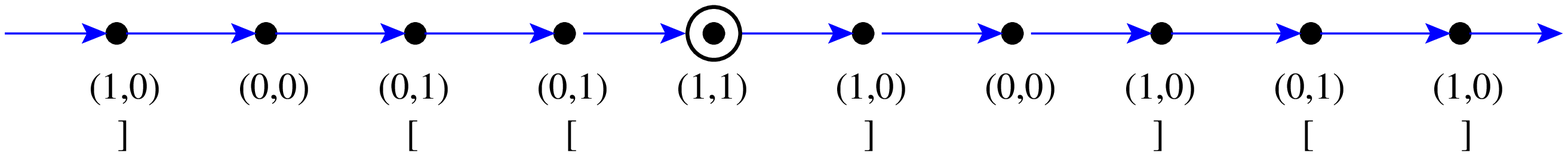,height=0.6in,width=6in}
\caption{}
\label{fig:2}
\end{center}
\end{figure}

The purpose of the brackets is that they give a natural way to pair vertices labeled $(0,1)$ with vertices labeled $(1,0)$. For example, the diagram shows that $a^{-1}$ is to be paired with $a$. Also $a^{-2}$ is paired with $a^3$ and $a^{4}$ is paired with $a^5$. We should emphasize that this occurs all over the group, not just the subgroup $\langle a\rangle$. For example, if $g \in \Ft$ is such that $x(g)=(0,1)$ and $x(ga)=(1,0)$ then $g$ is paired with $ga$.

This pairing of vertices induces a pairing of $b$-labeled edges: two $b$-labeled edges are paired if their source vertices are paired. For example, the diagram tells us that $(a^{-1},a^{-1}b)$ is paired with $(a,ab)$, $(a^{-2}, a^{-2}b)$ is paired with $(a^3,a^3b)$ and so on. The next step is to switch the heads of the paired edges. This is shown in figure \ref{fig:3}.

\begin{figure}[htb]
\begin{center}
\ \psfig{file=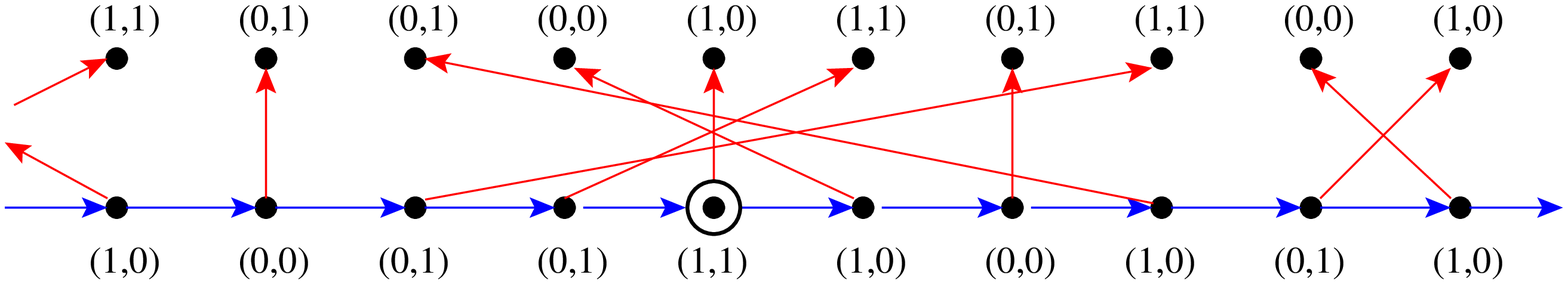,height=1.08in,width=6in}
\caption{}
\label{fig:3}
\end{center}
\end{figure}

After this switching is done, we have a diagram of a point $\Omega x \in (K\times K)^\Ft$. For example, $\Omega x(a^n)=x(a^n)$ for all $n$. According to figure \ref{fig:3}, our example satisfies $\Omega x(ab)= (0,0)$ whereas $x(ab)=(1,1)$. Notice that if $g \in \Ft$ and $\Omega x(g)=(i,j)$ for some $i,j\in K$ then $\Omega x(gb)=(j,k)$ for some $k\in K$. This is because $x$ is in the support of $\kappa^{\Ft/\langle b\rangle} \times \kappa^\Ft$. Therefore if for $i=1,2$, $\pi^\Ft_i:(K\times K)^\Ft \to K^\Ft$ are the projection maps defined by $y(g) = \big( \pi^\Ft_1 y(g) , \pi^\Ft_2 y(g) \big)$ for any $y\in (K\times K)^\Ft$ and $g\in \Ft$, then $\Omega x$ is completely determined by $\pi^\Ft_2\Omega x$. 

We claim that the map $\pi^\Ft_2\Omega: (K\times K)^\Ft \to K^\Ft$ is the required orbit-equivalence. To see this, first observe that $\Omega$ is an involution. Therefore, the map $\pi^\Ft_2 \Omega$ restricted to the support of $\kappa^{\Ft/\langle b\rangle} \times \kappa^\Ft$ is invertible. Because $\Omega$ is defined without mention of the origin, it follows that it takes orbits to orbits. It might not be obvious but $(\pi^\Ft_2\Omega)_*(\kappa^{\Ft/\langle b\rangle} \times \kappa^\Ft)=\kappa^\Ft$. This implies that $\Omega$ is the required orbit-equivalence. The proof of theorem \ref{thm:1} is, in spirit, very much like this sketch.

 \subsubsection{Proof sketch for theorem \ref{thm:b}}\label{sketch2}
 
Let $\Phi:K^{\Ft/\langle b\rangle}\to K^\Ft$ be the map defined by $\Phi(x)(g)=x\big(g\langle b\rangle\big)$. This map is equivariant and injective. So we will identify $K^{\Ft/\langle b\rangle}$ with its image under $\Phi$. We extend the product measure $\kappa^{\Ft/\langle b\rangle}$ to a measure on $K^\Ft$ by setting $\kappa^{\Ft/\langle b\rangle}(K^\Ft - K^{\Ft/\langle b\rangle})=0$. 

 \begin{figure}[htb]
\begin{center}
\ \psfig{file=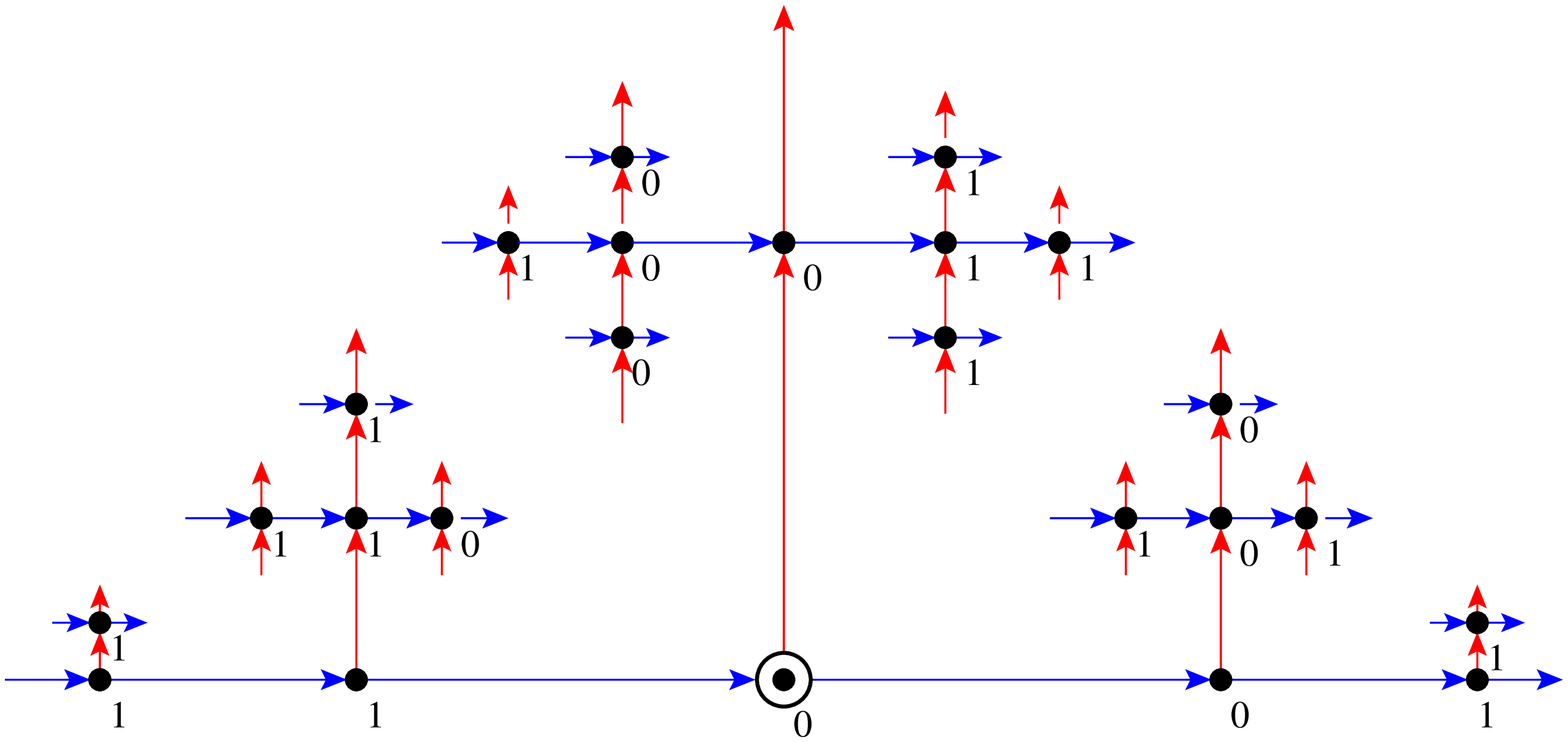,height=3in,width=6in}
\caption{A diagram for a point $x$ in the support of $\kappa^{\Ft/\langle b\rangle}$. }
\label{fig:1b}
\end{center}
\end{figure}

In figure \ref{fig:1b}, there is a diagram for a point $x \in K^\Ft$ that is typical with respect to $\kappa^{\Ft/\langle b\rangle}$.  Each vertex is labeled with a number in $K$ which represents the value of $x$ at the corresponding group element. For example, the diagram indicates that $x(e)=0=x(a)$, $x(a^2)=1$, $x(b)=0$ and $x(ba)=1$. We assume that $x$ is in the support of $\kappa^{\Ft/\langle b\rangle}$. So $x(g)=x(gb)$ for all $g\in \Ft$. 
 
Let us obtain a different diagram for $x$ as follows. Instead of labels on the vertices, we draw the vertical arrows differently: a vertical arrow with both endpoints labeled $1$ is now drawn as a dashed arrow (which is green in the color version of this paper). Vertical arrows with both endpoints labeled $0$ are drawn as before: as solid arrows (which are red in the color version). We also introduce new vertex labels. If $x(g)=0$ then we label the vertex corresponding to $g$ with the smallest positive number $n$ such that $x(ga^n)=0$. We call these {\em distance labels}. The result is shown in figure \ref{fig:2b}.
 
 \begin{figure}[htb]
\begin{center}
\ \psfig{file=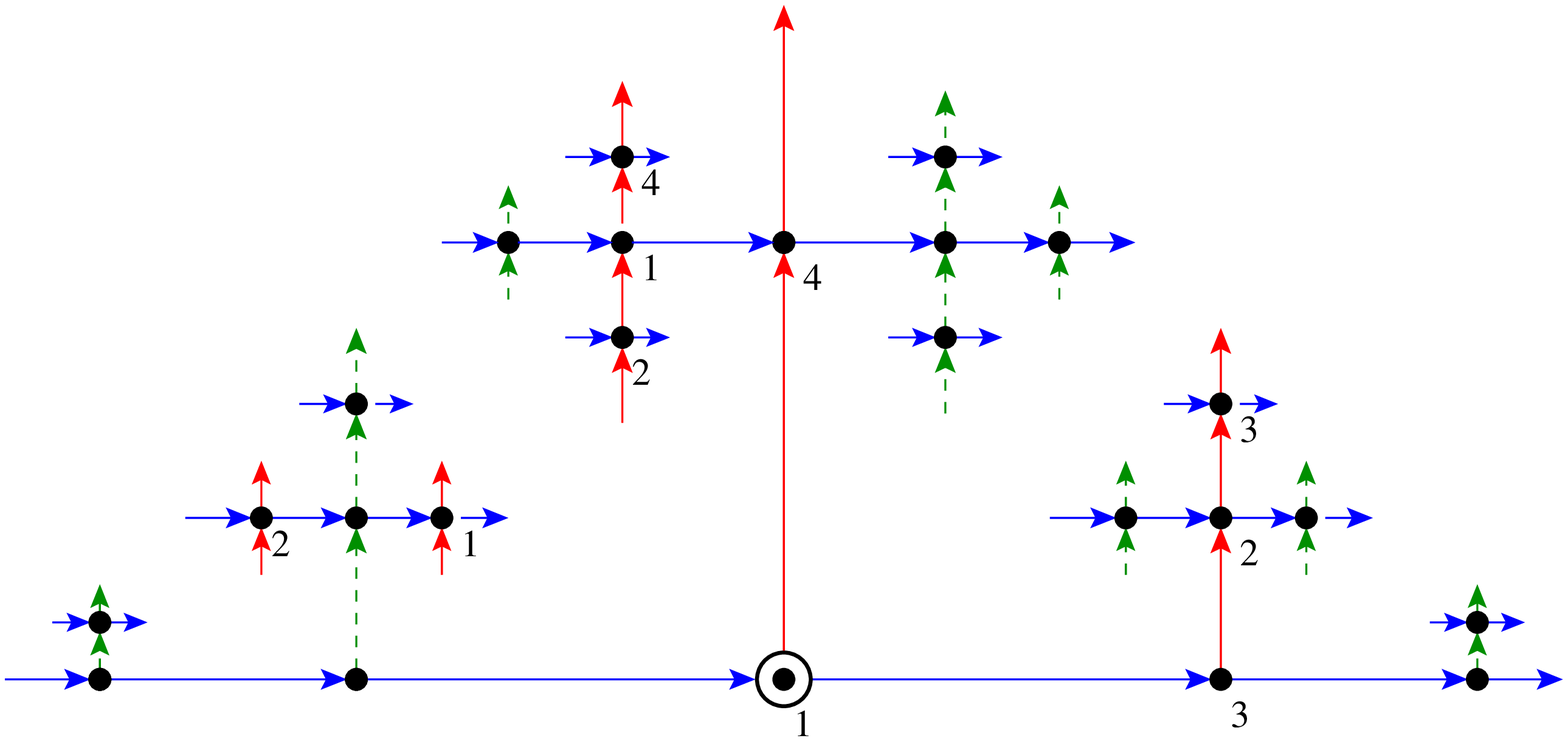,height=3in,width=6in}
\caption{A new diagram for a point $x$ in the support of $\kappa^{\Ft/\langle b\rangle}$. }
\label{fig:2b}
\end{center}
\end{figure}
 
 
 From this diagram for $x$ we will construct a diagram for a point $\Omega x \in \N^{\F_3}$ such that the map $x \mapsto \Omega x$ defines the stable orbit-equivalence. The domain of $\Omega$ will be the set $A_0:=\{y \in K^\Ft:~ y(e)=0\}$. 
 
 We begin by making small changes to the diagram in figure \ref{fig:2b}. First, as in the previous sketch, we place a left bracket next to every vertex that is incident to a solid vertical arrow and a right bracket next to every vertex incident to a dashed vertical arrow. This is shown in figure \ref{fig:3b}. To simplify the picture, we have not written in the distance labels.
 
  \begin{figure}[htb]
\begin{center}
\ \psfig{file=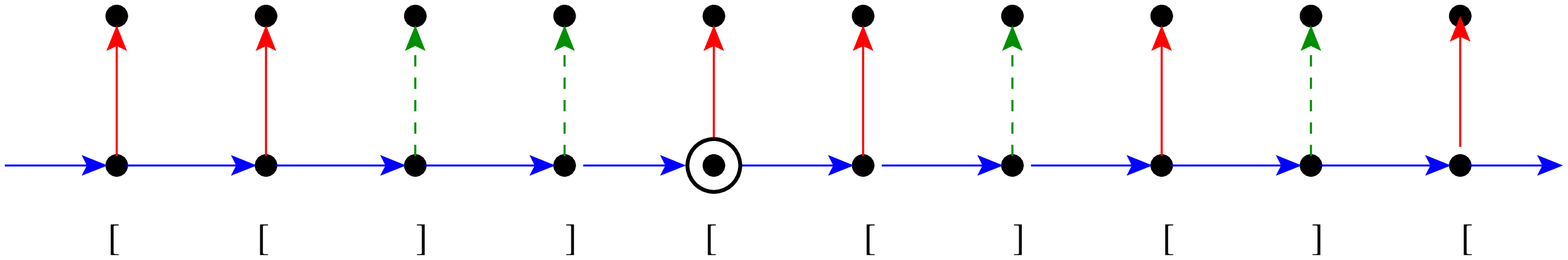,height=0.8in,width=5in}
\caption{}
\label{fig:3b}
\end{center}
\end{figure}
 
 The brackets give a natural way to pair vertices $g$ with $x(g)=0$ with vertices $h$ such that $x(h)=1$. For example, the diagram shows that $a^{-3}$ is paired with $a^{-2}$ and $a^{-4}$ is paired with $a^{-1}$. We should emphasize that this occurs all over the group, not just the subgroup $\langle a\rangle$. For example, if $g \in \Ft$ is such that $x(g)=0$ and $x(ga)=1$ then $g$ is paired with $ga$.
 
 Next, if a vertex $g$ is paired with $ga^n$ for some $n>0$ then we slide the tail of the outgoing dashed vertical arrow incident to $ga^n$ over to $g$. Similarly, we slide the head of the incoming dashed vertical arrow incident to $ga^n$ over to $g$. Figure \ref{fig:4b} shows part of the result of this operation. Note that the heads of the dashed vertical arrows have been moved but for the sake of not complicating the drawing the vertices that they are incident to are not drawn. 

 \begin{figure}[htb]
\begin{center}
\ \psfig{file=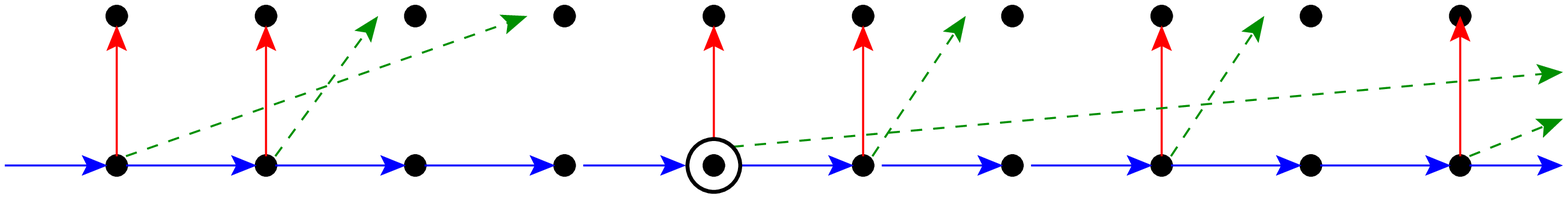,height=0.65in,width=5in}
\caption{}
\label{fig:4b}
\end{center}
\end{figure}

Next, we remove all vertices $g$ with $x(g)=1$. Each one of these vertices is incident to a horizontal arrow coming in and one going out. So when we remove such a vertex, we concatenate these arrows into one. The result is shown in figure \ref{fig:5b}, which also includes the distance labels.

 \begin{figure}[htb]
\begin{center}
\ \psfig{file=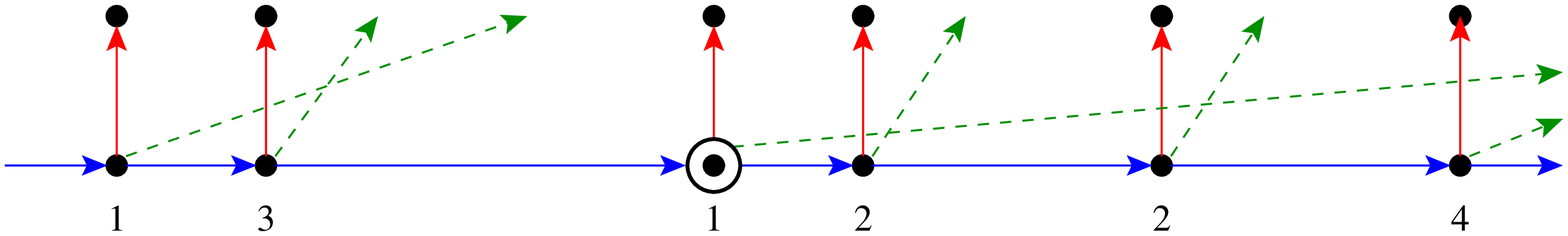,height=0.8in,width=5in}
\caption{}
\label{fig:5b}
\end{center}
\end{figure}

The new diagram is a diagram for a point $\Omega x$ in $\N^{\F_3}$. Here we write $\F_3=\langle a,b,c\rangle$. $a$-edges correspond to horizontal edges, $b$-edges to solid vertical edges and $c$-edges to dashed diagonal edges. For example, the figure above indicates that $\Omega x(e)=1$, $\Omega x(a)=\Omega x(a^2)=2$ and $\Omega x(a^3)=4$.

Now let $\kappa_*$ be the probability measure on $\N=\{0,1,2,3,\ldots\}$ defined by $\kappa_*(\{0\}) = 0$ and $\kappa_*(\{n\}) = 2^{-n}$ for $n\ge 1$. It may not be obvious, but $\Omega$ defines a stable orbit equivalence between $(K^\Ft, \kappa^{\Ft/\langle b\rangle})$ and $(\N^{\F_3},\kappa_*^{\F_3})$. The proof of theorem \ref{thm:2} is based on a very similar construction. 

\subsection{Organization}

In the next section, we discuss rooted networks; how to obtain them from group actions and conversely. In \S \ref{ section: theorem one } and \S \ref{ section: theorem two } we prove theorems \ref{thm:1} and \ref{thm:2} respectively.

{\bf Acknowledgements}. I'd like to thank Sorin Popa for asking whether the full 2-shift over $\F_2$ is SOE to a Bernoulli shift over $\F_3$. My investigations into this question led to this work and the paper [Bo09b].

\section{Rooted networks and orbit equivalence}
The purpose of this section is to introduce rooted networks and discuss their relationships with dynamical systems. They are used as a primary tool in subsequent constructions. 
\subsection{Rooted networks}
\begin{defn}
A rooted network is a quintuple $\cN=(V,E,\cL,\cG,\rho)$ where
\begin{enumerate}
\item $(V,E)$ is a connected and directed graph (so $E \subset V\times V$),
\item $\rho \in V$ is a distinguished vertex called the {\em root},
\item $\cL: V \to \rng(\cL)$ is a map. 
\item $\cG: E \to \rng(\cG)$ is a map.
\end{enumerate}
\end{defn}
$\cL$ and $\cG$ are called the vertex and edge labels respectively. Throughout this paper, $\rng(\cL)$ and $\rng(\cG)$ are finite or countable discrete sets. $\rng(\cG)$ will typically be a set of generators for a free group.

There is a natural Borel structure on the space of all rooted networks [AL07]. To define it, we need the following.
\begin{defn}
Two rooted networks are isomorphic if there an isomorphism of the underlying directed graphs that takes the root of the source network to the root of the target network and preserves both vertex and edge labels. 
\end{defn}

For $n\ge 0$, let $B_n(\cN)$ be the ball of radius $n$ centered at the root of $\Gamma$. It is itself a rooted network with the restricted vertex and edge labels. Define the distance between two rooted networks $\cN_1, \cN_2$ to be $\frac{1}{n+1}$ where $n\ge 0$ is the largest number such that $B_n(\cN_1)$ is isomorphic to $B_n(\cN_2)$ as rooted networks. If no such number exists, then let the distance between them equal $2$. This makes the space of all isomorphism classes of rooted networks with vertex degrees bounded by some number $d >0$ into a compact metric space. We will only use the Borel structure that this induces. See [AL07] for more background on rooted networks (but be warned: the definition here differs somewhat from the definition in [AL07]).

\subsection{Rooted networks from group actions}\label{ section: rooted networks from group actions }

Let $G$ be a discrete countable group and let $x\in K^G$ where $K$ is a countable or finite set. Let $\cS=\{s_1,\ldots,s_r\} \subset G$ generate $G$ as a group. The rooted network $\cN_x=(V_x,E_x,\cL_x,\cG_x,\rho_x)$ induced by $x$ and $\cS$ is defined by: $V_x=G$, $E_x=\{(g,gs):~g\in G, s\in \cS\}$, $\cL_x:G \to K$ satisfies $\cL_x(g)=x(g)$ and $\cG_x:E \to \cS$ satisfies $\cG_x(g,gs)=s$. The root $\rho_x$ is the identity element in $G$.

Recall that $G$ acts on $K^G$ by $(g\cdot x)(f)=x(g^{-1}f)$ for $g,f\in F$ and $x\in K^G$.
\begin{lem}\label{ lemma: root change }
Let $x\in K^G$ and $\cN_x=(V_x,E_x,\cL_x,\cG_x,\rho_x)$. For any $g\in G$ and $x \in K^G$, $\cN_{g\cdot x}$ is isomorphic to $(V_x,E_x,\cL_x,\cG_x,g^{-1})$. 
\end{lem}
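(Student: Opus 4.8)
The statement to prove is Lemma~\ref{ lemma: root change }: for $x \in K^G$ and $g \in G$, the rooted network $\cN_{g\cdot x}$ is isomorphic to $(V_x, E_x, \cL_x, \cG_x, g^{-1})$, i.e.\ to $\cN_x$ with the root moved from the identity to $g^{-1}$.

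The plan is to exhibit an explicit graph isomorphism and verify that it respects all the structure. Since both networks have vertex set $G$ (as $V_x = V_{g\cdot x} = G$), the natural candidate is left-translation. Specifically, I would define $\psi : G \to G$ by $\psi(h) = g^{-1}h$ (or possibly $h \mapsto gh$ — I'd check which direction works, but the point is one of the two left-translations). I claim $\psi$ is an isomorphism from $\cN_{g\cdot x}$ to $(V_x, E_x, \cL_x, \cG_x, g^{-1})$.

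The verification breaks into four routine checks. First, $\psi$ is a bijection of $G$, hence of the vertex sets. Second, edges are preserved: an edge of $\cN_{g\cdot x}$ has the form $(h, hs)$ with $s \in \cS$; applying $\psi$ gives $(g^{-1}h, g^{-1}hs)$, which is again of the form $(v, vs)$ and so is an edge of $(V_x, E_x)$ — and conversely, so $\psi$ carries $E_{g\cdot x}$ bijectively onto $E_x$. Third, edge labels are preserved: $\cG_{g\cdot x}(h, hs) = s = \cG_x(g^{-1}h, g^{-1}hs)$. Fourth — the one genuine computation — vertex labels are preserved: $\cL_{g\cdot x}(h) = (g\cdot x)(h) = x(g^{-1}h) = \cL_x(g^{-1}h) = \cL_x(\psi(h))$, where the middle equality is just the definition of the $G$-action on $K^G$. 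Finally, the root: the root of $\cN_{g\cdot x}$ is the identity $e$, and $\psi(e) = g^{-1}$, which is precisely the designated root of the target network.

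I do not anticipate a real obstacle here; this is essentially an unwinding of definitions, and the only thing to be careful about is getting the left/right and the inverse conventions consistent so that the vertex-label computation comes out with $g^{-1}$ in the right place and the edge-label computation doesn't pick up a conjugation (which it won't, since the action is on the left and the generators are multiplied on the right). I would present the proof as: ``Define $\psi(h) = g^{-1}h$. Then $\psi$ is a bijection $G \to G$, $\psi(e) = g^{-1}$, $\psi$ maps edges $(h,hs) \mapsto (g^{-1}h, g^{-1}hs)$ preserving the label $s$, and $\cL_{g\cdot x}(h) = x(g^{-1}h) = \cL_x(\psi(h))$; hence $\psi$ is the required isomorphism.''
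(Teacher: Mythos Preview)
Your proof is correct and is precisely the intended argument; the paper itself leaves this as an exercise, and the left-translation $\psi(h)=g^{-1}h$ you wrote down is the canonical isomorphism.
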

\begin{proof}
This is an easy exercise left to the reader.
\end{proof}

\subsection{Group actions from rooted networks}

\begin{defn}
Let $\cN=(V,E,\cL,\cG,\rho)$ be a rooted network. Let $\cS=\rng(\cG)$ be the set of edge labels of $\cN$. We will say that $\cN$ is {\em actionable} if for each $v\in V$ and each $s \in \cS$ there is a unique edge $e\in E$ such that $v$ is the source of $e$ and $\cG(e)=s$. We also require the  existence of a unique edge $\ce \in E$ such that $v$ is the range of $\ce$ and $\cG(\ce)=s$.
\end{defn}

If $\cN$ is actionable then we define an action of $\F_\cS$, the free group with generating set $\cS$, on $V$ by $v\cdot s = w$ where $w$ is the  range of the edge $e$ (as defined above). Also $v\cdot s^{-1}= u$ where $u$ is the source of the edge $\ce$. Observe that this is a right-action of $\F_\cS$ on $V$. For any $g\in \F_\cS$ and $v\in V$, let $A(\cN:v,g)=v\cdot g$. 
 

\subsection{Orbit morphisms from rooted networks}\label{ section: orbit morphisms }

\begin{defn}
Let $G_1 \cc^T (X_1,\mu_1)$ and $G_2 \cc^S (X_2,\mu_2)$ be two dynamical systems. An {\em orbit morphism} from the first  system to the second is a measurable map $\phi:X'_1 \to X_2$ such that for all $g\in G_1$ and $x \in X'_1$ there exists an $h\in G_2$ such that $\phi(T_gx)=S_h\phi(x)$. Here $X'_1 \subset X_1$ is a conull set.
\end{defn}

Let $\cS$ be a set, $\F=\F_\cS$ and let $\mu$ be a shift-invariant Borel probability measure on $K^{\F}$. Let $X \subset K^\F$ be a shift-invariant Borel set with $\mu(X)=1$. Suppose that for each $x\in X$ there is a map $\phi_x:E \to V\times V$ (where $\cN_x=(V_x,E_x,\cL_x,\cG_x,\rho_x)$ is the rooted network induced by $x$ and $\cS$). We may identify $V_x$ with $\F$ and $E_x$ with $\F \times \cS$ by the map $(g,gs) \mapsto (g,s)$. Thus $\phi_x$ can be thought of as a point in the space of all maps from $\F\times \cS$ to $\F \times \F$ which we endow with the topology of uniform convergence on finite subsets. Suppose that  the following hold.
\begin{enumerate}
\item The map $x\mapsto \phi_x$ is Borel. 
\item $ \big(V_x,\phi_x(E_x) \big)$ is connected.
\item For any $x\in X$ and $g\in \F$, $\phi_{g\cdot x} = g^{-1}\phi_x g$ where $g (v,w)=(g v, gw)$ for any edge $(v,w) \in E_x$ and any $g\in \F$. Here we are considering $v$ and $w$ as elements of $\F=V_x$ so that the multiplication $gv$ is in $\F$.

\item $\phi_x$ is injective and if $\cG^\phi_x:\phi_x(E)\to \cS$ is defined by $\cG^\phi_x \big(\phi_x(e) \big)=\cG_x(e)$ then the network $\cN^\phi_x:= \big(V_x,\phi_x(E_x),\cL_x,\cG^\phi_x,\rho_x \big)$ is actionable.
\end{enumerate}
Then define $\Omega:X \to K^\F$ by $\Omega x(g) = x \big( A(\cN^\phi_x: \rho_x, g) \big)$.

\begin{lem}\label{ lemma: orbit morphisms }
For any $x\in X$, the rooted network $\cN_{\Omega x}$ induced by $\Omega x$ and $\cS$ is isomorphic to $\cN^\phi_x$. Moreover, $\Omega$ is an orbit morphism. \end{lem}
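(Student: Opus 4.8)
The plan is to prove the two assertions of Lemma~\ref{ lemma: orbit morphisms } in turn, extracting the needed structural facts from the hypotheses (1)--(4) on the family $\{\phi_x\}$.

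\textbf{Step 1: The rooted network $\cN_{\Omega x}$ is isomorphic to $\cN^\phi_x$.} Fix $x\in X$. The candidate isomorphism is the map $\Theta_x : \F = V_{\Omega x} \to V_x = \F$ given by $\Theta_x(g) = A(\cN^\phi_x : \rho_x, g)$, i.e. the point reached from the root of $\cN^\phi_x$ by following the word $g$ through the $\cS$-labeled edges of $\cN^\phi_x$ (this uses that $\cN^\phi_x$ is actionable, hypothesis (4)). First I would check $\Theta_x$ is a bijection: since $\cN^\phi_x$ is actionable, $g \mapsto A(\cN^\phi_x:\rho_x,g)$ realizes a right $\F$-action on $V_x$, and because $(V_x,\phi_x(E_x))$ is connected (hypothesis (2)) this action is transitive on $V_x$; since $V_x \cong \F$ as a set and the $\F$-action is free (it is the right translation action on $\F$ transported along the actionable structure — here one uses that $\cN^\phi_x$ has $V_x = \F$ and that following a reduced word returns to the root only if the word is trivial, which follows from actionability plus the identification $V_x=\F$), $\Theta_x$ is a bijection. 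Next, $\Theta_x$ intertwines edge structure: an edge of $\cN_{\Omega x}$ is a pair $(g,gs)$ with label $s$, and by definition of the actionable structure $A(\cN^\phi_x:\rho_x, gs) = A(\cN^\phi_x:\rho_x,g)\cdot s$, so $\Theta_x$ sends $(g,gs)$ to an edge of $\cN^\phi_x$ with the same label $s$; thus $\Theta_x$ respects $E$ and $\cG$. It respects roots since the empty word gives $\Theta_x(\rho_{\Omega x}) = \rho_x$. Finally it respects vertex labels: by definition $\Omega x(g) = x(A(\cN^\phi_x:\rho_x,g)) = \cL_x(\Theta_x(g))$, and $\cL_{\Omega x}(g) = \Omega x(g)$, so $\cL_{\Omega x} = \cL_x \circ \Theta_x$. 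Hence $\Theta_x$ is the desired isomorphism.

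\textbf{Step 2: $\Omega$ is an orbit morphism.} I must show that for every $g\in\F$ and $x\in X$ there is $h\in\F$ with $\Omega(g\cdot x) = S_h(\Omega x)$, where $S$ is the shift on $K^\F$. The key input is the equivariance hypothesis (3): $\phi_{g\cdot x} = g^{-1}\phi_x g$. By Lemma~\ref{ lemma: root change } the network $\cN_{g\cdot x}$ is, up to isomorphism, $\cN_x$ with the root moved from $\rho_x = e$ to $g^{-1}$; combining this with (3), the modified network $\cN^\phi_{g\cdot x}$ is isomorphic to $(V_x,\phi_x(E_x),\cL_x,\cG^\phi_x, g^{-1})$, i.e. $\cN^\phi_x$ with the root moved to $g^{-1}$. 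Moving the root of an actionable network to a new vertex $v_0$ has a transparent effect on the induced function: if $h\in\F$ is the unique reduced word with $A(\cN^\phi_x:\rho_x,h) = g^{-1}$ (it exists and is unique by actionability + connectedness + freeness from Step 1), then for all $w\in\F$, $A((\cN^\phi_x \text{ re-rooted at } g^{-1}) : g^{-1}, w) = A(\cN^\phi_x : \rho_x, hw)$. Therefore $\Omega(g\cdot x)(w) = x'(A(\cdots)) = x(A(\cN^\phi_x:\rho_x,hw))\cdot(\text{shift bookkeeping})$; unwinding the definitions, $\Omega(g\cdot x)(w) = \Omega x(hw) = (S_{h^{-1}}\Omega x)(w)$ (with the precise exponent on $h$ determined by the left/right convention for the shift, which I will pin down carefully). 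Hence $\Omega(g\cdot x) = S_{h^{-1}}(\Omega x)$, and $\Omega$ is an orbit morphism. Borel measurability of $\Omega$ follows from hypothesis (1) together with the fact that $\cN^\phi_x$, and hence the finite-radius balls determining $\Omega x(g)$ for each $g$, depend in a Borel way on $x$.

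\textbf{Main obstacle.} The routine calculations are the root-change bookkeeping; the genuinely delicate point is verifying that the right $\F$-action $w\mapsto A(\cN^\phi_x:\rho_x,w)$ on $V_x$ is \emph{free} (equivalently, that $\Theta_x$ is injective, not merely surjective). Actionability gives a well-defined action and connectedness gives transitivity, but freeness needs that the cardinality/structure of $V_x$ forces no reduced word to fix the root — here one must use the identification $V_x = \F$ coming from $\cN_x$ and argue that $\phi_x$, being injective into $\F\times\F$ with $\big(V_x,\phi_x(E_x)\big)$ connected and actionable, produces a network whose induced action is conjugate (as a right $\F$-set) to $\F$ acting on itself. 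I would make this precise by an orbit-counting / covering-space style argument: a transitive right $\F$-action on a set is free iff it is simply transitive iff the set has the same cardinality as $\F$ and the stabilizer of one (hence every) point is trivial, and triviality of the stabilizer follows because a nontrivial stabilizer would, via the actionable edge structure, force a proper quotient of the Cayley graph of $\F$, contradicting that $V_x$ is (in bijection with) $\F$ and that the labeled edge structure of $\cN^\phi_x$ is inherited from that of $\cN_x$.
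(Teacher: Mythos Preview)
Your approach mirrors the paper's: it declares the first statement an ``easy exercise'' and proves the orbit-morphism part exactly as you do, by combining the equivariance hypothesis~(3) with the re-rooting formula of Lemma~\ref{ lemma: root change } to show $\cN_{\Omega(g\cdot x)}$ is $\cN^\phi_x$ with root moved to $g^{-1}$, hence equals $\cN_{h\cdot\Omega x}$ for some $h$.

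You have correctly located the one genuine issue---freeness of the right $\F$-action on $V_x=\F$ induced by $\cN^\phi_x$---but your resolution does not work. A transitive right $\F$-action on a countably infinite set need not be free: for any nontrivial infinite-index subgroup $H\le\F$, the Schreier graph on $\F/H$ is connected and actionable, and $\F/H$ is in bijection with $\F$. Neither your cardinality/covering sketch nor the vague appeal to the edge structure being ``inherited from $\cN_x$'' excludes this. Indeed, hypotheses (1)--(4) alone do \emph{not} force freeness; it is an extra feature of the particular $\phi_x$'s used.

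The fix is not to squeeze freeness out of (1)--(4) but to verify directly that $(V_x,\phi_x(E_x))$ is a tree in each application. The paper does exactly this in \S\ref{ section: theorem two } (the lemma showing $(V^\phi_y,E^\phi_y)$ is a tree, by moving one $b$-edge at a time and checking simple connectivity is preserved). The same style of argument handles the \S\ref{ section: theorem one } construction: there $\phi_x$ fixes every $a$-edge and replaces $(g,gb)$ by $(g,P(x,g)b)$ with $P(x,g)\in g\langle a\rangle$, so each single edge-move keeps the graph a tree.
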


\begin{proof}
 The first statement is an easy exercise left to the reader. The third item above implies that for any $g\in \F$, $\cN_{\Omega (g \cdot x)}$  is isomorphic to  $\big(V_x,\phi_x(E_x),\cL_x,\cG^\phi_x,g^{-1} \big)$.  By  the previous lemma, the latter is isomorphic to $\cN_{h \cdot \Omega x}$  for some $h\in \F$. This implies $\Omega(g\cdot x) = h\cdot \Omega x$. So $\Omega$ is an orbit morphism.
\end{proof}

\section{Theorem \ref{thm:1}}\label{ section: theorem one }

\subsection{The pairing}
To begin the proof of theorem \ref{thm:1}, we define a map that will play the role of the brackets of the sketch in \S \ref{sketch1}. Without  loss of generality we may assume $K=\N$. For $x \in (\N\times \N)^\Ft$ and $g\in \Ft$ define $P(x,g) \in \Ft$ as follows.
\begin{itemize}
\item If $x(g)=(i,i)$ for some $i\in \N$ then $P(x,g)=g$.
\item If $x(g)=(i,j)$ for some $i<j$ then let $P(x,g)=ga^n$ where $n>0$ is the smallest number such that 
\begin{enumerate}
\item $x(ga^n)=(j,i)$,
\item $\big|\{m \in \Z \cap [0,n]:~ x(ga^m)=(i,j)\}\big| = \big|\{m \in \Z \cap [0,n]:~ x(ga^m)=(j,i)\}\big|$.
\end{enumerate}
\item  If $x(g)=(j,i)$ for some $i<j$ then let $P(x,g)=ga^{-n}$ where $n>0$ is the smallest number such that 
\begin{enumerate}
\item $x(ga^{-n})=(i,j)$,
\item $\big|\{m \in \Z\cap [0,n]:~ x(ga^{-m})=(i,j)\}\big| = \big|\{m \in \Z\cap [0,n]:~ x(ga^{-m})=(j,i)\}\big|$.
\end{enumerate}
\end{itemize}
A-priori, $P(x,g)$ may not be well-defined since there might not exist a number $n$ satisfying the above conditions. However, we have:
\begin{lem}
Let $X$ be the set of all $x \in (\N \times \N)^\Ft$ such that for all $g\in \Ft$, $P(x,g)$ is well-defined. Then $\kappa^{\Ft/\langle b\rangle} \times \kappa^\Ft(X)=1$. Moreover, $P\big(x,P(x,g) \big)=g$  for all $x\in X, g\in \Ft$.
\end{lem}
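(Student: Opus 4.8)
The plan is to reduce the whole statement to a one-dimensional balanced-bracket / random-walk analysis carried out one left coset of $\langle a\rangle$ at a time. The starting observation is that $P(x,g)$ depends only on the restriction of $x$ to $g\langle a\rangle=\{ga^m:m\in\Z\}$, and that under $\mu:=\kappa^{\Ft/\langle b\rangle}\times\kappa^\Ft$ this restriction $(x(ga^m))_{m\in\Z}$ is i.i.d.\ with law $\kappa\times\kappa$. Indeed the first coordinates of the $x(ga^m)$ are the values of the coset function at the cosets $ga^m\langle b\rangle$, which are pairwise distinct because $a^l\in\langle b\rangle$ only when $l=0$; hence these first coordinates are i.i.d.\ $\kappa$, the second coordinates are i.i.d.\ $\kappa$ over the distinct group elements $ga^m$, and the two coordinate processes are independent of each other.

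First I would prove the measure statement. Fix $g\in\Ft$ and let $X_g=\{x:P(x,g)\text{ is well-defined}\}$; since $X=\bigcap_{g\in\Ft}X_g$ is a countable intersection it is enough to show $\mu(X_g)=1$. Condition on $x(g)$. If $x(g)=(i,i)$ then $x\in X_g$ by definition. If $x(g)=(i,j)$ with $i<j$ (the case $i>j$ being the mirror image, looking to the left), put $\xi_m=\mathbf 1[x(ga^m)=(i,j)]-\mathbf 1[x(ga^m)=(j,i)]$ and $W_t=1+\sum_{m=1}^t\xi_m$ for $t\ge 0$. A short bookkeeping check identifies the integer $n$ in the definition of $P(x,g)$ with the first return time of $W$ to $0$: if $n$ is that return time then $W_{n-1}\ge 1$ and $W_n=0$ force the $n$-th step to be $-1$, i.e.\ $x(ga^n)=(j,i)$, while $W_n=0$ is exactly condition (2); conversely any $n'$ satisfying (1) and (2) has $W_{n'}=0$ and hence $n'\ge n$. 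So $P(x,g)$ is well-defined precisely when $W$ reaches $0$. Conditioned on $x(g)=(i,j)$ — an event of positive measure $\kappa(i)\kappa(j)$, so in particular $\kappa(i),\kappa(j)>0$ — the $\xi_m$ are i.i.d., mean zero (each of $(i,j)$ and $(j,i)$ has probability $\kappa(i)\kappa(j)$), and nonzero with probability $2\kappa(i)\kappa(j)>0$; such a walk is recurrent (Chung--Fuchs, or: a.s.\ there are infinitely many nonzero steps and, restricted to them, $W$ is a simple symmetric walk which hits every integer), so $W$ reaches $0$ almost surely. Summing over the countably many values of $x(g)$ gives $\mu(X_g)=1$.

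Next I would establish $P(x,P(x,g))=g$ for $x\in X$. The case $x(g)=(i,i)$ is immediate, and the case $x(g)=(j,i)$, $i<j$, is the mirror of the case $x(g)=(i,j)$, $i<j$, which I treat now. Here $P(x,g)=ga^n=:h$ with $n$ the first return time of $W$ to $0$; since $W_0=1$ and every increment is $\ge -1$, one has $W_t\ge 1$ for all $0\le t<n$. Now $x(h)=(j,i)$ puts us in the third clause of the definition, so $P(x,h)=ha^{-n'}$ where $n'>0$ is least with $x(ha^{-n'})=(i,j)$ and $V_{n'}=0$, for $V_t:=\#\{m\in\Z\cap[0,t]:x(ha^{-m})=(i,j)\}-\#\{m\in\Z\cap[0,t]:x(ha^{-m})=(j,i)\}$. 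Writing $ha^{-m}=ga^{n-m}$ and reindexing, one gets $V_t=W_n-W_{n-t-1}=-W_{n-t-1}$ for $0\le t\le n$ (with the convention $W_{-1}:=0$). Since $W_s\ge 1$ for $0\le s\le n-1$, we have $V_t<0$ for $0\le t<n$ and $V_n=0$; also $ha^{-n}=g$ with $x(g)=(i,j)$, so $n'=n$ and $P(x,h)=ha^{-n}=g$.

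The main obstacle is the minimality half of the last paragraph: ruling out a spurious earlier stopping time when one "returns" from $P(x,g)$. This is precisely the assertion that bracket-matchings are properly nested, and I expect the cleanest route is the random-walk translation above, where the identity $V_t=-W_{n-t-1}$ together with the sign fact that $W$ stays $\ge 1$ strictly before its first return to $0$ makes nesting transparent. The only genuinely fiddly point is the boundary indexing in that identity — keeping track of the endpoints $m=0$ and $m=n$, which carry the two initializing $\pm 1$'s — but this is routine.
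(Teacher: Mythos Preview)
Your proof is correct and carries out in full the exercise the paper leaves to the reader; the coset reduction, the random-walk encoding of the bracket count, and the time-reversal identity $V_t=-W_{n-t-1}$ for the involution are exactly the natural route. One small difference worth noting: the paper's hint points to a more general statement---that $\mu(X)=1$ for \emph{any} shift-invariant $\mu$ with $\mu(\{x:x(e)=(i,j)\})=\mu(\{x:x(e)=(j,i)\})$---which would require a recurrence result for stationary (not i.i.d.) mean-zero integer cocycles (e.g.\ Atkinson's theorem) rather than simple-random-walk recurrence. Your argument instead exploits the product structure of $\kappa^{\Ft/\langle b\rangle}\times\kappa^\Ft$ to get genuinely i.i.d.\ increments along each $g\langle a\rangle$, which is simpler and entirely sufficient for the lemma as stated.
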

\begin{proof}
This is an easy exercise left to the reader. Indeed, if $\mu$ is any shift-invariant Borel probability measure on $(\N\times \N)^\Ft$ such that $\mu\big(\{x :~ x(e)=(i,j)\}\big) = \mu\big(\{x :~ x(e)=(j,i)\}\big)$ for all $i,j \in \N$ then $\mu(X)=1$.

\end{proof}

\subsection{An orbit  equivalence}

In this section, we define a map that plays the role of the switching in the sketch of \S \ref{sketch1}.

Recall that $\Ft=\langle a,b \rangle$. Let $x\in X$. Let $\cN_x=(V_x,E_x,\cL_x,\cG_x,\rho_x)$ be the rooted network induced by $x$ and $\cS=\{a,b\}$. For each edge $e \in E$ define $\phi_x(e)$ by:
\begin{enumerate}
\item if $e=(g,ga)$ for some $g\in \Ft=V_x$ then $\phi_x(e):=e$,
\item if $e=(g,gb)$ for some $g\in \Ft=V_x$ then $\phi_x(e) :=  \big(g, P(x,g)b \big)$.
\end{enumerate}

\begin{lem} The map $x\mapsto \phi_x$ is Borel and for  any $x\in X$  the following hold.
\begin{enumerate}
\item $ \big(V_x,\phi_x(E_x) \big)$ is connected.
\item For any $x\in X$ and $g\in \Ft$, $\phi_{g\cdot x} = g^{-1}\phi_x g$.

\item $\phi_x$ is injective and if $\cG^\phi_x:\phi_x(E)\to \cS$ is defined by $\cG^\phi_x \big(\phi_x(e) \big)=\cG_x(e)$ then the network $\cN^\phi_x:= \big(V_x,\phi_x(E),\cL_x,\cG^\phi_x,\rho_x \big)$ is actionable.
\end{enumerate}
\end{lem}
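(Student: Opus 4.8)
The plan is to verify the three properties of $\phi_x$ in turn, since they are exactly the hypotheses needed to invoke Lemma \ref{ lemma: orbit morphisms } and conclude that $\Omega$ is an orbit morphism (and, with the involution property from the previous lemma, an orbit equivalence). The Borel measurability of $x\mapsto\phi_x$ is immediate: $\phi_x$ is determined on each edge by the value of $P(x,\cdot)$, which in turn is determined by finitely many coordinates of $x$ along a ray in the $a$-direction, so $x\mapsto\phi_x$ is a pointwise limit of maps depending on finitely many coordinates, hence Borel.

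For connectedness of $(V_x,\phi_x(E_x))$: the $a$-edges are untouched, so the graph $(V_x,\phi_x(E_x))$ already contains every coset of $\langle a\rangle$ as a connected subgraph (a bi-infinite path). It remains to check these cosets are linked up. The original $b$-edge $(g,gb)$ becomes $(g,P(x,g)b)$; note its source is still $g$, and $P(x,g)\in g\langle a\rangle$, so $P(x,g)b$ lies on the coset $g\langle a\rangle b$. Thus for every coset $C$ of $\langle a\rangle$, the modified $b$-edges out of $C$ still hit the coset $Cb$ and, running $b$ backwards, we reach $Cb^{-1}$; iterating, we recover every coset of $\langle a\rangle$ in $\Ft$, so the graph is connected. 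I would phrase this by noting that the map $C\mapsto Cb$ on cosets of $\langle a\rangle$ is unaffected by the modification, and cosets of $\langle a\rangle$ together with these maps generate $\Ft$ acting on itself.

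For equivariance, $\phi_{g\cdot x}=g^{-1}\phi_x g$: using Lemma \ref{ lemma: root change }, acting by $g$ just relabels $V_x=\Ft$ by left translation $h\mapsto g^{-1}h$. One checks directly that $P(g\cdot x, h) = g^{-1}P(x,gh)$ — this is because the defining conditions for $P$ only involve the values of $x$ along the coset $h\langle a\rangle$ and counting along it, and $(g\cdot x)(ha^m) = x(gha^m)$, so the combinatorics match after the shift. Plugging this into the definition of $\phi$ gives the claimed conjugation identity; the $a$-edges are fixed, so they pose no issue.

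The main obstacle is item (3): showing $\phi_x$ is injective and that $\cN^\phi_x$ is actionable, i.e. that at every vertex $v$ there is exactly one outgoing and one incoming $b$-edge (the $a$-edges are clearly fine). Injectivity and the ``exactly one outgoing $b$-edge'' condition are automatic since $\phi_x(g,gb)$ has source $g$ and the map $g\mapsto P(x,g)$ is a well-defined function on $X$. The real content is ``exactly one incoming $b$-edge at each vertex'': the incoming $b$-edges at $w$ in $\cN^\phi_x$ are those $(g,P(x,g)b)$ with $P(x,g)b=w$, i.e. $P(x,g)=wb^{-1}$; since $P(x,\cdot)$ is an involution on $X$ (proved in the previous lemma), the unique such $g$ is $g=P(x,wb^{-1})$. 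So each vertex has precisely one incoming $b$-edge, and $\cN^\phi_x$ is actionable. I would present this last point carefully, as it is where the involutivity of $P$ — the nontrivial part of the pairing construction — actually gets used, and it is the heart of why the switching yields a valid action rather than just a relation.
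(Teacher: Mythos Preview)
Your proof is correct and supplies exactly the routine verification that the paper omits (the paper's own proof is simply ``This is an easy exercise left to the reader''). One minor slip: with the paper's convention $(g\cdot x)(f)=x(g^{-1}f)$ you should have $(g\cdot x)(ha^m)=x(g^{-1}ha^m)$, giving $P(g\cdot x,h)=gP(x,g^{-1}h)$ rather than the formula you wrote; this is a harmless sign issue (arguably reflecting an ambiguity in the paper's conjugation notation) and does not affect the argument.
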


\begin{proof}
This is an easy exercise left to the reader.
\end{proof}

Define $\Omega:X \to (\N \times \N)^\Ft$ as in \S \ref{ section: orbit morphisms }. I.e., $\Omega x(g) := x \big( A(\cN^\phi_x: \rho_x, g) \big)$ for all $g\in \Ft$.

\begin{lem}\label{ lemma: orbit equivalence }
$\Omega(X) \subset X$. Moreover, $\Omega(\Omega x)=x$ for any $x\in X$. Thus $\Omega$ is an orbit-equivalence from the shift-action $\Ft \cc \big((\N\times \N)^\Ft, \kappa^{\Ft/\langle b\rangle} \times \kappa^\Ft\big)$ to the shift-action $\Ft \cc \big((\N\times \N)^\Ft, \mu\big)$ where $\mu=\Omega_*  \big(\kappa^{\Ft/\langle b\rangle} \times \kappa^\Ft\big)$.
\end{lem}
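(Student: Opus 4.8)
The plan is to stay entirely inside the rooted-network formalism of \S\ref{ section: orbit morphisms }, exploiting that the edge map $\phi_x$ fixes every $a$-edge. For $x\in X$ set $\beta_x(g):=A(\cN^\phi_x:\rho_x,g)$, so that $\Omega x(g)=x(\beta_x(g))$ by definition. By Lemma~\ref{ lemma: orbit morphisms }, $\cN_{\Omega x}$ is isomorphic to $\cN^\phi_x$; since $\cN_{\Omega x}$ is a labelled Cayley graph of $\Ft$ on which $\Ft$ acts by right multiplication and $\cN^\phi_x$ is actionable, any rooted isomorphism between them intertwines the two right $\Ft$-actions, hence must equal $\beta_x$. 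So $\beta_x:\Ft\to\Ft$ is a bijection, $\beta_x(gg')=\beta_x(g)\cdot g'$ for the right action on $\cN^\phi_x$, $\beta_x(ga)=\beta_x(g)a$ (genuine multiplication, as $a$-edges are untouched), and $\beta_x(gb)=P(x,\beta_x(g))\,b$.

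First I would show $\Omega(X)\subseteq X$. Fix $g$. Since the $a$-line through any vertex is unaffected by the switching, the sequence $m\mapsto\Omega x(ga^m)=x(\beta_x(g)a^m)$ is exactly the label sequence of $x$ along the $a$-line through $\beta_x(g)$. Because $x\in X$, the integer demanded in the definition of $P(x,\beta_x(g))$ exists, and this is precisely the condition for $P(\Omega x,g)$ to be well defined. Matching the three cases in the definition of $P$ then yields the conjugation identity $\beta_x(P(\Omega x,g))=P(x,\beta_x(g))$ for all $g$. In particular $\Omega x\in X$, so $\Omega\circ\Omega$ is defined on $X$.

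Next, for $\Omega(\Omega x)=x$: apply the construction to $\Omega x$ to get $\cN^\phi_{\Omega x}$ and $\beta_{\Omega x}:=A(\cN^\phi_{\Omega x}:\rho_{\Omega x},\cdot)$, so that $\Omega(\Omega x)(g)=\Omega x(\beta_{\Omega x}(g))=x(\beta_x(\beta_{\Omega x}(g)))$; thus it suffices to prove $\beta_x\circ\beta_{\Omega x}=\mathrm{id}_{\Ft}$. The heart of the matter is the claim that $\beta_x$ is \emph{also} a rooted-network isomorphism from $\cN^\phi_{\Omega x}$ onto $\cN_x$. Vertex labels ($\Omega x(h)=x(\beta_x(h))$) and the root are carried correctly exactly as before, and $a$-edges of $\cN^\phi_{\Omega x}$ map to $a$-edges of $\cN_x$; the one substantive point is that the $b$-edge $(h,P(\Omega x,h)b)$ of $\cN^\phi_{\Omega x}$ is sent by $\beta_x$ to $(\beta_x(h),\,\beta_x(P(\Omega x,h)b))$ and $\beta_x(P(\Omega x,h)b)=\beta_x(P(\Omega x,h))\cdot b=P(x,\beta_x(h))\cdot b=P(x,P(x,\beta_x(h)))\,b=\beta_x(h)\,b$, i.e.\ it is sent to the $b$-edge $(\beta_x(h),\beta_x(h)b)$ of $\cN_x$; here the four equalities use, in turn, that $\beta_x$ intertwines the right actions, the conjugation identity of the previous step, the defining rule $u\cdot b=P(x,u)b$ of $\cN^\phi_x$, and the involutivity $P(x,P(x,w))=w$ from the preceding lemma. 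Granting this, $\cN^\phi_{\Omega x}$ and $\cN_x$ are both actionable and $\beta_x$ is a rooted isomorphism between them, hence intertwines the $\Ft$-actions, so from $A(\cN_x:\rho_x,g)=g$ we get $g=\beta_x(\beta_{\Omega x}(g))$, whence $\Omega(\Omega x)=x$.

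Finally I would conclude: by the previous two steps $\Omega:X\to X$ is a Borel involution, in particular a Borel automorphism of $X$ pushing $\kappa^{\Ft/\langle b\rangle}\times\kappa^\Ft$ forward to $\mu$ by definition; and since $\Omega$ and its inverse $\Omega$ are both orbit morphisms (Lemma~\ref{ lemma: orbit morphisms }), $\Omega$ maps each $\Ft$-orbit onto an $\Ft$-orbit, which together with the measure identity is exactly the assertion that $\Omega$ is an orbit equivalence of the two shift-actions. (If one wants $\mu$ itself shift-invariant: $\Omega$ is then an automorphism of the orbit equivalence relation of the shift on $X$, and $\kappa^{\Ft/\langle b\rangle}\times\kappa^\Ft$, being shift-invariant, is invariant under the full group of that relation, so its image $\mu$ is too.) The main obstacle is the isomorphism claim in the third paragraph — that a second round of switching exactly reverses the first — and, as the computation above indicates, it reduces to precisely two facts already in hand: $a$-edges are never moved, and $P(x,\cdot)$ is an involution.
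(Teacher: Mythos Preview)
Your proof is correct and follows the paper's approach: invoke Lemma~\ref{ lemma: orbit morphisms } for the orbit-morphism property and reduce $\Omega^2=\mathrm{id}$ to the involutivity $P(x,P(x,g))=g$. The paper's own proof is two sentences leaving all verification implicit; your conjugation identity $\beta_x(P(\Omega x,g))=P(x,\beta_x(g))$ and the resulting rooted-network isomorphism $\cN^\phi_{\Omega x}\cong\cN_x$ are precisely the details that sentence omits, and your parenthetical on shift-invariance of $\mu$ (via $R$-invariance being preserved under automorphisms of $R$) is a correct bonus the paper does not address.
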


\begin{proof}
$\Omega$ is an orbit morphism by lemma \ref{ lemma: orbit morphisms }. That $\Omega(\Omega x)=x$ follows from the fact that $P \big(x,P(x,g) \big)=g$ for any $x\in X$ and $g\in \Ft$.
\end{proof}

\subsection{A measure space isomorphism}\label{ section: isomorphism }

In  this section, we prove $\Ft \cc \big((\N\times \N)^\Ft, \mu\big)$  is measurably conjugate  to $\Ft \cc \big(\N^\Ft, \kappa^\Ft\big)$. We will need the next lemma.

For $x\in X$, define $\cN^\phi_x = \big(V_x,\phi_x(E_x),\cL_x,\cG^\phi_x,\rho_x \big)$ as above. For $g\in \Ft$, define   $\alpha_x(g):=A(\cN^\phi_x:\rho_x,g)$. So $\Omega x(g)=x \big(\alpha_x(g) \big)$. 

\begin{lem}\label{lem:alpha}
For $x\in X$, $g \in \Ft$ and $n\in \Z$,
\begin{eqnarray*}
\alpha_x(ga^n)&=&\alpha_x(g)a^n\\
\alpha_x(gb)&=&P\big(x,\alpha_x(g) \big)b\\
\alpha_x(gb^{-1})&=& P \big(x,\alpha_x(g)b^{-1} \big).
\end{eqnarray*}


 \end{lem}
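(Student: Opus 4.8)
The plan is to unwind the definitions of $\alpha_x$ and the action $A(\cN^\phi_x:\rho_x,\cdot)$ directly, using the recursive structure of the right-action of $\F_\cS$ on the vertex set of an actionable rooted network. Recall that $\alpha_x(g)=A(\cN^\phi_x:\rho_x,g)$, where $\cN^\phi_x=(V_x,\phi_x(E_x),\cL_x,\cG^\phi_x,\rho_x)$ and the action is built by following edges: for a vertex $v$ and generator $s\in\cS=\{a,b\}$, $v\cdot s$ is the range of the unique $\cG^\phi_x$-edge labeled $s$ with source $v$, and $v\cdot s^{-1}$ is the source of the unique such edge with range $v$. Since the action is a right-action, $A(\cN^\phi_x:\rho_x,gh)=A(\cN^\phi_x:\,A(\cN^\phi_x:\rho_x,g),\,h)$, i.e. $\alpha_x(gh)$ is obtained from $\alpha_x(g)$ by the action of $h$ in the network. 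So it suffices to compute the action of the single generators $a^{\pm1}$ and $b^{\pm1}$ starting from an arbitrary vertex $\alpha_x(g)$.

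The key observation is the description of $\phi_x$. By item (1) in the definition of $\phi_x$, the $a$-labeled edges of $\cN^\phi_x$ coincide with those of $\cN_x$, namely $(h,ha)$ for $h\in\Ft$; hence for any vertex $v$ of the network, $v\cdot a = va$ and $v\cdot a^{-1}=va^{-1}$, which immediately gives the first identity $\alpha_x(ga^n)=\alpha_x(g)a^n$ by induction on $n$. By item (2), the $b$-labeled edge of $\cN^\phi_x$ with source $h$ is $(h,P(x,h)b)$; thus $v\cdot b = P(x,v)b$, which applied to $v=\alpha_x(g)$ yields the second identity $\alpha_x(gb)=P(x,\alpha_x(g))b$. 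For the third, I need the source of the unique $b$-labeled edge of $\cN^\phi_x$ whose range is $v=\alpha_x(g)$; equivalently I must solve $P(x,h)b = vb$, i.e. $P(x,h)=v$. Since $P(x,\cdot)$ is an involution on $\Ft$ (by the lemma preceding ``An orbit equivalence'', $P(x,P(x,g))=g$ for $x\in X$), the unique such $h$ is $h=P(x,v)$, and moreover one must check $h b$ is indeed the source of a $b$-edge of the right form — but $P(x,h)=P(x,P(x,v))=v$ again by the involution property, so the edge $(h,P(x,h)b)=(P(x,v)b,\,vb)$ has range $vb$, not $v$. Thus I should be careful: the edge with range $v$ has source $P(x, v b^{-1})\cdot$? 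Let me instead phrase it cleanly: $v\cdot b^{-1}$ is the source $u$ of the $b$-edge $(u,P(x,u)b)$ with $P(x,u)b=v$, i.e. $P(x,u)=vb^{-1}$, i.e. (by the involution) $u=P(x,vb^{-1})=P(x,\alpha_x(g)b^{-1})$, which is exactly the claimed third identity.

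The main obstacle is purely bookkeeping: making sure the involutivity of $P(x,\cdot)$ on $X$ is invoked correctly so that the $b$-edges of $\cN^\phi_x$ form a genuine actionable structure (each vertex is the source of exactly one and the range of exactly one $b$-edge), which is precisely what is guaranteed by the preceding lemma asserting $\cN^\phi_x$ is actionable for $x\in X$; given that, the three displayed equalities follow by the above reading of ``source/range of the unique $s$-labeled edge''. I would write the proof as: (i) recall $\alpha_x(gh)=\alpha_x(g)\cdot h$ in the network action; (ii) compute $v\cdot a^{\pm1}$ from item (1) of $\phi_x$ and induct; (iii) compute $v\cdot b$ from item (2); (iv) compute $v\cdot b^{-1}$ by solving $P(x,u)=vb^{-1}$ using the involution property of $P$. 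No serious estimate or limiting argument is needed.
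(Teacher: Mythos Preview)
Your proposal is correct and is precisely the definition-unwinding that the paper's one-line proof (``This is an easy exercise in understanding the definitions'') intends: you use the right-action property $\alpha_x(gh)=\alpha_x(g)\cdot h$, read off $v\cdot a^{\pm1}$ and $v\cdot b$ from items (1) and (2) of $\phi_x$, and invert the $b$-edge using the involutivity of $P(x,\cdot)$. The momentary slip in the $b^{-1}$ step (writing $P(x,h)b=vb$ instead of $P(x,h)b=v$) is caught and corrected in your next sentence, and the final computation $u=P(x,vb^{-1})$ is right.
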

\begin{proof}
This is an easy exercise  in understanding the definitions.
\end{proof}

 \begin{defn}
  Let $K$  be a set. For $x \in (K \times K)^\Ft$  define $x_1,x_2 \in K^\Ft$ by
 $$x(g):=\big(x_1(g),x_2(g)\big) ~\forall g\in \Ft.$$
  Also   for $i=1,2$, define $\pi_i^\Ft: (K \times K)^\Ft \to K^\Ft$  by $\pi_i^\Ft(x)=x_i$.
  \end{defn}

\begin{lem}\label{ lemma: coordinates }
Let $x\in X$ be in the support of $\kappa^{\Ft/\langle b\rangle} \times \kappa^\Ft$. Then for any $g\in \Ft$, $(\Omega x)_1(gb)=(\Omega x)_2(g)$.
\end{lem}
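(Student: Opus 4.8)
The plan is to unwind the definition of $\Omega x$ in terms of the modified network $\cN^\phi_x$ and use Lemma \ref{lem:alpha} together with the structure of the pairing map $P$. Recall that by definition $\Omega x(g) = x(\alpha_x(g))$, where $\alpha_x(g) = A(\cN^\phi_x:\rho_x,g)$, and the edge relabelling $\phi_x$ only modifies the $b$-edges: the $b$-edge emanating from a vertex $g$ now points to $P(x,g)b$ instead of $gb$. Writing coordinates via $x(h) = (x_1(h),x_2(h))$, I want to show $(\Omega x)_1(gb) = (\Omega x)_2(g)$, i.e. $x_1(\alpha_x(gb)) = x_2(\alpha_x(g))$.

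First I would apply Lemma \ref{lem:alpha} to get $\alpha_x(gb) = P(x,\alpha_x(g))b$. So writing $h := \alpha_x(g)$, the claim becomes $x_1(P(x,h)b) = x_2(h)$. Now I use the hypothesis that $x$ is in the support of $\kappa^{\Ft/\langle b\rangle}\times\kappa^\Ft$: this means that for any $f \in \Ft$, if $x(f) = (i,j)$ then $x(fb) = (i,k)$ for some $k$, i.e. the first coordinate $x_1$ is constant along $\langle b\rangle$-cosets. Hence $x_1(P(x,h)b) = x_1(P(x,h))$, and the claim reduces to $x_1(P(x,h)) = x_2(h)$.

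The remaining identity $x_1(P(x,h)) = x_2(h)$ is exactly the content of how $P$ pairs vertices: by the definition of $P(x,\cdot)$, if $x(h) = (i,i)$ then $P(x,h) = h$ and $x_1(P(x,h)) = i = x_2(h)$; if $x(h) = (i,j)$ with $i < j$ then $P(x,h) = ha^n$ is chosen so that $x(ha^n) = (j,i)$, hence $x_1(P(x,h)) = j = x_2(h)$; and the symmetric case $x(h) = (j,i)$ with $i<j$ gives $P(x,h) = ha^{-n}$ with $x(ha^{-n}) = (i,j)$, so $x_1(P(x,h)) = i = x_2(h)$. In all cases $x_1(P(x,h)) = x_2(h)$, which closes the argument.

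I do not expect a serious obstacle here; the only point requiring care is the bookkeeping in the first step — confirming that the $b$-edge out of $g$ in $\cN^\phi_x$ really does land at $P(x,\alpha_x(g))b$ and not at some other vertex — but this is precisely Lemma \ref{lem:alpha}, which is cited as available. The mild subtlety worth stating explicitly is the use of the support hypothesis to pass from $x_1(P(x,h)b)$ to $x_1(P(x,h))$, since without it the first coordinate need not be $b$-invariant.
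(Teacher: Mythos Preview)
Your proof is correct and follows essentially the same approach as the paper: set $h=\alpha_x(g)$, use Lemma \ref{lem:alpha} to get $\alpha_x(gb)=P(x,h)b$, invoke the support hypothesis to pass from $x_1(P(x,h)b)$ to $x_1(P(x,h))$, and then read off $x_1(P(x,h))=x_2(h)$ from the definition of $P$. The only cosmetic difference is that you spell out the three cases in the definition of $P$, whereas the paper compresses this to the single observation that $x(f)=(i,j)$ implies $x(P(x,f))=(j,i)$.
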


\begin{proof}
Since $x$ is in the support of $\kappa^{\Ft/\langle b\rangle} \times \kappa^\Ft$, for any $g\in \Ft$, $x_1(g)=x_1(gb)$. 


 Now fix $g \in \Ft$ and let $f \in \Ft$ be such that $A(\cN^\phi_x: \rho_x,g)=f \in \Ft$. By  the previous lemma, $A(\cN^\phi_x: \rho_x,gb) = P(x,f)b$. Thus $\Omega x(g)=x(f)$ and $\Omega x(gb) = x \big(P(x,f)b \big)$. 
 

If $x(f)=(i,j)$ then by definition of $P$, $x \big(P(x,f) \big)=(j,i)$. Since $x$ is in the support of  $\kappa^{\Ft/\langle b\rangle} \times \kappa^\Ft$, $x \big(P(x,f)b \big)=(j,k)$ for some $k$. So $(\Omega x)_2(g)=x_2(f)=j=x_1\big(P(x,f)b\big)=(\Omega x)_1(gb)$. This proves the lemma.
\end{proof}

\begin{defn}
The right-Cayley graph $\Gamma$ of $\Ft$ is the graph with vertex set $\Ft$ and edges $\{(g,gs):~s\in \cS\}$. If $W \subset \Ft$ then the induced subgraph of $W$ is the largest subgraph of $\Gamma$ with vertex set $W$. If this subgraph is connected then we say that $W$ is {\em right-connected}.
\end{defn}


Given a measurable function $f:X\to Y$, the $\sigma$-algebra that it induces on $X$, denoted $\Sigma(f)$, is the pullback $f^{-1}(\Sigma_Y)$ where $\Sigma_Y$ is the $\sigma$-algebra on $Y$.  We will say that a function  $f_1$ is {\em determined by} a function $f_2$ if the sigma algebra induced by $f_1$ is contained in the sigma algebra induced by $f_2$  up to sets of $\kappa^{\Ft/\langle b\rangle} \times \kappa^\Ft$ measure zero. 

Often it will be that we have to consider a function $f(x,i)$ that depends on two arguments $x$ and $i$. This can be considered as a function of $x$ with range a function of $i$. Thus we will write $x \mapsto \big[f(x,i)\big]_{i \in I}$ to mean $x \mapsto \Big( i \in I \mapsto f(x,i) \Big)$. We will also write this as $x\mapsto \big[ f(x,i):~i\in I\big]$.






 \begin{lem}
Suppose that $W \subset \Ft$ is a right-connected set such that $Wa = W$ and $e\in W$. Then for any $v\in W$, the function $x\mapsto \alpha_x(v)$ is determined by the function $x\mapsto \big(\Omega x(w)\big)_{w\in W}$. Similarly, $x\mapsto P \big(x,\alpha_x(v) \big)$ is determined by  $x\mapsto \big((\Omega x)(w)\big)_{w\in W}$.
\end{lem}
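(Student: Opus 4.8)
The plan is to induct on the structure of the right-connected set $W$, building it up one vertex at a time while maintaining the hypothesis $Wa=W$. Since $Wa=W$ and $W$ is right-connected with $e\in W$, the set $W$ is a union of $\langle a\rangle$-cosets, and right-connectedness means these cosets are joined by $b$-edges into a connected pattern; so $W$ is obtained from the single coset $\langle a\rangle$ by successively adjoining cosets $gb^{\pm1}\langle a\rangle$ attached by a $b$-edge to a coset already present. The base case is $W=\langle a\rangle$: here I must show that the sequence $\big(\Omega x(a^n)\big)_{n\in\Z}$ determines $\alpha_x(a^n)$ for every $n$, and also determines $P\big(x,\alpha_x(a^n)\big)$. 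By Lemma~\ref{lem:alpha}, $\alpha_x(a^n)=\alpha_x(e)a^n$, and $\alpha_x(e)=A(\cN^\phi_x:\rho_x,e)=\rho_x=e$, so in fact $\alpha_x(a^n)=a^n$ is determined outright (no data needed). For $P\big(x,\alpha_x(a^n)\big)=P(x,a^n)$: recall $\Omega x(a^n)=x(a^n)$ and, by Lemma~\ref{ lemma: coordinates }, $(\Omega x)_1(a^nb)=(\Omega x)_2(a^n)$; more to the point, $P(x,a^n)$ is defined purely in terms of the values $x(a^m)=\Omega x(a^m)$ for $m$ in the relevant interval between $n$ and the matching index, via the counting conditions in the definition of $P$. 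So knowing $\big(\Omega x(a^m)\big)_{m\in\Z}$ suffices to compute $P(x,a^n)$, which lies in $\langle a\rangle$.

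The inductive step is where the real work is. Suppose the claim holds for a right-connected $W_0$ with $W_0a=W_0$, $e\in W_0$, and let $W=W_0\cup g_0b^{\varepsilon}\langle a\rangle$ where $\varepsilon\in\{+1,-1\}$ and $g_0\in W_0$ with $g_0b^\varepsilon\in W\setminus W_0$. Fix $v\in g_0b^\varepsilon\langle a\rangle$, say $v=g_0b^\varepsilon a^n$. Using Lemma~\ref{lem:alpha} twice: $\alpha_x(v)=\alpha_x(g_0b^\varepsilon)a^n$, and $\alpha_x(g_0b)=P\big(x,\alpha_x(g_0)\big)b$ while $\alpha_x(g_0b^{-1})=P\big(x,\alpha_x(g_0)b^{-1}\big)$. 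By the inductive hypothesis applied to $g_0\in W_0$, both $\alpha_x(g_0)$ and $P\big(x,\alpha_x(g_0)\big)$ are determined by $\big(\Omega x(w)\big)_{w\in W_0}$ — and $P\big(x,\alpha_x(g_0)b^{-1}\big)$ is obtained from $\alpha_x(g_0)$ by one more application of the inductive data (it equals $P$ of a point adjacent to $\alpha_x(g_0)$; I will need the mild extra observation that $x\mapsto P\big(x,\alpha_x(g_0)b^{-1}\big)$ is likewise determined, which follows because $\alpha_x(g_0)b^{-1}\in\alpha_x(g_0b^{-1}\langle a\rangle)$-type reasoning already covered once that coset is in $W$ — or, more cleanly, because $P(x,hb^{-1})$ only uses the $x$-values along $hb^{-1}\langle a\rangle$, and the value $x(hb^{-1})=x(h)$ at the relevant spots is recoverable). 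Hence $\alpha_x(g_0b^\varepsilon)$ is determined by $\big(\Omega x(w)\big)_{w\in W_0}\subset\big(\Omega x(w)\big)_{w\in W}$, and therefore so is $\alpha_x(v)=\alpha_x(g_0b^\varepsilon)a^n$ for all $n$.

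It remains to show $x\mapsto P\big(x,\alpha_x(v)\big)$ is determined by $\big(\Omega x(w)\big)_{w\in W}$ for $v$ in the new coset. Write $\alpha_x(v)=f$; then $P(x,f)$ depends only on the values $x(fa^m)$ for $m$ ranging over an interval in $\Z$ (the one appearing in the definition of $P$). Now the crucial point is that $\{fa^m:m\in\Z\}=\alpha_x(v)\langle a\rangle=\alpha_x\big(g_0b^\varepsilon\langle a\rangle\big)=\{\alpha_x(g_0b^\varepsilon a^m):m\in\Z\}$, using Lemma~\ref{lem:alpha} again, and $x(\alpha_x(g_0b^\varepsilon a^m))=\Omega x(g_0b^\varepsilon a^m)$ by definition of $\Omega$, with $g_0b^\varepsilon a^m\in W$. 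So every $x$-value needed to evaluate $P(x,f)$ is of the form $\Omega x(w)$ with $w\in W$; combined with the already-established determinacy of $\alpha_x(g_0b^\varepsilon)$ (which tells us \emph{which} group element $f$ is, hence which interval of indices to inspect), this shows $P\big(x,\alpha_x(v)\big)$ is determined by $\big(\Omega x(w)\big)_{w\in W}$. By induction, both conclusions hold for every right-connected $W$ with $Wa=W$ and $e\in W$, completing the proof.

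\textbf{Main obstacle.} The delicate part is bookkeeping the two-step dependence in the inductive step — specifically verifying that $P\big(x,\alpha_x(g_0)b^{-1}\big)$ is controlled by the inductive data without circularity, and confirming that the index-interval used by $P$ genuinely stays inside the coset $\alpha_x(v)\langle a\rangle$ so that all sampled $x$-values are of the form $\Omega x(w)$ with $w\in W$. Both reduce to careful unwinding of Lemma~\ref{lem:alpha} and the definition of $P$, but they are the steps most prone to a subtle gap, so I would write them out in full rather than leaving them as exercises.
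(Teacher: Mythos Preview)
Your inductive structure matches the paper's exactly: reduce the $P$-claim to the $\alpha$-claim along each new coset, start from $W=\langle a\rangle$, and adjoin one coset $gb^{\pm 1}\langle a\rangle$ at a time using Lemma~\ref{lem:alpha}. The $\varepsilon=+1$ step and the argument that $P\big(x,\alpha_x(v)\big)$ is determined once $\alpha_x(v)$ is are both fine and essentially identical to the paper's.

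The gap is in the $\varepsilon=-1$ step, precisely where you flag it. Neither of your two proposed fixes works. Your option~1 is circular: you know the values $\big(\Omega x(g_0b^{-1}a^m)\big)_{m\in\Z}=\big(x(\alpha_x(g_0b^{-1})a^m)\big)_{m\in\Z}$, which is the sequence of $x$-values along the coset $\alpha_x(g_0)b^{-1}\langle a\rangle$, but indexed from the unknown basepoint $\alpha_x(g_0b^{-1})$ rather than from $\alpha_x(g_0)b^{-1}$. To evaluate $P\big(x,\alpha_x(g_0)b^{-1}\big)$ you must know where $\alpha_x(g_0)b^{-1}$ sits in that sequence, i.e.\ the shift between the two basepoints --- and that shift \emph{is} $\alpha_x(g_0b^{-1})$ up to the known factor $\alpha_x(g_0)b^{-1}$. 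Your option~2 rests on ``$x(hb^{-1})=x(h)$'', which is false: on the support of $\kappa^{\Ft/\langle b\rangle}\times\kappa^\Ft$ one only has $x_1(hb^{-1})=x_1(h)$, and in any case a single coordinate at a single point is far short of the entire indexed sequence $P$ needs.

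The paper breaks the circularity with the involution $P\big(x,P(x,f)\big)=f$. Since $\alpha_x(g_0b^{-1})=P\big(x,\alpha_x(g_0)b^{-1}\big)$, applying $P$ again gives $P\big(x,\alpha_x(g_0b^{-1})\big)=\alpha_x(g_0)b^{-1}$, hence
\[
\big[\alpha_x(g_0)b^{-1}\big]^{-1}\,\alpha_x(g_0b^{-1})
=\Big[\alpha_x(g_0b^{-1})^{-1}\,P\big(x,\alpha_x(g_0b^{-1})\big)\Big]^{-1}.
\]
The right-hand side is the relative displacement of $P$ computed from the basepoint $\alpha_x(g_0b^{-1})$; by definition of $P$ this depends only on the sequence $\big(x(\alpha_x(g_0b^{-1})a^m)\big)_{m\in\Z}=\big(\Omega x(g_0b^{-1}a^m)\big)_{m\in\Z}$, which you have with the correct indexing. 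Combined with $\alpha_x(g_0)$ from the inductive hypothesis, this pins down $\alpha_x(g_0b^{-1})$. This is the step you need to replace your two sketches with.
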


\begin{proof}
By definition of $P$, for any fixed $v\in W$, $x\mapsto \alpha_x(v)^{-1}P \big(x,\alpha_x(v) \big)$ is determined by the function $x \mapsto \big[x \big(\alpha_x(v)a^n \big)\big]_{n \in \Z} = \big[\Omega x(va^n)\big]_{n\in \Z}$. Thus the second statement follows from the first.


Since, for any fixed $n\in \Z$, $\alpha_x(a^n)=a^n$ and $x\mapsto P(x,a^n)$ is determined by $x\mapsto \{x(a^m):~m \in \Z\}$, the lemma is true if $W=\{a^n:~n\in \Z\}$. Suppose, for induction, that the lemma is true for a given set right-connected set $W$ with $e\in W$ and $Wa=W$. Let $g\in W$. It suffices to prove that the lemma is true for $W \cup gb\langle a\rangle$ and $W \cup gb^{-1}\langle a\rangle$.

By lemma \ref{lem:alpha}, $\alpha_x(gba^n)=P \big(x,\alpha_x(g) \big)ba^n$ (for any $x\in X, n \in Z$). By induction, $x \mapsto P \big(x,\alpha_x(g) \big)$ is determined  by $x\mapsto \big[(\Omega x)(w)\big]_{w\in W}$. So for any $n$, $x\mapsto \alpha_x(gba^n)$ is determined  $x\mapsto \big[(\Omega x)(w)\big]_{w\in W}$. This proves that the lemma is true for $W \cup gb\langle a\rangle$.

By lemma \ref{lem:alpha}, $\alpha_x(gb^{-1})= P \big(x,\alpha_x(g)b^{-1} \big)$ (for any $x\in X$). Since $P(x,P(x,f))=f$ for any $f$, $P \big(x, \alpha_x(gb^{-1}) \big) = \alpha_x(g)b^{-1}$. So,
\begin{eqnarray*}\label{eqn:a}
\big[ \alpha_x(g)b^{-1}  \big]^{-1} P \big(x,\alpha_x(g)b^{-1} \big) = \Big[ \alpha_x(gb^{-1})^{-1} P \big(x, \alpha_x(gb^{-1}) \big)  \Big]^{-1}. 
\end{eqnarray*}

Since $x\mapsto  \alpha_x(gb^{-1})^{-1} P \big(x, \alpha_x(gb^{-1}) \big) $ is determined by $x\mapsto \big[\big(\alpha_x(gb^{-1})a^n \big)\big]_{n\in \Z} = \big[\Omega x(gb^{-1}a^n)\big]_{n\in \Z}$, it follows that $ x\mapsto \big[ \alpha_x(g)b^{-1}  \big]^{-1} P \big(x,\alpha_x(g)b^{-1} \big)$  is determined  by $x\mapsto \big[(\Omega x)(gb^{-1}a^n):~n\in \Z\big]$. The induction hypothesis implies that $x \mapsto \alpha_x(g)$ is determined  by $x\mapsto \big[(\Omega x)(w)\big]_{w\in W}$ from which it now follows that $x\mapsto P \big(x,\alpha_x(g)b^{-1} \big)=\alpha_x(gb^{-1})$ is determined by $x\mapsto \big[(\Omega x)(w):~w\in W\cup gb^{-1}\langle a\rangle \big]$. Since $\alpha_x(gb^{-1}a^n)=\alpha_x(gb^{-1})a^n$, the lemma is true for $W \cup gb^{-1}\langle a\rangle$. This completes the induction step and hence the lemma.







\end{proof}

\begin{prop}
 $(\pi_2^\Ft \Omega)_*\big( \kappa^{\Ft/\langle b\rangle} \times \kappa^\Ft\big) = \kappa^\Ft$.
 \end{prop}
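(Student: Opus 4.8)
The plan is to show that the map $\pi_2^\Ft \Omega$ pushes $\kappa^{\Ft/\langle b\rangle} \times \kappa^\Ft$ forward to $\kappa^\Ft$ by exhibiting, for every finite right-connected set $W \subset \Ft$ containing $e$ with $Wa = W$, that the random variables $\big[(\pi_2^\Ft\Omega x)(w)\big]_{w\in W}$ are i.i.d. uniform on $\N$ (i.e. distributed as $\kappa^W$). Since such sets $W$ exhaust $\Ft$, this determines the law of $\pi_2^\Ft\Omega$ on all finite-dimensional marginals and hence identifies it with $\kappa^\Ft$. By Lemma~\ref{ lemma: coordinates }, $(\Omega x)_1(gb) = (\Omega x)_2(g)$ on the support, so in fact knowing $[(\Omega x)_2(w)]_{w\in W}$ together with the first coordinates of the ``new'' $b$-neighbors is the same data, and it will be cleaner to prove the stronger statement that $x \mapsto \big[\Omega x(w)\big]_{w\in W}$ (the full $\N\times\N$-valued data) has a law which, after projecting to second coordinates, is $\kappa^W$; the key will be to build $W$ up one generator at a time.

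First I would set up the induction on right-connected $W$ with $e \in W$, $Wa = W$, exactly mirroring the induction in the preceding lemma. For the base case $W = \langle a\rangle$: here $\alpha_x(a^n) = a^n$, so $\Omega x(a^n) = x(a^n)$, and the second coordinates $(x_2(a^n))_{n\in\Z}$ are i.i.d.\ uniform because $\kappa^\Ft$ restricted to the $\langle a\rangle$-coset is a product of uniforms. For the inductive step, suppose the claim holds for $W$ and let $g \in W$; I must handle $W \cup gb\langle a\rangle$ and $W \cup gb^{-1}\langle a\rangle$. By Lemma~\ref{lem:alpha} the new vertices $\alpha_x(gba^n) = P(x,\alpha_x(g))ba^n$ lie on the $\langle a\rangle$-coset of $P(x,\alpha_x(g))b$, i.e.\ on a coset reached by a single $b$-edge from a vertex $\alpha_x(g)$ already ``seen.'' The crucial point is that, conditioned on all the data already revealed (the labels $x$ restricted to $\bigcup_{w\in W}\alpha_x(w)\langle a\rangle$ and the $\langle b\rangle$-cosets touched there), the second coordinates $x_2$ on the fresh coset $P(x,\alpha_x(g))\langle b\rangle a^*$ are still i.i.d.\ uniform and independent of the past: this is because $x_2 = \pi_2^\Ft x$ is an i.i.d.\ uniform field under $\kappa^\Ft$ that is independent of the first-coordinate field $x_1$ (which only depends on the $\Ft/\langle b\rangle$-data), and the coset in question has not been examined before. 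One must check that the stopping-time nature of $P$ — $P(x,f)$ is defined by a ``first return'' condition along $fa^\Z$ — does not spoil this: the vertex $P(x,\alpha_x(g))$ is a measurable function of the already-revealed coset $\alpha_x(g)\langle a\rangle$ (via the bracket matching, which reads only first coordinates there), so moving to its $b$-coset reveals a genuinely new, hence independent, block of uniforms. The $gb^{-1}$ case is symmetric, using $P(x,P(x,f)) = f$ as in the lemma.

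The main obstacle I anticipate is the bookkeeping of \emph{which} portion of the orbit has already been revealed and verifying that the newly added $\langle a\rangle$-coset is disjoint from it — equivalently, that the map $\alpha_x$ is injective on $W \cup gb^{\pm1}\langle a\rangle$ and its image there is a new coset. Injectivity of $\alpha_x$ follows because $\phi_x$ is injective and $\cN_x^\phi$ is actionable (so $A(\cN_x^\phi:\rho_x,\cdot)$ is a bijection $\Ft \to V_x = \Ft$); that the image coset $P(x,\alpha_x(g))\langle a\rangle$ differs from all previously used cosets $\alpha_x(w)\langle a\rangle$, $w\in W$, is where right-connectedness of $W$ and $Wa=W$ are used — the new $b$-edge leaves the ``$a$-saturation'' of the part of the Cayley graph explored so far. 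Once disjointness is in hand, the conditional-independence argument is a routine application of the product structure of $\kappa^\Ft$ and the fact that $x_2$ is independent of $x_1$; the measure-preservation $(\pi_2^\Ft\Omega)_*(\kappa^{\Ft/\langle b\rangle}\times\kappa^\Ft) = \kappa^\Ft$ then follows by a standard $\pi$-system/monotone-class argument on cylinder sets indexed by these exhausting sets $W$.
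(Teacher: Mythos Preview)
Your proposal is correct and follows essentially the same inductive route as the paper: build up right-connected sets $W$ with $e\in W$ and $Wa=W$ one $b^{\pm 1}$-coset at a time, using the preceding lemma to locate the fresh $\langle a\rangle$-coset $P(x,\alpha_x(g))b\langle a\rangle$ (resp.\ $\alpha_x(g)b^{-1}\langle a\rangle$) and arguing it is disjoint from $\{\alpha_x(w):w\in W\}$, so the second-coordinate labels there are fresh i.i.d.\ $\kappa$-variables independent of the data already seen. One minor slip: the pairing $P$ reads \emph{both} coordinates of $x$ (via the trichotomy $i<j$, $i=j$, $i>j$ on $x(g)=(i,j)$), not only the first --- but this does not affect your argument, since $P(x,\alpha_x(g))$ is still determined by $[\Omega x(ga^n)]_{n\in\Z}$, which is contained in the revealed data $[\Omega x(w)]_{w\in W}$.
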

 
 \begin{proof}
 
  Let $x\in X$ be a random variable with law $\kappa^{\Ft/\langle b\rangle} \times \kappa^\Ft$.  By shift-invariance, it suffices to show that $\{(\Omega x)_2(g)\}_{g\in \Ft}$ is a collection of i.i.d. random variables. 

 Since $\Omega x(a^n)=x(a^n)$ for any $n \in \Z$, the variables $\{(\Omega x)_2(a^n)\}_{n \in \Z}$ are independent identically distributed (i.i.d.) each with law $\kappa$. Suppose, for induction, that $W \subset \Ft$ is a right-connected set such that $Wa = W$, $e\in W$ and $\{(\Omega x)_2(w)\}_{w \in W}$ is an i.i.d. collection. We will show that for any $g\in W$:
\begin{enumerate}
\item if $V=W \cup gb\langle a\rangle$ then $\{(\Omega x)_2(v)\}_{v \in V}$ is an i.i.d. collection; 
 \item if $V=W \cup gb^{-1}\langle a\rangle$ then $\{(\Omega x)_2(v)\}_{v \in V}$ is an i.i.d. collection. 
\end{enumerate} 
By induction, this will prove the proposition. 

Recall that two  measurable functions $f_1, f_2$  with domain $X$  are  independent if for every pair of sets $A_1,A_2$ such that $A_i$ is in the  sigma algebra induced by $f_i$ ($i=1,2$), $$ \big(\kappa^{\Ft/\langle b\rangle} \times \kappa^\Ft \big)(A_1 \cap A_2) =  \big(\kappa^{\Ft/\langle b\rangle} \times \kappa^\Ft\big)(A_1) \big(\kappa^{\Ft/\langle b\rangle} \times \kappa^\Ft\big)(A_2).$$

To prove item (1.), we may assume that $gb \notin W$ since otherwise $W=V$ and item (1.) is trivial. By the previous lemma, $x\mapsto \alpha_x(g)$ and $x\mapsto P \big(x,\alpha_x(g) \big)$ are determined by $x\mapsto \big[(\Omega x)(w)\big]_{w\in W}$. The function 
$$x\mapsto \Big[x_2 \Big( P \big(x,\alpha_x(g) \big)ba^n \Big)\Big]_{n\in \Z}=\big[(\Omega x)_2(gba^n)\big]_{n\in \Z}$$
 is independent of $\big[(\Omega x)(w)\big]_{w\in W}$ because the set $ \Big\{P \big(x,\alpha_x(g) \big)ba^n:~n\in \Z \Big\}$ is disjoint from the set $\{\alpha_x(w):~ w \in W\}$ and $x\mapsto P \big(x,\alpha_x(g) \big)$ is determined by $ x\mapsto \big[x \big(\alpha_x(w) \big) \big]_{w\in W}$. The induction hypothesis now implies item (1.).

To prove item (2.), we may assume that $gb^{-1} \notin W$ since otherwise $W=V$ and item (2.) is trivial. Note 
 \begin{eqnarray*}
P \big(x,\alpha_x(g)b^{-1} \big) &=& \alpha_x(g)b^{-1} [\alpha_x(g)b^{-1} ]^{-1} P \big(x,\alpha_x(g)b^{-1} \big).
\end{eqnarray*}
By the previous lemma, $x\mapsto \alpha_x(g)$ is determined by $x\mapsto \big[(\Omega x)(w)\big]_{w\in W}$. By definition of $P$, 
$$[\alpha_x(g)b^{-1} ]^{-1} P \big(x,\alpha_x(g)b^{-1} \big)$$
is determined by the function $x\mapsto  \big[\alpha_x(g)b^{-1}a^n \big]_{n\in \Z} $ which is independent of $x\mapsto \big[(\Omega x)_2(w)\big]_{w\in W}$ since  $ \Big\{\alpha_x(g)b^{-1}a^n:~n\in \Z \Big\}$ is disjoint from $\{\alpha_x(w):~ w \in W\}$. Therefore the function 
$$x \mapsto x_2 \Big[ P \big(x,\alpha_x(g) b^{-1}\big)a^n \Big]_{n\in \Z}=\big[(\Omega x)_2(gb^{-1}a^n)\big]_{n\in\Z}$$
 is independent of $x\mapsto \big[(\Omega x)(w)\big]_{w\in W}$. This uses again the fact that $\Big\{P \big(x,\alpha_x(g)b^{-1} \big)a^n:~n\in \Z \Big\}$ is disjoint from the set $\{\alpha_x(w):~ w \in W\}$. The induction hypothesis now implies item (2.).


 \end{proof}

We can now prove theorem \ref{thm:1}.

\begin{proof}[Proof of theorem \ref{thm:1}]
It follows from lemma \ref{ lemma: coordinates } that the map $\pi_2^\Ft: \supp(\Omega_*\kappa^{\Ft/\langle b\rangle} \times \kappa^\Ft) \to \N^\Ft$ is invertible. It follows from the previous proposition that this map is a measure-conjugacy from the shift-action $\Ft \cc \big( (\N \times \N)^\Ft, \Omega_* (\kappa^{\Ft/\langle b\rangle} \times \kappa^\Ft) \big)$ to the shift-action  $\Ft \cc (\N^\Ft, \kappa^\Ft)$.  By  lemma \ref{ lemma: orbit equivalence }, $\Omega$ is an OE from $\Ft \cc  \big( (\N \times \N)^\Ft, \kappa^{\Ft/\langle b\rangle} \times \kappa^\Ft \big)$ to
  $\Ft \cc  \big( (\N \times \N)^\Ft, \Omega_* (\kappa^{\Ft/\langle b\rangle} \times \kappa^\Ft) \big)$.
\end{proof}  

\section{Theorem \ref{thm:2}}\label{ section: theorem two }
As in the statement of theorem \ref{thm:2}, let $K$ be a finite set with $|K|\ge 2$. We will assume that there are two elements $0,1$ such that $1 \in K$ but $0 \notin K$. These elements will later be related to the generators $\{a,b\}$ of $\Ft$.

\subsection{The pairings}
To begin the proof of theorem \ref{thm:2}, we define a set of maps that will play the role of the brackets of the sketch in \S \ref{sketch2}. For $x \in (K\times K)^\Ft$, define $x_1 \in K^\Ft$ and $x_2 \in K^\Ft$ as in the previous  section. So, $x(g)=\big(x_1(g),x_2(g)\big)$ for any $g\in\Ft$. For $g\in \Ft$ and $k\in K$ define $P_k(x,g) \in \Ft$ as follows.
\begin{itemize}
\item If $k=1$, then $P_k(x,g):=g$.
\item If $x_1(g)\notin  \{1,k\}$ then $P_k(x,g):=g$.
\item If $k\ne 1$ and $x_1(g)=1$ then let $P_k(x,g)=ga^n$ where $n>0$ is the smallest number such that 
\begin{enumerate}
\item $x_1(ga^n)=k$,
\item $ \big|\{m \in \Z \cap [0,n]:~ x_1(ga^m)=1\}\big| =  \big|\{m \in \Z\cap [0,n]:~ x_1(ga^m)=k\} \big|$.
\end{enumerate}
\item  If $k\ne 1$ and $x_1(g)=k$ then let $P_k(x,g)=ga^{-n}$ where $n>0$ is the smallest number such that 
\begin{enumerate}
\item $x_1(ga^{-n})=1$,
\item $ \big|\{m \in \Z\cap [0,n]:~ x_1(ga^{-m})=1\} \big| =  \big|\{m \in \Z\cap [0,n]:~ x_1(ga^{-m})=k\} \big|$.
\end{enumerate}
\end{itemize}
A-priori, $P_k(x,g)$ may not be well-defined since there might not exist a number $n$ satisfying the above conditions. However, we have:
\begin{lem}
Let $X$ be the set of all $x \in (K\times K)^\Ft$ such that for all $g\in \Ft$ and all $k\in K$ $P_k(x,g)$ is well-defined. Then $\kappa^{\Ft/\langle b\rangle} \times \kappa^\Ft(X)=1$. Moreover, $P_k \big(x,P_k(x,g) \big)=g$  for any $x\in X$  and $g\in \Ft$.
\end{lem}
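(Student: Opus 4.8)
The plan is to mimic the proof of the analogous lemma for $P$: well-definedness reduces to recurrence of a simple symmetric random walk, and the identity $P_k(x,P_k(x,g))=g$ is the uniqueness of a matched-bracket pairing. Throughout I use that $P_k$ depends only on the first coordinate $x_1$, and that under $\kappa^{\Ft/\langle b\rangle}\times\kappa^\Ft$ the first coordinate $x_1$ factors through $\Ft/\langle b\rangle$ with law $\kappa^{\Ft/\langle b\rangle}$.

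First I note that the only case requiring work is $k\neq 1$ with $x_1(g)\in\{1,k\}$; otherwise $P_k(x,g)=g$ by fiat and both claims are trivial. So fix such $g$ and $k$ and assume $x_1(g)=1$ (the case $x_1(g)=k$ is the mirror image, with $a$ replaced throughout by $a^{-1}$). Because the cosets $ga^m\langle b\rangle$, $m\in\Z$, are pairwise distinct in $\Ft$, the random variables $\big(x_1(ga^m)\big)_{m\in\Z}$ are i.i.d.\ with law $\kappa$. Consider the signed-count process $W(n)=\big|\{m\in\Z\cap[0,n]:x_1(ga^m)=1\}\big|-\big|\{m\in\Z\cap[0,n]:x_1(ga^m)=k\}\big|$ for $n\geq 0$, so $W(0)=1$. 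It increments by $+1$ at sites where $x_1(ga^m)=1$, by $-1$ where $x_1(ga^m)=k$, and does not move elsewhere; since $\kappa$ is uniform, a nonzero increment is $+1$ or $-1$ with probability $1/2$ each, and since $\kappa(\{1,k\})=2/|K|>0$ there are a.s.\ infinitely many nonzero increments. Hence, read along its moving times, $W$ is a simple symmetric random walk started at $1$, which is recurrent, so a.s.\ $W$ returns to $0$. If $n>0$ is the first such time then $W(n-1)\geq 1$ forces the $n$-th increment to be $-1$, i.e.\ $x_1(ga^n)=k$ and $W(n-1)=1$; thus this $n$ is exactly the least positive integer meeting conditions (1) and (2), so $P_k(x,g)=ga^n$ is well-defined. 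Intersecting over the countably many pairs $(g,k)\in\Ft\times K$ gives $\kappa^{\Ft/\langle b\rangle}\times\kappa^\Ft(X)=1$.

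For the involution identity, again only $x_1(g)=1$, $k\neq 1$ needs checking. Write $h=P_k(x,g)=ga^n$, so $x_1(h)=k$, $W(n)=0$, and $W(m)\geq 1$ for $0\leq m<n$. Since $x_1(h)=k$, the value $P_k(x,h)$ is computed by the fourth clause of the definition (the $a^{-1}$ case): it equals $ha^{-m_0}$, where $m_0>0$ is least with $x_1(ha^{-m_0})=1$ and the balancing condition on $[0,m_0]$. Substituting $ha^{-j}=ga^{n-j}$, a short index computation shows the relevant signed-count process $W'$ satisfies $W'(m)=-W(n-m-1)$ for $0<m<n$ and $W'(n)=W(n)=0$; since $W(n-m-1)\geq 1$ for $0<m<n$, the first zero of $W'$ after $0$ occurs at $m=n$, so $P_k(x,h)=ha^{-n}=g$. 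This is just the statement that the open bracket at $g$ and its matching close bracket at $h$ determine one another, the displayed identity being the only bookkeeping involved.

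I do not anticipate a genuine obstacle: everything here is routine and the paper flags it as such. The two points to watch are justifying the i.i.d.\ structure of $\big(x_1(ga^m)\big)_m$ from the fact that $x_1$ factors through $\Ft/\langle b\rangle$, and keeping the index shifts correct when running the mirror-image argument in the case $x_1(g)=k$.
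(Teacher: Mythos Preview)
Your proposal is correct and is exactly the argument the paper has in mind; the paper's own proof reads in its entirety ``This is an easy exercise left to the reader,'' and your random-walk recurrence for well-definedness together with the bracket-matching computation for the involution is the intended exercise (compare the hint in the analogous lemma for $P$ in \S3.1, where shift-invariance plus the symmetry $\mu(\{x:x(e)=(i,j)\})=\mu(\{x:x(e)=(j,i)\})$ is singled out as the key point).
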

\begin{proof}
This is an easy exercise left to the reader.
\end{proof}
In this section, $X$ will denote the set defined above. It is not the same as the set $X$ defined in the previous section of which we will have no further use.



\subsection{A stable orbit morphism}\label{ section: a stable orbit morphism}

Let $Y=\{x \in X:~ x_1(e)=1\}$. For $y \in Y$, let $\cN_y:=(V_y,E_y,\cL_y,\cG_y,\rho_y)$ be  the rooted network induced by $y$  and $\cS=\{a,b\}$  as in \S \ref{ section: rooted networks from group actions }. Define $\cN^\phi_y=(V^\phi_y,E^\phi_y,\cL^\phi_y,\cG^\phi_y,\rho_y)$ by
\begin{itemize}
\item $V^\phi_y = \{g \in V_y=\Ft:~ y_1(g)=1\}$.
\item $\cL^\phi_y(g)= \big( (i_0,j_0), \ldots, (i_n,j_n)  \big)$ where for $0\le m \le n$, $y(ga^m)=(i_m,j_m)$ and $n\ge 0$ is the smallest number such that $y_1(ga^{n+1})=1$.
\item $\cG^\phi_y$ maps $E^\phi_y$ into $K \sqcup \{0\}$.
\item $E^\phi_y$ contains all edges of the form $(g,ga^n) \in V^\phi_y\times V^\phi_y$ where $g\in \Ft$  is any element with $y_1(g)=1$ and $n>0$ is the smallest number such that $y_1(ga^n)=1$. For any such edge define $\cG^\phi_y(g,ga^n):=0$.
\item  $E^\phi_y$ contains all edges of the form $\big(P_k(y,g),P_k(y,gb) \big)\in V^\phi_y\times V^\phi_y$ where $g \in \Ft$ is any element with $y_1(g)=k$. For any such edge define  $\cG^\phi_y \Big( \big(P_k(y,g),P_k(y,gb) \big)  \Big)=k$. For use later, define $\phi_y(g,gb):= \big(P_k(y,g),P_k(y,gb) \big)$.
\item The root $\rho_y$ is the identity element in $\Ft$.
\end{itemize}
 Warning: do not get $\cN^\phi_y$  confused with $\cN^\phi_x$  as defined in section \S \ref{ section: theorem one }.  They are completely different.  We will not need  the latter in this section.

\begin{lem}
For any $y\in Y$, $(V^\phi_y,E^\phi_y)$ is a tree.
\end{lem}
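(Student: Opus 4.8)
The plan is to show that the graph $(V^\phi_y, E^\phi_y)$ is connected and acyclic. For connectedness, I would argue that every vertex of $V^\phi_y$ can be reached from the root $\rho_y=e$. Given $g\in\Ft$ with $y_1(g)=1$, write $g$ as a reduced word $s_1s_2\cdots s_m$ in $\{a^{\pm 1},b^{\pm 1}\}$ and induct on $m$. The point is that the $0$-labeled edges connect any vertex with $y_1=1$ to its neighbors along the $\langle a\rangle$-direction that also have $y_1=1$, while the $k$-labeled edge $\phi_y(h,hb)=(P_k(y,h),P_k(y,hb))$ lets one move ``across a $b$-step'' from the vertex $P_k(y,h)$ to $P_k(y,hb)$; since $P_k(y,h)$ and $h$ lie on the same $\langle a\rangle$-coset (and likewise $P_k(y,hb)$ and $hb$), following a path in $\Ft$ from $e$ to $g$ and at each step either sliding along $a$-edges (using $0$-edges) or jumping a $b$-edge (using a $k$-edge) produces a path in $E^\phi_y$ from $e$ to $g$. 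One should be slightly careful that the intermediate vertices produced really do lie in $V^\phi_y$, but this follows since both endpoints of every edge in $E^\phi_y$ have $y_1$-value $1$ by construction.

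For acyclicity, the cleanest route is a counting / ``retraction to $\Ft$'' argument. Define a map $\Pi: V^\phi_y \to \Ft$ sending each vertex to itself (it already sits inside $\Ft$), and observe that each edge of $E^\phi_y$ projects to a path in the Cayley graph of $\Ft$: a $0$-edge $(g,ga^n)$ projects to the $a$-path of length $n$ from $g$ to $ga^n$, and a $k$-edge $(P_k(y,g),P_k(y,gb))$ projects to the concatenation of an $a$-path from $P_k(y,g)$ to $g$, the single $b$-edge from $g$ to $gb$, and an $a$-path from $gb$ to $P_k(y,gb)$. The key structural fact is that the $b$-edges of $\Ft$ used by distinct $k$-edges of $E^\phi_y$ are all distinct: this is exactly the content of the pairing being an involution, $P_k(y,P_k(y,g))=g$, together with the fact that each $g$ with $y_1(g)=k\neq 1$ is hit by exactly one pairing. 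Hence a cycle in $(V^\phi_y,E^\phi_y)$ would project to a closed path in the tree $\Ft$ that traverses each $b$-edge it uses an even number of times but in fact, by the distinctness, each used $b$-edge is traversed exactly twice and in opposite directions — so after cancelling we would get a nontrivial cycle built only from $a$-edges and $0$-edges, which is impossible since along a single $\langle a\rangle$-coset the $0$-edges form a path (a subgraph of $\Z$, hence a forest), contradicting the existence of a cycle.

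Concretely I would organize it as: (i) prove connectedness by the induction-on-word-length argument above; (ii) show each vertex with $y_1=1$ has exactly one incoming and one outgoing $0$-edge, hence the $0$-edges alone decompose $V^\phi_y$ into disjoint bi-infinite or finite $a$-lines, each of which is a subgraph of $\Z$ and therefore acyclic; (iii) set up the projection $\Pi$ to $\Ft$ and verify that distinct $k$-edges use distinct $b$-edges of $\Ft$, using $P_k(y,P_k(y,g))=g$; (iv) conclude that a hypothetical cycle in $(V^\phi_y,E^\phi_y)$ descends to a reduced closed path in the tree $\Ft$, forcing every $b$-edge to be used an even number of times, then by (iii) exactly twice with opposite orientations, so the cycle reduces to one supported on $0$- and $a$-edges within the $\langle a\rangle$-lines of (ii), contradiction. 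Combining (i) with acyclicity gives that $(V^\phi_y,E^\phi_y)$ is a tree.

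The main obstacle I anticipate is step (iii)–(iv): making the ``distinct $k$-edges use distinct $b$-edges'' statement precise and then extracting the contradiction at the level of $\Ft$ cleanly, since the projection $\Pi$ is not a graph morphism in the naive sense (edges map to paths, not edges), so one has to be careful about what ``a cycle projects to'' means and why cancellation of doubly-traversed $b$-edges is legitimate. An alternative that sidesteps some of this bookkeeping is an Euler-characteristic argument on finite exhaustions: intersect $V^\phi_y$ with a large ball $B_n$ in $\Ft$, count vertices and edges of the induced subnetwork, and show $\#\text{edges} = \#\text{vertices} - 1 + o(\#\text{vertices})$ while connectedness forces $\#\text{edges}\geq \#\text{vertices}-1$; but the boundary effects make this less clean than the projection argument, so I would pursue the projection approach first.
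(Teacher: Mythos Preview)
Your approach is workable but differs from the paper's. You argue connectedness and acyclicity separately, projecting each $E^\phi_y$-edge to a path in the Cayley tree of $\Ft$ and using parity of edge-traversals in a closed walk. One correction in step (iv): your step (iii) gives that each $b$-edge of $\Ft$ is traversed \emph{at most once} by the projected walk (a simple cycle uses each $E^\phi_y$-edge once, and by (iii) distinct $k$-edges project through distinct $b$-edges), not ``exactly twice''; combined with the even-parity constraint this forces zero $b$-traversals, so the cycle contains no $k$-edges at all and lies within a single $\langle a\rangle$-line, contradicting (ii). With that fix the argument works, though making (iii) precise across different values of $k$ uses more than the involution property --- you also need that $gb\langle a\rangle \ne ga^mb\langle a\rangle$ for $m\ne 0$ to see that $\phi_y$ is injective on all $b$-edges, not just those with the same label.

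The paper takes a more structural route that avoids this bookkeeping. It enumerates $\Ft = \{g_1,g_2,\ldots\}$ and, starting from the Cayley tree $\Gamma_0$, at step $n$ deletes the $b$-edge $(g_n,g_nb)$ and inserts $\phi_y(g_n,g_nb)$. Removing an edge from a tree splits it into two components; since $P_k$ only slides along the $\langle a\rangle$-direction, the new endpoints lie in the same components as the old ones, so adding the new edge reconnects the two pieces into a tree $\Gamma_{n+1}$. The limit $\Gamma_\infty$ is therefore a tree, and $(V^\phi_y,E^\phi_y)$ is obtained from $\Gamma_\infty$ by suppressing its degree-$2$ vertices (those with $y_1\ne 1$) and concatenating the adjacent $a$-edges, an operation that preserves simple connectivity. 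This handles connectedness and acyclicity in one stroke and sidesteps the projection-to-paths issue you flagged as the main obstacle.
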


\begin{proof}
Let $g_1,g_2,\ldots$ be an  arbitrary ordering of the group $\Ft$. For each $n \ge 0$ let $\Gamma_n=(V,E_n)$ be the graph with vertex set $V=\Ft$ and edge set $E_n$  defined by
$$E_n:=\Big(E\cup  \big\{\phi_y(g_i,g_ib):~ i\le n \big\}\Big)- \big\{(g_j,g_jb):~ j \le n \big\}.$$  

{\bf Claim 1}. $\Gamma_n$ is a tree for all $n$.

Note that $\Gamma_0=(V,E)$ is the Cayley graph of $\Ft$. So it is a tree. For induction, assume that $\Gamma_n$ is a tree for some $n\ge 0$. So, the graph $\Gamma'_n$ obtained from $\Gamma_n$ by removing the edge $(g_{n+1},g_{n+1}b)$ has two components, each of which is a tree. The vertices $g_{n+1}$ and $g_{n+1}b$ are in different components of $\Gamma'_n$. Let $k=y_1(g_{n+1})$. Since $P_k(y,g_{n+1})=g_{n+1}a^m$ for some $m \in \Z$, it follows that $P_k(y,g_{n+1})$ and $g_{n+1}$ lie in the same component of $\Gamma'_n$. Similarly,  $P_k(y,g_{n+1}b)$ and $g_{n+1}b$ lie in the same component of $\Gamma'_n$. Thus, $\Gamma_{n+1}$, which is obtained from $\Gamma'_n$ by adding the edge $ \big(P_k(y,g_{n+1}), P_k(y,g_{n+1}b) \big)$ is a tree. This proves claim 1.

Let $\Gamma_\infty=(V,E_\infty)$ be the graph with vertex set $V=\Ft$ and edge set $E_\infty$ equal to the edge set $E$ minus the edges $\{(g,gb):~ g\in \Ft\}$ union the edges $\{\phi_y(g,gb):~ g\in \Ft  \}$.  It follows from claim 1 that  $\Gamma_\infty$ is a tree. Observe that if $g\in \Ft$ is such that $y_1(g)\ne 1$ then $g$ has degree 2 inside $\Gamma_\infty$. So $(V^\phi_y,E^\phi_y)$ is obtained from $\Gamma_\infty$ by removing all vertices of degree 2 and  gluing together the edges connecting such vertices. That is to say, if $y_1(g)=1$ and $n>0$ is the smallest number such that $y_1(ga^n)=1$ then we remove all the vertices of the form $ga^i$ for $0<i<n$ and all edges incident to such vertices and add in the edge $(g,ga^n)$. Clearly, this operation preserves simple connectivity. This proves the lemma.
\end{proof}

\begin{lem}
For any $y\in Y$, the rooted network $\cN^\phi_y$ is actionable. 
\end{lem}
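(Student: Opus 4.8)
The plan is to show that the network $\cN^\phi_y = (V^\phi_y, E^\phi_y, \cL^\phi_y, \cG^\phi_y, \rho_y)$ satisfies the definition of actionable: for each vertex $v \in V^\phi_y$ and each edge label $s \in \rng(\cG^\phi_y) = K \sqcup \{0\}$, there is a unique outgoing edge at $v$ with label $s$ and a unique incoming edge at $v$ with label $s$. I would organize the argument by label type, treating $s = 0$ and $s = k \in K$ separately.

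First I would handle the label $0$. By construction, the edges of $E^\phi_y$ with label $0$ are exactly the pairs $(g, ga^n)$ where $y_1(g) = 1$ and $n > 0$ is minimal with $y_1(ga^n) = 1$. For a fixed $v \in V^\phi_y$ (so $y_1(v) = 1$), there is exactly one such edge outgoing — namely take $n > 0$ minimal with $y_1(va^n) = 1$, which exists because membership in $X$ (hence in $Y$) guarantees, via the well-definedness of the $P_k$'s, that such recurrences occur; and it is unique because $n$ is uniquely determined. Similarly there is exactly one incoming $0$-edge at $v$: take $m > 0$ minimal with $y_1(va^{-m}) = 1$, and then $(va^{-m}, v)$ is the unique incoming $0$-edge. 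I would remark that the minimality in both directions is what pins down uniqueness, and that this is really just the statement that along each $\langle a\rangle$-coset, the set $\{g : y_1(g)=1\}$ is a bi-infinite sequence, so it has a well-defined successor and predecessor function.

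Next I would handle a label $k \in K$, and here the two sub-cases $k = 1$ and $k \neq 1$ behave slightly differently because $P_1(y,\cdot)$ is the identity. For $k \neq 1$: the $k$-edges of $E^\phi_y$ are the pairs $\big(P_k(y,g), P_k(y,gb)\big)$ ranging over $g \in \Ft$ with $y_1(g) = k$. Given $v \in V^\phi_y$ with $y_1(v) = k$, set $g = v$; then $P_k(y,v) = v$ (since $x_1(v) = k$ falls into the relevant case with $n$ determined, but actually when $y_1(g) = k$ we have $P_k(y,g) = ga^{-n}$; I need to be careful here). Let me restate: the outgoing $k$-edge at $v$ should be obtained by finding the unique $g$ with $P_k(y,g) = v$ among those with $y_1(g) \in \{1, k\}$, and using the involutivity $P_k(y, P_k(y,g)) = g$ from the previous lemma to invert. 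Concretely, if $y_1(v) = 1$ then $v = P_k(y,v)$ is the tail of the edge $\big(P_k(y,v), P_k(y,vb)\big)$ coming from $g = v$, giving a $k$-edge out of $v$; if $y_1(v) = k$ then $v = P_k(y, P_k(y,v))$ and the relevant source element is $g = P_k(y,v) \cdot$ (a word ending in $b^{-1}$), but the cleanest formulation is: the map $g \mapsto P_k(y,g)$ is an involution on the set of elements with $y_1(g) \in \{1,k\}$, it pairs each $1$-labeled element in an $\langle a\rangle$-coset with a $k$-labeled one, and the edge $\big(P_k(y,g), P_k(y,gb)\big)$ depends only on the unordered pair $\{g^*, g^{**}\}$ where $g^{**}= P_k(y,g^*)$; uniqueness of the $b$-translate then follows because $b$ is a free generator. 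For $k = 1$: since $P_1(y,g) = g$ always, the $1$-edges are just $(g, gb)$ for $g$ with $y_1(g) = 1$, i.e. the original $b$-edges restricted to $V^\phi_y$, and uniqueness is immediate from $b$ being a generator of $\Ft$. Finally I would check that these edges actually live in $V^\phi_y \times V^\phi_y$, which is true by the definition of $E^\phi_y$ and because $y_1\big(P_k(y,h)\big) \in \{1,k\} \subset K$ forces the endpoints to satisfy $y_1 = 1$ exactly when the construction says so — and here I should double-check the edge definitions place both endpoints in $V^\phi_y$.

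The main obstacle I anticipate is the bookkeeping around $P_k$ when $y_1(v) = k$ versus $y_1(v) = 1$: I must verify that every vertex of $V^\phi_y$ — all of which have $y_1(v) = 1$ by definition of $V^\phi_y$! — indeed has a unique outgoing and incoming $k$-edge for each $k \in K$. Since $V^\phi_y = \{g : y_1(g) = 1\}$, actually every vertex carries label $1$, which simplifies matters: for each such $v$ and each $k \in K$, the outgoing $k$-edge is $\big(v, P_k(y, vb)\big) = \big(P_k(y,v), P_k(y,vb)\big)$ (using $P_k(y,v) = v$ when $y_1(v)=1$ — wait, that needs $y_1(v) = 1$, which holds), and the incoming $k$-edge at $v$ is obtained by writing $v = P_k(y, w)$ for $w = $ the unique $k$-labeled partner, then taking $g$ with $gb$ in the $\langle a\rangle$-coset of $w$ and $y_1(g) = k$... so the real content is: given $v$ with $y_1(v)=1$, there is a unique $g$ with $y_1(g) \in \{1,k\}$ and $P_k(y, gb) = v$. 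This uniqueness reduces, after applying the involution $P_k(y,\cdot)$, to uniqueness of $h$ with $hb$ in a prescribed coset and $y_1(h)$ prescribed, i.e. to the freeness of $\Ft$ on $\{a,b\}$ together with the fact that each $\langle a \rangle$-coset meets $\{h : y_1(h) = k\}$ in a sequence with well-defined $P_k$-partners. I would present this carefully but, as the paper does for its analogous lemmas, much of the routine verification can be deferred with "this is a straightforward check using the definitions and the previous lemma," while spelling out the two uniqueness claims (one for label $0$, one for labels in $K$) explicitly.
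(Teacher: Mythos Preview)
The paper's proof is simply ``This is an easy exercise left to the reader,'' and your direct verification---splitting into the label $0$ and the labels $k \in K$---is exactly the intended check.

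One small slip to fix: you assert $P_k(y,v)=v$ when $y_1(v)=1$, but for $k\ne 1$ that is false; in that case $P_k(y,v)=va^n$ for some $n>0$ is the $k$-labeled partner of $v$ in its $\langle a\rangle$-coset. The repair is immediate once you use the involution correctly. For the outgoing $k$-edge at $v\in V^\phi_y$: the edge definition requires $y_1(g)=k$ and source $P_k(y,g)=v$, which by involutivity forces $g=P_k(y,v)$, a unique element. For the incoming $k$-edge at $v$: one needs $y_1(g)=k$ and $P_k(y,gb)=v$, hence $gb=P_k(y,v)$, hence $g=P_k(y,v)b^{-1}$, again unique. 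You should also note (as you flag at the end) that verifying the endpoints actually lie in $V^\phi_y$ uses $y_1(gb)=y_1(g)$, i.e.\ that $y$ lies in the support of $\kappa^{\Ft/\langle b\rangle}\times\kappa^\Ft$; the paper is tacitly working there throughout.
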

\begin{proof}
This is an easy exercise left to the reader.
\end{proof}

Let $T=K \sqcup \{0\}$ and let $\F_T$ be the free group generated by $T$. Let $K_*$ be the set of all finite nonempty ordered lists of elements in $K\times K$. In other words, $K_* = \bigcup_{n=1}^\infty (K\times K)^n$. Define $\Omega: Y \to K_*^\Fk$ by $\Omega y(g) := \cL^\phi\big( A(\cN_y^\phi: \rho_y, g) \big)$.   The definition of $\Omega$ ensures that for any $y\in Y$ the network $\cN_{\Omega y}$ induced by $\Omega y$ and $T$ is isomorphic to $\cN^\phi_y$.  Warning:  this map is completely different from the map $\Omega$  defined in \S \ref{ section: theorem one }. We will not need the latter in this section. We will show that $\Omega $ is a stable orbit equivalence onto a Bernoulli shift over $\Fk$.

\begin{lem}\label{ lemma: stable morphism}
For any $y\in Y$, 
$$ \big\{ \Omega(g\cdot y):~ g\in \Ft \textrm{ such that }g\cdot y \in Y \big\}  =  \big\{ f\cdot (\Omega y):~ f\in \Fk \big\}.$$
\end{lem}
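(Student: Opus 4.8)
\textbf{Proof plan for Lemma \ref{ lemma: stable morphism}.}

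The plan is to prove the two inclusions separately, using the action-network machinery from \S\ref{ section: orbit morphisms } together with the root-change lemma \ref{ lemma: root change }. The key conceptual point is that $\cN^\phi_y$ is \emph{defined without reference to the root}: the vertex set $V^\phi_y=\{g:y_1(g)=1\}$, the edge set $E^\phi_y$, and the labels $\cL^\phi_y,\cG^\phi_y$ are all determined by $y$ in a shift-equivariant way, so that for $g\in\Ft$ with $g\cdot y\in Y$ (equivalently $y_1(g^{-1})=1$, so $g^{-1}\in V^\phi_y$) the network $\cN^\phi_{g\cdot y}$ is isomorphic to the network one gets from $\cN^\phi_y$ by moving the root from $e$ to $g^{-1}$. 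This is the analogue of lemma \ref{ lemma: root change } for the rewired network, and I would first record it: for $g\cdot y\in Y$, $\cN^\phi_{g\cdot y}\cong (V^\phi_y,E^\phi_y,\cL^\phi_y,\cG^\phi_y,g^{-1})$. This follows by checking that each bullet in the definition of $\cN^\phi_y$ is equivariant — $P_k(g\cdot y, h)=g\,P_k(y,g^{-1}h)$ and similarly for the "distance" edges $(g,ga^n)$ — exactly as in \S\ref{ section: theorem one }.

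For the inclusion $\subseteq$: fix $g\in\Ft$ with $g\cdot y\in Y$. Since $(V^\phi_y,E^\phi_y)$ is a tree (the previous-but-one lemma) and actionable (the previous lemma), the root $e$ and the vertex $g^{-1}$ both lie in $V^\phi_y$ and hence there is a unique reduced word $f\in\Fk$ with $A(\cN^\phi_y:e,f)=g^{-1}$; said differently, $g^{-1}$ and $e$ are in the same $\Fk$-orbit under the right-action on $V^\phi_y$. Then for every $h\in\Fk$,
$$\Omega(g\cdot y)(h)=\cL^\phi_{g\cdot y}\big(A(\cN^\phi_{g\cdot y}:e,h)\big)=\cL^\phi_y\big(A(\cN^\phi_y:g^{-1},h)\big)=\cL^\phi_y\big(A(\cN^\phi_y:e,fh)\big)=\Omega y(fh)=\big(f\cdot\Omega y\big)(h),$$
using the root-change isomorphism in the second equality and $A(\cN^\phi_y:A(\cN^\phi_y:e,f),h)=A(\cN^\phi_y:e,fh)$ (associativity of the right-action) in the third. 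Hence $\Omega(g\cdot y)=f\cdot\Omega y\in\{f\cdot\Omega y:f\in\Fk\}$.

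For the reverse inclusion $\supseteq$: fix $f\in\Fk$. Set $v:=A(\cN^\phi_y:e,f)\in V^\phi_y\subseteq\Ft$, so $y_1(v)=1$, and let $g:=v^{-1}$, which then satisfies $y_1(g^{-1})=1$, i.e. $(g\cdot y)_1(e)=y_1(g^{-1})=1$, so $g\cdot y\in Y$. The same computation as above (run in reverse) gives $\Omega(g\cdot y)(h)=\cL^\phi_y\big(A(\cN^\phi_y:v,h)\big)=\Omega y(fh)=(f\cdot\Omega y)(h)$ for all $h\in\Fk$, so $f\cdot\Omega y=\Omega(g\cdot y)$ lies in the left-hand set. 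Combining the two inclusions proves the lemma.

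\textbf{Main obstacle.} The only nontrivial point is establishing the root-change statement for $\cN^\phi_y$ cleanly — i.e. verifying that the whole rewiring construction (the sets $V^\phi_y$, $E^\phi_y$ coming from the pairings $P_k$, and the list-labels $\cL^\phi_y$) is genuinely shift-equivariant, so that changing $y$ to $g\cdot y$ has exactly the effect of relocating the root. Once that is in hand everything else is bookkeeping with the right-action identity $A(\cN:A(\cN:v,f),h)=A(\cN:v,fh)$ and the fact that $(V^\phi_y,E^\phi_y)$ is a connected actionable network, so that any two of its vertices are related by a (unique) group element. I would also take care that $g\cdot y\in Y$ is equivalent to $g^{-1}\in V^\phi_y$, which is what makes the correspondence $g\leftrightarrow f$ in the two inclusions exact.
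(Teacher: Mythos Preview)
Your approach is essentially identical to the paper's: both first argue that the rewired network is built equivariantly, so that $\cN^\phi_{g\cdot y}$ is just $\cN^\phi_y$ with the root moved to $g^{-1}$, and then use that $(V^\phi_y,E^\phi_y)$ is a tree (and actionable) to realise the root change by a unique element of $\Fk$, treating the reverse inclusion the same way. There is one bookkeeping slip in your displayed chain: since the shift action is $(f\cdot z)(h)=z(f^{-1}h)$, the last equality should read $\Omega y(fh)=(f^{-1}\cdot\Omega y)(h)$, not $(f\cdot\Omega y)(h)$; the paper avoids this by choosing $f$ so that $A(\cN^\phi_y:\rho_y,f^{-1})=g^{-1}$ rather than $A(\cN^\phi_y:\rho_y,f)=g^{-1}$. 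This does not affect the set equality being proved, only the name of the element witnessing it.
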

\begin{proof}
Let $y \in Y$ and let $\cN_y:=(V_y,E_y,\cL_y,\cG_y,\rho_y)$ be  the rooted network induced by $y$  and $\cS=\{a,b\}$. For $g\in \Ft$ such that $g\cdot y \in Y$, the rooted network $\cN_{g\cdot y}$ is isomorphic to $(V_y,E_y,\cL_y,\cG_y,g^{-1})$. This follows from lemma \ref{ lemma: root change }.

Let $\cN^\phi_y=(V^\phi_y,E^\phi_y,\cL^\phi_y,\cG^\phi_y,\rho_y)$. As mentioned previously, $\cN^\phi_y$ is isomorphic to $\cN_{\Omega y}$. By construction, it follows that $\cN_{\Omega(g\cdot y)}$ is isomorphic to $(V^\phi_y,E^\phi_y,\cL^\phi_y,\cG^\phi_y,g^{-1})$. Let $f \in \Fk$ be the unique element such that $A(\cN^\phi_y:\rho_y,f^{-1})=g^{-1}$. We know that such an element exists and is unique because  $(V^\phi_y,E^\phi_y)$ is a tree. Again, by lemma \ref{ lemma: root change }, $(V^\phi_y,E^\phi_y,\cL^\phi_y,\cG^\phi_y,g^{-1})$ is isomorphic to $\cN_{f \cdot \Omega(y)}$. This proves that $\Omega(g\cdot y) = f\cdot \Omega(y)$. Thus $\{ \Omega(g\cdot y):~ g\in \Ft, g\cdot y \in Y\}  \subset \{ f\cdot (\Omega y):~ f\in \Fk\}.$ The reverse inclusion is similar.
\end{proof}

\subsection{The inverse}
 In this section, we construct the inverse to $\Omega $.
\subsubsection{Pairings}
Given an element $\xi = \big( (i_0,j_0), \ldots, (i_n,j_n)  \big)\in K_*$, let $\len(\xi):=n$. In this paper $\N=\{0,1,2,\ldots\}$. For $z \in K_*^\Fk$, define 
$$\Fkz:= \big\{ (g,i) \in \Fk \times \N:~ \len[z(g)] \ge i \big\}.$$
Define a partial ordering on $ \Fkz$ by $(g,i) < (h,j)$ if either
\begin{enumerate}
\item there exists $n>0$ such that $ga^n=h$, or
\item $g=h$ and $i<j$.
\end{enumerate}
If there does not exist an $n$ such that $ga^n=h$ then $(g,i)$ and $(h,j)$ are not comparable.
 
For $(g,m) \in  \Fkz$, define $z(g,m):=(i_m,j_m)$ where $z(g)= \big( (i_0,j_0), \ldots, (i_n,j_n)  \big)$. Define $z_1(g,m):=i_m$ and $z_2(g,m):=j_m$.

For $z \in K_*^\Fk$, $g\in \Fk$ and $k\in K$ define $Q_k(z,g) \in  \Fkz$ as follows.
\begin{itemize}
\item If $k=1$, then $Q_k(z,g):=(g,0)$.
\item If $k\ne 1$ then let $Q_k(z,g)$ be the smallest element of $ \Fkz$ such that 
\begin{enumerate}
\item $(g,0) <  Q_k(z,g)$,
\item $z_1 \big(Q_k(z,g) \big)=k$ and
 \begin{eqnarray*}
 &&\Big|\big\{   (h,i) \in  \Fkz:~ z_1(h,i) = 1, (g,0) \le (h,i) \le Q_k(z,g)\big\}\Big|\\
  &=& \Big|\big\{   (h,i) \in  \Fkz:~ z_1(h,i) = k, (g,0) \le (h,i) \le Q_k(z,g)\big\}\Big|.
   \end{eqnarray*}
\end{enumerate}
\end{itemize}
A-priori, $Q_k(z,g)$ may not be well-defined. However, we have:
\begin{lem}
Let $Z$ be the set of all $z\in  K_*^\Fk$ such that
\begin{itemize}
\item for all $g\in \Fk$ and all $k\in K$, $Q_k(z,g)$ is well-defined;
 \item  $Q_k(z,\cdot)$ maps $\Fk$ bijectively onto the set $ \big\{ (g,i) \in \Fkz:~z_1(g,i)=k \big\}$. 
   \end{itemize}
  Then $\Omega (Y) \subset Z$. 
  \end{lem}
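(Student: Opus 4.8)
The plan is to show that for any $y \in Y$, the point $z = \Omega y$ lies in $Z$, which requires verifying the two bulleted conditions: that $Q_k(z,g)$ is well-defined for all $g \in \Fk$ and $k \in K$, and that $Q_k(z,\cdot)$ is a bijection from $\Fk$ onto $\{(g,i) \in \Fkz : z_1(g,i) = k\}$. The key observation driving everything is that the network $\cN_{\Omega y}$ induced by $\Omega y$ is isomorphic to $\cN^\phi_y$, and that $\cN^\phi_y$ was obtained from the Cayley graph of $\Ft$ by contracting the $a$-segments between consecutive vertices $g$ with $y_1(g) = 1$. Concretely, I would set up a correspondence: elements $(g,m) \in \Fkz$ biject with elements $h \in \Ft$ satisfying $y_1(h) = 1$, via $h = A(\cN^\phi_y : \rho_y, g) \cdot a^m$ (reading off the $m$-th entry of the label list $\cL^\phi_y$ at the vertex $A(\cN^\phi_y:\rho_y,g)$). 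Under this correspondence, $z_1(g,m) = y_1(h)$ — wait, more precisely $z_i(g,m)$ records $(i_m, j_m)$ where $y(ga^m) = (i_m,j_m)$ at the appropriate vertex, so $z_1(g,m)$ corresponds to $x_1$-values along an $\langle a \rangle$-coset in the original picture — and the partial order $(g,i) < (h,j)$ on $\Fkz$ matches the ordering "further along in the positive $a$-direction" on the set $\{h \in \Ft : y_1(h) = 1\}$ within a single $\langle a \rangle$-coset of $\Ft$.

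First I would make this dictionary precise as a lemma: there is an order-isomorphism between $(\Fkz, <)$ restricted to any "$a$-chain" $\{(ga^n, i)\}$ and a suitable $\langle a \rangle$-orbit segment in $\Ft$ (restricted to vertices with $y_1 = 1$), which also intertwines $z_1$ with $y_1$ and $z_2$ with $y_2$. Then, under this dictionary, the definition of $Q_k(z,g)$ translates verbatim into the definition of $P_k(y, h)$ from \S\ref{ section: the pairings }: the counting condition $|\{(h,i) : z_1 = 1, \ldots\}| = |\{(h,i) : z_1 = k, \ldots\}|$ becomes exactly condition (2) in the definition of $P_k$, because the vertices $g \in \Ft$ with $y_1(g) = 1$ that lie strictly between two consecutive such vertices are precisely the ones contracted away, and those contribute nothing to the $x_1 \in \{1,k\}$ counts. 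Hence well-definedness of $Q_k(z, g)$ for $z = \Omega y$ follows from well-definedness of $P_k(y, \cdot)$, which is guaranteed since $y \in Y \subset X$ and $X$ was shown to have full measure with the well-definedness property. The bijectivity of $Q_k(z, \cdot)$ similarly reduces to the bijectivity of $P_k(y, \cdot)$ onto $\{g \in \Ft : y_1(g) = k\}$, combined with the (elementary) fact that the map $(g,i) \mapsto$ corresponding $h \in \Ft$ with $y_1(h)=k$ is a bijection between $\{(g,i) \in \Fkz : z_1(g,i) = k\}$ and $\{h \in \Ft : y_1(h) = k\}$, and that $P_k(y,\cdot)$ and $P_1(y,\cdot) = \mathrm{id}$ match up the relevant index sets.

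The main obstacle I expect is bookkeeping the dictionary between the two index sets carefully enough that the translation of the counting condition is actually airtight — in particular, checking that the "smallest $n$" in the definition of $Q_k$ corresponds to the "smallest $n$" in the definition of $P_k$ once the contracted vertices are accounted for, and that incomparability in $\Fkz$ (no $n$ with $ga^n = h$) corresponds exactly to lying in different $\langle a \rangle$-cosets in $\Ft$, so that the positive/negative cases of the $P_k$ definition are both captured (the $x_1(g) = k$ case in $P_k$ uses $a^{-n}$, which corresponds to $(g,0)$ being \emph{above} rather than below the target in $\Fkz$ — this needs the observation that $Q_k$ as defined only handles the "$z_1(g,0) = 1$, look forward" situation, so one must separately note that when $z_1(g,0) = k$ the relevant preimage under $Q_k$ is found by the bijectivity clause rather than directly). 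I would handle this by first proving the bijectivity-type statement and deriving well-definedness from it, or by treating the two together via the involution identity $P_k(y, P_k(y,g)) = g$ transported through the dictionary. Everything else is routine once the correspondence is pinned down, so I would state the dictionary as a clearly-formulated sublemma and then dispatch both bullets in a few lines each.
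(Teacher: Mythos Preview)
The paper's proof is simply ``This is an easy exercise left to the reader,'' so there is nothing to compare against; your proposal is a correct way to carry out that exercise, and the dictionary $(g,m)\leftrightarrow \alpha_y(g)a^m$ is exactly the right tool.

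Two small corrections. First, your stated bijection target is off: the map $(g,m)\mapsto \alpha_y(g)a^m$ sends $\Fkz$ onto \emph{all} of $\Ft$, not just $\{h:y_1(h)=1\}$ (the latter is the image of $\{(g,0):g\in\Fk\}$ alone). You seem to realize this mid-sentence, but the sublemma should be stated correctly from the start. Second, your worry about the ``$z_1(g,0)=k$, look backward'' case is unnecessary: by construction $\alpha_y(g)\in V^\phi_y$, so $y_1(\alpha_y(g))=1$ and hence $z_1(g,0)=1$ for every $g\in\Fk$ whenever $z=\Omega y$. Thus $Q_k$ always corresponds to the forward case of $P_k$, and the backward case of $P_k$ enters only through the involution identity $P_k(y,P_k(y,h))=h$ when you verify surjectivity of $Q_k(z,\cdot)$ onto $\{(g,i):z_1(g,i)=k\}$. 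With those two points cleaned up, the argument goes through as you describe.
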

\begin{proof}
This is an easy exercise left to the reader.
\end{proof}

\subsubsection{The rooted network  of the inverse}

Recall that $T=K \sqcup \{0\}$. For ease of notation, we will write $s_k$ for the element of $\Fk$ corresponding to $k\in T$. For $z\in Z$, let $\cN^\psi_z:=( \Fkz,E^\psi_z,\cL^\psi_z,\cG^\psi_z,\rho_z)$ where
\begin{itemize}
\item $\cL^\psi_z(g,i)=z(g,i)$ for any $(g,i) \in  \Fkz$. 
\item $\cG^\psi_z$ maps $E^\psi_z$ into $\{a,b\}$.
\item $E^\psi_z$ contains all edges of the form $e=\big((g,i), (g,i+1)\big)$ for all $(g,i), (g,i+1) \in  \Fkz$. It also contains all edges of the form $e=\big( (g,n), (ga,0) \big)$ where $n=\len(z,g)$. For any such edge define $\cG^\psi_z(e):=a$.
\item  $E^\psi_z$ contains all edges of the form $e= \big( Q_k(z,f), Q_k( z, fs_k) \big)$ where $f\in\Fk$ is any element with $Q_k(z,f)=(g,i)$ for some $(g,i) \in \Fkz$ with $z_1(g,i)=k$. For each such edge define  $\cG^\psi_z(e)=b$.
\item The root $\rho_z=(e,0)$ where $e$ is the identity element in $\Fk$.
\end{itemize}

\begin{lem}
For any $z\in Z$, the rooted network $\cN^\psi_z$ is actionable. If $\Theta: Z \to (K\times K)^\Ft$ is defined by $\Theta z(g)=\cL^\psi \big( A(\cN^\psi_z: \rho_z, g) \big)$ then $\Theta$ is the inverse to $\Omega$. That is $\Omega \Theta (z)=z$ and $\Theta \Omega(y)=y$ for all $z\in Z$ and all $y\in Y$. 
 \end{lem}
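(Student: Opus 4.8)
The plan is to verify actionability of $\cN^\psi_z$ directly from the definitions and then to show that $\Theta$ and $\Omega$ are mutually inverse by tracing through how the two network constructions undo one another. For actionability, I would check that for each vertex $(g,i) \in \Fkz$ and each generator $t \in \{a,b\}$ there is a unique outgoing and a unique incoming $t$-labeled edge. For $t=a$, this is immediate: either $i<\len(z,g)$, in which case the unique outgoing $a$-edge goes to $(g,i+1)$, or $i=\len(z,g)$, in which case it goes to $(ga,0)$; the incoming case is symmetric since every $(g,i)$ with $i>0$ has predecessor $(g,i-1)$ and every $(g,0)$ has predecessor $(ga^{-1},\len(z,ga^{-1}))$. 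For $t=b$, I would invoke the second bullet of the lemma defining $Z$: the map $f \mapsto Q_k(z,f)$ is a bijection from $\Fk$ onto $\{(g,i) \in \Fkz : z_1(g,i)=k\}$, and every vertex of $\Fkz$ has $z_1$-value lying in exactly one element of $K$ (here I use that $K$ has a distinguished value, and $1 \in K$, so the ``$k=1$'' and ``$k \neq 1$'' cases partition the vertex set). Given a vertex $(g,i)$ with $z_1(g,i)=k$, write it as $Q_k(z,f)$ for the unique $f$; then the outgoing $b$-edge is $\big(Q_k(z,f), Q_k(z,fs_k)\big)$ and the incoming $b$-edge is $\big(Q_k(z,fs_k^{-1}), Q_k(z,f)\big)$, both unique. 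This also uses $Q_k(z,Q_k(z,g))=g$, which is the analogue of $P_k(x,P_k(x,g))=g$ and can be verified the same way.

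For the inverse assertion, the cleanest route is to observe that the construction of $\cN^\psi_z$ from $z$ is the exact reversal of the construction of $\cN^\phi_y$ from $y$, so that composing them recovers the identity. Concretely, I would first note that $\cN^\psi_{\Omega y}$ is isomorphic, as a rooted network, to the Cayley-graph-with-labels picture of $y$ itself, i.e. to $\cN_y$: the vertex set $\F_T^{\Omega y}$ is naturally identified with $V_y = \Ft$ via $(g,i) \mapsto$ (the $a^i$-translate of the group element of $V^\phi_y$ corresponding to $g$), because $\Omega y(g) = \cL^\phi(A(\cN^\phi_y:\rho_y,g))$ records precisely the $K\times K$-labels along each $a$-segment, and $\len(\Omega y(g))$ is the length of that segment. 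Under this identification the $a$-edges of $\cN^\psi_{\Omega y}$ become the $a$-edges of $\cN_y$, and the $Q_k$ pairings become the $P_k$ pairings — this is the key compatibility to check, namely that $Q_k(\Omega y, \cdot)$ corresponds to $P_k(y,\cdot)$ under the identification of vertices, which follows because the defining balanced-counting condition for $Q_k$ on $\Fkz$ translates verbatim into the balanced-counting condition for $P_k$ along the $a$-line in $\Ft$. Hence $A(\cN^\psi_{\Omega y}:\rho, g) = g$ (as an element of $V_y=\Ft$) for all $g \in \Ft$, and reading off $\cL^\psi$ gives $\Theta\Omega(y)(g) = y(g)$, so $\Theta\Omega(y)=y$.

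The other composition $\Omega\Theta(z)=z$ I would handle symmetrically: starting from $z \in Z$, form $\Theta z \in (K\times K)^\Ft$ and check that it lies in $Y$ (i.e. $(\Theta z)_1(e)=1$, which holds because the root $\rho_z=(e,0)$ has $z_1(e,0)=1$ since $1 \in K$ is exactly the value for which $Q_1(z,g)=(g,0)$), then check that $\cN^\phi_{\Theta z}$ is isomorphic to $\cN^\psi_z$ as a rooted network. Again this is because the operation "remove degree-2 vertices along $a$-segments and record their labels as a list" applied to $\cN_{\Theta z}$ exactly inverts the operation "expand each list-labeled vertex into an $a$-segment" that produced $\cN^\psi_z$ from $z$; the $b$-edge pairings match because $P_k(\Theta z, \cdot)$ on $V^\phi_{\Theta z}$ corresponds to $Q_k(z,\cdot)$ on $\{(g,i): z_1(g,i)=k\}$ by the same balanced-counting translation. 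Then $\Omega\Theta(z)(g) = \cL^\phi(A(\cN^\phi_{\Theta z}:\rho,g)) = \cL^\psi(A(\cN^\psi_z:\rho,g)) = z(g)$.

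The main obstacle, as usual in this paper, is bookkeeping rather than conceptual difficulty: one must set up the vertex identification $\Fkz \leftrightarrow \Ft$ (respectively $V^\phi_{\Theta z} \leftrightarrow \Fkz$) carefully enough that the $P_k$/$Q_k$ pairings are seen to correspond, and in particular one must make sure the balanced-counting conditions defining $P_k$ (which count group elements $ga^m$ with prescribed $x_1$-value) and $Q_k$ (which count pairs $(h,i) \in \Fkz$ with prescribed $z_1$-value, ordered along the $a$-direction) really are the same count under the identification. Once that correspondence is in hand — together with the involutivity $P_k(x,P_k(x,g))=g$ and $Q_k(z,Q_k(z,g))=g$ and the tree/actionability facts already established — the two compositions collapse to the identity by inspection, so I would present this as "an exercise in chasing the definitions" in keeping with the style of the surrounding lemmas, while spelling out the vertex identification explicitly.
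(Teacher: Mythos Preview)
Your proposal is correct and is exactly the kind of definition-chase the paper intends: the paper's own proof is the single line ``This is an easy exercise left to the reader,'' so your sketch simply fills in that exercise along the natural route. One small slip: the expression $Q_k(z,Q_k(z,g))$ is ill-typed since $Q_k(z,g)\in\Fkz$ rather than $\Fk$; the involutivity you want is encoded instead in the bijectivity clause of the definition of $Z$, which is what you actually use.
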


\begin{proof}
This is an easy exercise left to the reader.
\end{proof}

Let $\mu$ be the probability measure on $Y$ defined by 
$$\mu(E) = \frac{\kappa^{\Ft/\langle b\rangle} \times \kappa^\Ft(E)}{\kappa^{\Ft/\langle b\rangle} \times \kappa^\Ft(Y)}$$
for any Borel $E \subset Y$.

\begin{cor}\label{ corollary: stable orbit equivalence }
$\Omega$ is a stable orbit-equivalence between the shift-action $\Ft \cc  \big((K\times K)^\Ft, \kappa^{\Ft/\langle b\rangle} \times \kappa^\Ft) \big)$ and the shift-action $\Fk \cc (K_*^\Fk, \Omega_*\mu)$.
\end{cor}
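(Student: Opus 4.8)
The plan is to assemble Corollary~\ref{ corollary: stable orbit equivalence } from the pieces already constructed in this section, together with the general framework of \S\ref{ section: orbit morphisms }. The map $\Omega\colon Y \to K_*^\Fk$ and its inverse $\Theta$ have already been produced, and Lemma~\ref{ lemma: stable morphism} shows that $\Omega$ carries $\Ft$-orbit intersections with $Y$ exactly onto $\Fk$-orbits. So the only remaining content is (i) that $Y$ is a set of positive measure, (ii) that $\Omega$ transports the normalized measure $\mu$ on $Y$ to $\Omega_*\mu$ in a way that respects the measure-space structure, and (iii) that $\Omega$ is a Borel isomorphism from $(Y,\mu)$ onto $(K_*^\Fk,\Omega_*\mu)$. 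Item (i) is immediate: $Y=\{x\in X:\ x_1(e)=1\}$ has measure $\kappa^{\Ft/\langle b\rangle}\times\kappa^\Ft(X)\cdot\kappa(\{1\})>0$ since $\kappa$ is uniform on a set with $|K|\ge 2$ and $X$ is conull. Item (iii) follows because $\Theta\circ\Omega=\mathrm{id}_Y$ and $\Omega\circ\Theta=\mathrm{id}_Z$ with $\Omega(Y)\subset Z$, and both $\Omega$ and $\Theta$ are manifestly Borel (they are defined by local, measurable rules on the rooted networks); hence $\Omega$ is a Borel bijection onto its image $\Omega(Y)$, with measurable inverse $\Theta|_{\Omega(Y)}$.

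First I would recall the definition of stable orbit equivalence from the introduction: one needs positive-measure sets $A_1\subset X_1$, $A_2\subset X_2$ and a measure-space isomorphism $\phi\colon(A_1,\mu_1(\cdot|A_1))\to(A_2,\mu_2(\cdot|A_2))$ with $\{T_g x: g\in G_1\}\cap A_1=\{S_g\phi(x): g\in G_2\}\cap A_2$ for a.e.\ $x$. Here $X_1=(K\times K)^\Ft$ with measure $\kappa^{\Ft/\langle b\rangle}\times\kappa^\Ft$, $A_1=Y$, and $\mu_1(\cdot|A_1)=\mu$ by construction. On the target side, $X_2=K_*^\Fk$ carries the measure $\Omega_*\mu$; we take $A_2=K_*^\Fk$ itself, so $\mu_2(\cdot|A_2)=\Omega_*\mu$, and $\phi=\Omega$. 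Then $\Omega_*\mu = \mu_2(\cdot|A_2)$ by definition, so $\Omega$ is a measure-space isomorphism $(Y,\mu)\to(K_*^\Fk,\Omega_*\mu)$ by item (iii) above. It remains only to check the orbit condition, and this is precisely Lemma~\ref{ lemma: stable morphism}: for a.e.\ $y\in Y$ (in fact every $y\in Y$), $\{\Omega(g\cdot y): g\in\Ft,\ g\cdot y\in Y\}=\{f\cdot\Omega y: f\in\Fk\}$, which says exactly $\{g\cdot y: g\in\Ft\}\cap Y$ is mapped by $\Omega$ onto $\{f\cdot\Omega y:f\in\Fk\}\cap K_*^\Fk=\{f\cdot\Omega y:f\in\Fk\}$.

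The one point needing slightly more than a citation is that $\Omega$ is a \emph{bijective} measure-space isomorphism rather than merely a measurable factor map, and that $Y$ and $K_*^\Fk$ are standard Borel spaces so that a Borel bijection with Borel inverse is automatically a measure isomorphism once the pushforward measure is used on the target. I would note that $Y$ is a Borel subset of the standard Borel space $(K\times K)^\Ft$ and $K_*^\Fk$ is standard Borel (a countable set raised to a countable power), invoke the fact that $\Theta$ restricted to $\Omega(Y)\subset Z$ is a genuine two-sided inverse (from the preceding lemma, $\Omega\Theta=\mathrm{id}$ and $\Theta\Omega=\mathrm{id}$), and conclude. I expect the main obstacle — and it is a mild one — to be bookkeeping: making sure that the identification of $\mu_2(\cdot|A_2)$ with $\Omega_*\mu$ is tautological (by choosing $A_2$ to be the whole space) and that the orbit equation of Lemma~\ref{ lemma: stable morphism} is literally the SOE orbit condition after unwinding the definitions, with no stray null-set issues. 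Since everything is defined equivariantly and the exceptional set $X\setminus(\text{conull})$ is $\Ft$-invariant and null, there are no null-set subtleties, and the proof is a short assembly.

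\begin{proof}[Proof of Corollary~\ref{ corollary: stable orbit equivalence }]
Since $\kappa$ is the uniform measure on $K$ and $|K|\ge 2$, we have $\kappa(\{1\})>0$, and since $X$ is conull for $\kappa^{\Ft/\langle b\rangle}\times\kappa^\Ft$ it follows that $Y=\{x\in X:\ x_1(e)=1\}$ has positive measure. By definition, $\mu$ is the normalized restriction $\big(\kappa^{\Ft/\langle b\rangle}\times\kappa^\Ft\big)(\cdot\,|\,Y)$.

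By the preceding lemma, $\Theta\circ\Omega=\mathrm{id}_Y$ and, since $\Omega(Y)\subset Z$, also $\Omega\circ\Theta=\mathrm{id}$ on $\Omega(Y)$. Both $\Omega$ and $\Theta$ are Borel maps between the standard Borel spaces $(K\times K)^\Ft$ and $K_*^\Fk$, being defined by measurable local rules on the induced rooted networks. Hence $\Omega$ is a Borel isomorphism of $Y$ onto $\Omega(Y)$ with Borel inverse $\Theta|_{\Omega(Y)}$, and therefore $\Omega$ is a measure-space isomorphism from $(Y,\mu)$ onto $\big(K_*^\Fk,\Omega_*\mu\big)$; in particular $\Omega(Y)$ is $\Omega_*\mu$-conull.

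Finally, Lemma~\ref{ lemma: stable morphism} states that for every $y\in Y$,
$$ \big\{ \Omega(g\cdot y):~ g\in \Ft,\ g\cdot y \in Y \big\} = \big\{ f\cdot (\Omega y):~ f\in \Fk \big\},$$
which is precisely the statement that $\{g\cdot y:\ g\in\Ft\}\cap Y$ is carried by $\Omega$ bijectively onto $\{f\cdot\Omega y:\ f\in\Fk\}\cap K_*^\Fk$. Taking $A_1=Y\subset(K\times K)^\Ft$ and $A_2=K_*^\Fk$ (so that $\Omega_*\mu = \Omega_*\mu(\cdot\,|\,A_2)$), the map $\Omega$ satisfies all the requirements in the definition of a stable orbit equivalence between $\Ft \cc \big((K\times K)^\Ft, \kappa^{\Ft/\langle b\rangle} \times \kappa^\Ft\big)$ and $\Fk \cc \big(K_*^\Fk, \Omega_*\mu\big)$.
\end{proof}
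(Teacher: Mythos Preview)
Your proof is correct and follows exactly the paper's approach: the paper's proof reads simply ``This follows from the lemma above and lemma~\ref{ lemma: stable morphism},'' and your argument is a careful unpacking of precisely those two ingredients (the invertibility of $\Omega$ via $\Theta$ for the measure-space isomorphism, and the orbit equation for the SOE condition). The additional remarks on positivity of $\mu(Y)$ and the standard-Borel bookkeeping are appropriate details that the paper leaves implicit.
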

\begin{proof}
This follows from the lemma above and lemma \ref{ lemma: stable morphism}.
\end{proof}

\subsection{A measure space isomorphism}

\begin{prop}
 $\Omega_* \mu= \kappa_*^\Fk$ for some probability measure $\kappa_*$ on $K_*$.
 \end{prop}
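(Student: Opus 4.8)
The plan is to show that under the measure $\mu$ (which is $\kappa^{\Ft/\langle b\rangle}\times\kappa^\Ft$ conditioned on $Y$), the pushforward $\Omega_*\mu$ makes the coordinates $\{\Omega y(g)\}_{g\in\Fk}$ an i.i.d.\ family of $K_*$-valued random variables. Since $\Omega_*\mu$ is automatically shift-invariant (because $\Omega$ is a stable orbit morphism, by Lemma~\ref{ lemma: stable morphism}), it suffices to prove this i.i.d.\ property, and then $\kappa_*$ is simply the common law of $\Omega y(e)$. The strategy mirrors the proof of the analogous proposition in \S\ref{ section: isomorphism }: induct over an exhaustion of $\Fk$ by finite-type subtrees, using that the ``new'' randomness exposed when we attach one more edge-class lives on a region of $\Ft$ disjoint from what has already been revealed, hence is independent of it.

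First I would set up the right bookkeeping on the source side. Recall $\cN^\phi_y$ has vertex set $V^\phi_y=\{g\in\Ft:~y_1(g)=1\}$ and $\Omega y(g)=\cL^\phi\big(A(\cN^\phi_y:\rho_y,g)\big)$; write $\alpha_y(g):=A(\cN^\phi_y:\rho_y,g)\in V^\phi_y$ so that $\Omega y(g)$ records the values of $y$ on the ``interval'' $\alpha_y(g),\alpha_y(g)a,\ldots,\alpha_y(g)a^n$ up to the next return to $V^\phi_y$. The key structural facts, all proved as in Lemma~\ref{lem:alpha}, are: $\alpha_y(g s_0)$ is the next element of $V^\phi_y$ reached from $\alpha_y(g)$ by moving in the $a$-direction; and for $k\in K$, $\alpha_y(g s_k)=P_k\big(y,\alpha_y(g)b\big)$ (respectively $\alpha_y(gs_k^{-1})=P_k\big(y,\cdot\big)$-type formulas). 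I would then prove the analogue of the ``determined by'' lemma: for a subtree $W\subset\Fk$ that is ``saturated'' in the appropriate sense (contains $e$, closed under the $s_0$-axis at each vertex), the functions $y\mapsto\alpha_y(w)$ and $y\mapsto P_k(y,\alpha_y(w)b)$ for $w\in W$, $k\in K$ are all determined by $y\mapsto(\Omega y(w))_{w\in W}$ — because reading off the $\Omega y$-labels along $W$ lets one reconstruct $y_1$ on all the relevant $a$-intervals, and $y_1$ is exactly the data that defines the pairings $P_k$.

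Next comes the induction proper. Start with the $s_0$-axis $W_0=\{s_0^n:~n\in\Z\}$: here $\alpha_y(s_0^n)$ is the $n$-th return of the $a$-line through $e$ to $V^\phi_y$, and since $y$ has law $\mu$ — i.e.\ $y$ is an i.i.d.\ $(K\times K)^\Ft$-field conditioned on the single coordinate event $\{y_1(e)=1\}$ — the blocks $\Omega y(s_0^n)$ are i.i.d.\ (the conditioning only pins down where one block boundary sits, not the block contents, by the renewal structure of i.i.d.\ sequences). Then, assuming $\{\Omega y(w)\}_{w\in W}$ is i.i.d.\ for a saturated subtree $W$, I attach one more edge-class: a vertex $v\in W$ together with $v s_k$ (for some $k\in K$) and the whole $s_0$-axis through $v s_k$. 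By the ``determined by'' lemma, $y\mapsto\alpha_y(v)$ and $y\mapsto P_k(y,\alpha_y(v)b)$ are measurable w.r.t.\ $(\Omega y(w))_{w\in W}$; the new coordinates $\Omega y(v s_k a^n)$, $n\in\Z$, are functions of $y$ on the $a$-intervals hanging off the vertices $\alpha_y(v s_k) a^n = P_k(y,\alpha_y(v)b)a^n$, and the crucial geometric point — exactly as in \S\ref{ section: isomorphism } — is that this set of group elements is disjoint from $\bigcup_{w\in W}(a\text{-interval at }\alpha_y(w))$. Because $y$ is i.i.d., the field restricted to a region disjoint from (and measurable-determined-independently of) the region generating $(\Omega y(w))_{w\in W}$ is independent of the latter, and the new blocks are themselves i.i.d.\ with the same law as $\Omega y(e)$. (One must also handle the $s_k^{-1}$-direction; it is symmetric, using $P_k(y,P_k(y,f))=f$ as in \S\ref{ section: theorem one }.) Since $\Fk$ is exhausted by such saturated subtrees, this proves $\{\Omega y(g)\}_{g\in\Fk}$ is i.i.d., hence $\Omega_*\mu=\kappa_*^\Fk$ with $\kappa_*=\mathrm{law}(\Omega y(e))$.

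The main obstacle I anticipate is the disjointness/independence step under the \emph{conditioned} measure $\mu$ rather than the full product measure: one has to be careful that the single-coordinate conditioning $\{y_1(e)=1\}$ does not secretly correlate the attached blocks with the already-revealed ones. The cleanest way around this is to note that the event $\{y_1(e)=1\}$ is measurable with respect to the very first block $\Omega y(e)$ (it says $e\in V^\phi_y$, which is built into the definition of $Y$ and is reflected in the first entry of $\Omega y(e)$), so conditioning on it is conditioning on an event in the sigma-algebra of $\Omega y(e)$ alone; once the unconditioned family is shown i.i.d., conditioning on such an event leaves all \emph{other} coordinates i.i.d.\ with unchanged law, and re-normalizes only the law of the $e$-block — but by shift-invariance of $\Omega_*\mu$ (Lemma~\ref{ lemma: stable morphism}) all blocks have the same law, forcing the conditioning to be measure-theoretically invisible and giving a genuine product measure $\kappa_*^\Fk$. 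Verifying this last compatibility carefully is where the real work lies; everything else is the routine ``expose one edge at a time'' induction already carried out in \S\ref{ section: isomorphism }.
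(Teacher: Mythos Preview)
Your inductive skeleton matches the paper's: exhaust $\Fk$ by right-connected sets $W$ with $Ws_0=W$, show the base case on the $s_0$-axis, and attach one $s_k^{\pm1}$-edge at a time. The paper streamlines the induction by first proving a single independence statement --- the half-tree $W_k^{\pm}=\{g:|s_k^{\mp1}g|=|g|-1\}$ is independent of the $s_0$-axis --- and then invoking shift-invariance of $\Omega_*\mu$, rather than tracking $\alpha_y$ through a ``determined by'' lemma; but that is a matter of packaging, not substance.

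There is, however, a real gap in your independence step. You write ``Because $y$ is i.i.d., the field restricted to a region disjoint from \ldots\ is independent of the latter.'' But $y$ is \emph{not} i.i.d.\ under $\kappa^{\Ft/\langle b\rangle}\times\kappa^\Ft$: the first coordinate $y_1$ is constant along each coset $g\langle b\rangle$. So disjointness of vertex sets in $\Ft$ does not by itself give independence; two disjoint vertex sets can share a $\langle b\rangle$-coset and hence a common $y_1$-value. When you cross an $s_k$-edge from $\alpha_y(v)$ you pass through $P_k(y,\alpha_y(v))$ and then through $P_k(y,\alpha_y(v))b$, and these two vertices lie in the \emph{same} $\langle b\rangle$-coset --- one on the ``old'' side, one on the ``new'' side. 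The paper confronts this explicitly: it observes that exactly one coset is shared between the old and new regions, that the shared $y_1$-value there is already determined by the old data (it equals $k$), and that therefore the conditional law of the new block, given everything old, is exactly the law of $x\restriction U$ for $x$ with law $\kappa^{\Ft/\langle b\rangle}\times\kappa^\Ft$ conditioned on $x_1(e)=k$. Without this coset-overlap analysis your disjointness argument does not go through.

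Your proposed handling of the conditioning on $Y=\{y_1(e)=1\}$ is also confused: $\Omega$ is only defined on $Y$, so there is no ``unconditioned family'' to first prove i.i.d.\ for. The paper never passes to the unconditioned measure; it works directly with $\mu$ and computes the relevant conditional laws. In fact the conditioning on $\{y_1(e)=1\}$ is the same kind of single-coset constraint as the one just discussed, and is absorbed into the same analysis.
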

 
 \begin{proof}
 For $y\in Y$, let $\cN_y=(V_y,E_y,\cL_y,\cG_y,\rho_y)$ be the rooted network induced by $y$ and $\cS=\{a,b\}$. Define $\cN^\phi_y=(V^\phi_y,E^\phi_y,\cL^\phi_V,\cG^\phi_y,\rho_y)$ as in \S \ref{ section: a stable orbit morphism}. For $g\in \Fk$, define  $\alpha_y(g)$ by   $\alpha_y(g):=A(\cN^\psi_y:\rho_y,g)$. So 
 $$\Omega y(g)=\Big(y \big(\alpha_y(g) \big), \ldots, y \big(\alpha_y(g)a^n \big)\Big)$$
 where $n \ge 0$ is the smallest number such that $y_1 \big(\alpha_y(g)a^{n+1} \big)=1$.  It suffices to show that if $y \in Y$ denotes a random variable with law $\mu$ then $ \big\{(\Omega y)(g) \big\}_{g\in \Fk}$ is a collection of  independent identically distributed (i.i.d.) random variables indexed by $\Fk$. 
 
Fix $k \in K$ and let $W_k^+=\{ g\in \Fk:~ |s_k^{-1}g| =|g|-1\}$ where $|\cdot |$ denotes the word metric. 
 
 Let $\tau(y) \in (K\times K)^\Ft$ be the function $\tau(y)(g):=y \big(P_k(y,e)bg \big)$. By construction, $y\mapsto \big[\Omega y(w)\big]_{w\in W_k^+}$ is  determined by $y\mapsto \big[\tau(y)(u)\big]_{u\in U}$ where $U:=\{g \in \Ft:~ \big|bg \big| = |g| +1 \}$.
 
 We claim that $y\mapsto \big[\tau(y)(u)\big]_{u\in U}$ is independent of $y\mapsto \big[y(a^n):~n\in\Z \big]$. To see this, observe that $y\mapsto P_k(y,e)$ is determined by $ \big[y(a^n):~n\in\Z \big]$. The sets $ \big\{a^n:~n\in\Z \big\}$ and  $\{P_k(y,e)bu:~u\in U\}$ are disjoint. There is a single coset $P_k(y,e)\langle b \rangle$  in the intersection of $ \big\{a^n\langle b \rangle:~n\in\Z \big\}$  and $\{P_k(y,e)bu\langle b \rangle:~u\in U\}$. These  facts imply that  the law of $y\mapsto \big[\tau(y)(u)\big]_{u\in U} $  conditioned on any arbitrary event $E$ in the $\sigma$-algebra induced by $y\mapsto \big[y(a^n):~n\in\Z \big]$ is the same as the law of $x\mapsto [x(u)]_{u\in U}$  where $x \in (K\times K)^\Ft$  is a random variable with law $\kappa^{\Ft/\langle b\rangle} \times \kappa^\Ft$ conditioned on  $x_1(e)=k$. In particular, $y\mapsto \big[\tau(y)(u)\big]_{u\in U} $ is independent of $y\mapsto \big[y(a^n):~n\in\Z \big]$ as claimed.

Since $y\mapsto \big[\Omega y(w)\big]_{w\in W_k^+}$ is  determined by $y\mapsto \big[\tau(y)(u)\big]_{u\in U}$ and $y\mapsto \big[\Omega y(s_0^n)\big]_{n\in \Z}$ is determined by  $y\mapsto \big[y(a^n):~n\in\Z \big]$, it follows that $y\mapsto \big[\Omega y(w)\big]_{w\in W_k^+}$ is independent of $y\mapsto \big[\Omega y(s_0^n)\big]_{n\in \Z}$. Let $W_k^-=\{ g\in \Fk:~ |s_kg| = |g| -1\}$. In a similar manner, it can be shown that $y\mapsto \big[\Omega y(w)\big]_{w\in W_k^-}$ is independent of $y\mapsto\big[\Omega y(s_0^n)\big]_{n\in \Z}$.

 It is an easy exercise to show that the variables $\{\Omega y(s_0^n)\}_{n\in \Z}$ are i.i.d.. Suppose, for induction, that $F \subset \Fk$ is a right-connected set  (as defined in \S \ref{ section: isomorphism }) such that $Fs_0 = F$ and $\{\Omega y(f)\}_{f \in F}$ is an i.i.d. collection. We claim that for any $g\in F$, any $k \in K$ and any $\epsilon \in \{-1,+1\}$, if $G=F \cup gs_k^\epsilon \langle s_0\rangle$ then $\{\Omega y(g)\}_{g \in G}$ is an i.i.d. collection. By induction, this will prove the proposition. 

To prove this claim, we may assume that $gs_k^\epsilon \notin F$ since otherwise $F=G$ and the claim is trivial. So $e \notin (gs_k^\epsilon)^{-1}F$. Since $g\in F$, $s_k^{-\epsilon} \in (gs_k^\epsilon)^{-1}F$. Since $F$ is right-connected, this implies $(gs_k^\epsilon)^{-1}F \subset W_k^{-\epsilon}$. We have already shown that $y\mapsto \big[\Omega y(w):~ w\in W_k^{-\epsilon}\big]$ is independent of $y\mapsto \big[\Omega y(s_0^n)\big]_{n\in \Z}$. Since $\Omega_*(\kappa^{\Ft/\langle b\rangle} \times \kappa^\Ft)$ is shift-invariant this implies $y\mapsto \big[\Omega y(f):~ f\in F\big]$ is independent of $y\mapsto \big[\Omega y(gs_k^\epsilon s_0^n)\big]_{n\in \Z}$. Since both collection of random variables are i.i.d. (by the induction hypothesis), this implies the claim and finishes the proposition.

\end{proof}

Theorem \ref{thm:2} follows immediately from the above and  corollary \ref{ corollary: stable orbit equivalence }.

\end{document}